\newtheorem{theorem}{Theorem}[section]
\newtheorem{lemma}[theorem]{Lemma}
\newtheorem{proposition}[theorem]{Proposition}
\newtheorem{corollary}[theorem]{Corollary}
\def\logd{\log |D_x|}
\def\tv{\tilde v}
\def\tw{\tilde w}
\def\half{ \frac{1}{2}}
\def\D{\partial}
\def\R{{\mathbb R}}
\def\nint{\mathop{\diagup\kern-13.0pt\int}}
\def\Z{{\mathbb Z}}
\def\bas{\begin{align*}}
\def\eas{\end{align*}}
\def\bi{\begin{itemize}}
\def\ei{\end{itemize}}
\def\emph#1{{\it #1}}
\def\eps{{\epsilon}}
\def\dq{{\delta}}
\def\sdq{{|\delta|}}
\def\pax{\mathbf{u}}
\def\rpax{\mathbf{r}}
\DeclareMathOperator{\sgn}{sgn}
\theoremstyle{definition}
\newtheorem{remark}[theorem]{Remark}
\numberwithin{equation}{section}
\begin{document}

\title{Low regularity solutions for the surface quasi-geostrophic front equation}

\author{Albert Ai}
\address{Department of Mathematics, University of Wisconsin, Madison}
\email{aai@math.wisc.edu}

\author{Ovidiu-Neculai Avadanei}
\address{Department of Mathematics, University of California at Berkeley}
\email{ovidiu\_avadanei@berkeley.edu}

\begin{abstract}
In this article we consider the low regularity well-posedness of the surface quasi-geostrophic (SQG) front equation. Recent work on other quasilinear models, including the gravity water waves system and nonlinear waves, have demonstrated that in presence of a null structure, a normal form analysis can substantially improve the low regularity theory. In the current article, we observe a null structure in the context of SQG fronts, and establish improved local and global well-posedness results.
\end{abstract}

\keywords{SQG front equation, low regularity, normal forms, paralinearization, modified energies, frequency envelopes, wave packet testing}
\subjclass[2020]{35Q35, 35B65}

\maketitle
\addtocontents{toc}{\protect\setcounter{tocdepth}{1}}
\tableofcontents

\section{Introduction}

The surface quasi-geostrophic (SQG) equation takes the form 
\begin{equation}\label{rawSQG}
\theta_t +  u \cdot \nabla \theta = 0, \qquad u = (- \Delta )^{-\half} \nabla^\perp \theta
\end{equation}
where $\theta$ is a scalar evolution on $\R^2$, $(-\Delta)^{-\half}$ denotes a fractional Laplacian, and $\nabla^\perp = (-\D_y, \D_x)$. The SQG equation arises from oceanic and atmospheric science as a model for quasi-geostrophic flows confined to a surface. This equation is also of interest due to similarities with the three dimensional incompressible Euler equation. In particular, the question of singularity formation remains open for both problems.

The SQG equation is one member in a family of two-dimensional active scalar equations parameterized by the transport term in \eqref{rawSQG}, with 
\begin{equation}\label{gSQG}
u = (- \Delta )^{-\frac{\alpha}{2}} \nabla^\perp \theta, \qquad \alpha \in (0, 2].
\end{equation}
The case $\alpha = 2$ gives the two dimensional incompressible Euler equation, while the $\alpha = 1$ case gives the SQG equation \eqref{rawSQG} above.

\

Front solutions to \eqref{rawSQG} refer to piecewise constant solutions taking the form
\[
\theta(t, x, y) = \begin{cases}
  \theta_+  \qquad \text{if } y > \varphi(t, x), \\
  \theta_-  \qquad \text{if } y < \varphi(t, x),
\end{cases}
\]
where the front is modeled by the graph $y = \varphi(t, x)$ with $x \in \R$. Front solutions are closely related to patch solutions
\[
\theta(t, x, y) = \begin{cases}
  \theta_+  \qquad \text{if } (x, y) \in \Omega(t), \\
  \theta_-  \qquad \text{if } (x, y) \notin \Omega(t),
\end{cases}
\]
where $\Omega$ is a bounded, simply connected domain. For instance, when $\alpha \in (0, 1)$, the derivation and analysis of contour dynamics equations do not differ substantially between the case of patches and the case of fronts. Global well-posedness for the front equation for small and localized data was established by Córdoba-Gómez-Serrano-Ionescu in \cite{GlobalPatch}. 

However, when $\alpha \in [1, 2]$, the derivation of contour dynamics equations for fronts has additional complexities relative to the case of patches, arising from the slow decay of Green's functions. The derivation in this range was provided by Hunter-Shu \cite{HSderivation} via a regularization procedure, and again by Hunter-Shu-Zhang in \cite{HSZderivation}. In the SQG case $\alpha = 1$, the evolution equation for the front $\varphi$ takes the form 
\begin{equation}\label{SQG}
\begin{aligned}
(\partial_t - 2\log|D_x|\D_x)\varphi(t,x) &= Q(\varphi , \D_x\varphi)(t, x), \\
\varphi(0,x)&=\varphi_0(x),
\end{aligned}
\end{equation}
where $\varphi$ is a real-valued function $\varphi : [0, \infty) \times \R \rightarrow \R$ and
\begin{equation}\label{A-op}
Q(f,g)(x) = \int \left(\frac{1}{|y|} - \frac{1}{\sqrt{y^2+(f(x+y)-f(x))^2}}\right)\cdot (g(x + y) - g(x)) \,dy.
\end{equation}
This can be rewritten using difference quotients as
\begin{equation}\label{A-op2}
Q(f,g)(x) =\int F(\dq^yf) \cdot \sdq^yg\,dy,
\end{equation}
where
\[
\displaystyle F(s)=1-\frac{1}{\sqrt{1+s^2}},\qquad \displaystyle \dq^yf(x)=\frac{f(x+y)-f(x)}{y}, \qquad \displaystyle \sdq^yg(x)=\frac{g(x+y)-g(x)}{|y|}.
\]

The equation \eqref{SQG} is invariant under the transformation
\begin{align*}
    t\rightarrow\kappa t,\qquad x\rightarrow\kappa(x+2\log|\kappa|t),\qquad\varphi\rightarrow\kappa\varphi,
\end{align*}
which implies that $\dot{H}^{\frac{3}{2}}(\mathbb{R})$ is the corresponding critical Sobolev space.

\

In the case of SQG patches, Gancedo-Nguyen-Patel proved in \cite{GNP} that under a suitable parametrization, the contour dynamics evolution is locally well-posed in $H^s(\mathbb{T})$ when $s>2$. Local well-posedness for the generalized SQG family in the case $\alpha\in(0,2)$ and $\alpha \neq 1$ was also considered by Gancedo-Patel in \cite{GP}, establishing in particular local well-posedness in $H^2$ for $\alpha \in (0, 1)$ and in $H^3$ for $\alpha \in (1, 2)$. For a more recent result on enhanced lifespan for $\alpha$-patches, see Berti-Cuccagna-Gancedo-Scrobogna \cite{BCGS}.

Concerning ill-posedness for patches, Kiselev-Luo \cite{KL} proved results in Sobolev spaces with exponents $p \neq 2$, as well as in H\"older spaces. Further, Zlato\v s \cite{Zlatos} proved finite time blow up for patch solutions (both bounded and unbounded) for suitable initial data and $\alpha\in(0,1/4]$, provided that local well-posedness for \eqref{rawSQG} is known in the Sobolev space $H^3(\mathbb{R}^2)$ (corresponding to solutions in $H^2(\mathbb{R})$ in the contour dynamics formulation from \cite{GP}).

In the current article we are concerned with SQG fronts. Work in this area began with Hunter-Shu-Zhang, who studied the local well-posedness for a cubic approximation of \eqref{SQG} in \cite{HSZapprox}, and subsequently in \cite{HSZglobal} established local well-posedness for the full equation \eqref{SQG} with initial data in $H^s$, $s \geq 5$, along with global well-posedness for small, localized, and essentially smooth ($s \geq 1200$) initial data. These results were extended to the range $\alpha \in (1, 2)$ for the generalized SQG family in \cite{HSZfamily}, while also improving the required regularity to $s > \frac72 + \frac{3\alpha}{2}$. However, these well-posedness results require a small data assumption to ensure the coercivity of the modified energies used in the energy estimates, along with a convergence condition on an expansion of the nonlinearity $Q(\varphi,\varphi_x)$ appearing in \eqref{SQG}.

In \cite{SQGzero}, the authors lowered the regularity thresholds for both the local and global theory, proving that the problem is locally well-posed in $H^s$ when $s>\frac{5}{2}$, which corresponds to the classical energy threshold of Hughes-Kato-Marsden \cite{HKM} at one derivative above scaling; and globally well-posed for small and localized initial data in $H^s$ when $s>4$.

\

In the present article, our objective is to revisit and streamline the analysis of \eqref{SQG}, while improving the established well-posedness results. Our contributions include:
\begin{itemize}
\item establishing the local well-posedness in a significantly lower regularity setting at $\frac{1}{2}+\epsilon$ derivatives above scaling, by observing a null structure for the equation and carrying out an associated normal form analysis, and
\item establishing the global well-posedness in a low regularity setting, by applying the wave packet testing method of Ifrim-Tataru (see for instance \cite{ITschrodinger, ITpax}).
\end{itemize}
We anticipate that our streamlined analysis will open the way to substantial simplifications and improvements in the analysis of related equations, including the generalized SQG family \eqref{gSQG}.

\subsection{Main results}

We recall for the purpose of comparison the energy estimate in \cite{SQGzero} for \eqref{SQG},
\begin{equation}\label{energy0}
\frac{d}{dt}E^{(s)}(\varphi)\lesssim_A AB_0 \cdot E^{(s)}(\varphi),
\end{equation}
where
\[
A = \|\D_x \varphi\|_{L^\infty}, \qquad B_0 = \|\D_x \varphi\|_{C^{1, \delta}}.
\]
We remark that this energy estimate is cubic, which is consistent with the fact that the nonlinearity $Q$ of \eqref{SQG} is fully nonlinear, and cubic at leading order. 

However, from the perspective of the balance of derivatives, \eqref{energy0} manifests quadratically, in that a full extra derivative is absorbed solely by the $B_0$ control norm. This in turn can be understood by viewing the cubic nonlinearity $Q$ as a quadratic nonlinearity with a low frequency coefficient, and motivates our choice of notation $Q = Q(f, g)$.

A key observation in the current article is that the SQG equation \eqref{SQG} satisfies a resonance structure, in the sense that, using the quadratic perspective, the nonlinearity can be viewed as 
\begin{equation}
    Q(f,g)\approx\Omega(\partial_x^{-1}F(f_x),g),
\end{equation}
where the symbol of $\Omega$ is
\begin{equation*}
\Omega(\xi_1, \xi_2)=\omega(\xi_1) + \omega(\xi_2) - \omega(\xi_1 + \xi_2), \qquad \omega(\xi) = 2i \xi \log|\xi|.
\end{equation*}
For more details, see the discussion in Section \ref{s:equations}.

As a result, we have access to normal form methods which open the way to a refined energy estimate with a better balance of derivatives,
\[
\frac{d}{dt}E^{(s)}(\varphi)\lesssim_A B^2 \cdot E^{(s)}(\varphi),
\]
where here we have balanced the control norm
\[
 B := \|\D_x \varphi\|_{C^{\half +}}.
\]

The gain obtained from the rebalanced energy estimate, along with its counterpart for the linearized equation, leads to our main local well-posedness result:
\begin{theorem}\label{t:lwp}
Equation \eqref{SQG} is locally well-posed for initial data in $\dot H^{s_0} \cap \dot H^s$ with $s > 2$ and $s_0<\frac32$.  Precisely, for every $R > 0$, there exists $T=T(R)>0$ such that for any $\varphi_0\in \dot H^{s_0} \cap \dot H^s$ with $\|\varphi_0\|_{\dot H^{s_0} \cap \dot H^s} < R$, the Cauchy problem \eqref{SQG} has a unique solution $\varphi \in C([0, T], \dot H^{s_0} \cap \dot H^s)$. Moreover,  the solution map $\varphi_0 \mapsto \varphi$ from $\dot H^{s_0} \cap \dot H^s$ to $C([0, T], \dot H^{s_0} \cap \dot H^s)$ is continuous.
\end{theorem}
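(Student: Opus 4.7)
The plan is to follow a standard paralinearization plus modified-energy plus frequency-envelope scheme for quasilinear dispersive equations, with the null structure of \eqref{SQG} providing the decisive input that lowers the regularity threshold to $s>2$. First I would paralinearize \eqref{SQG} by decomposing
\[
Q(\varphi,\partial_x\varphi) = T_v \partial_x \varphi + \mathcal{R}(\varphi,\partial_x\varphi),
\]
where $T_v$ is a paraproduct whose low-frequency symbol $v$ is built from $\varphi$ (morally $v\sim F'(\delta^y\varphi)$ averaged), and $\mathcal{R}$ collects the balanced high-high and low-high contributions. This produces a paradifferential transport equation
\[
\bigl(\partial_t - 2\log|D_x|\partial_x + T_v\partial_x\bigr)\varphi = \mathcal{R}(\varphi,\partial_x\varphi),
\]
with all control norms taken in $A = \|\partial_x\varphi\|_{L^\infty}$ and the sharper $B = \|\partial_x\varphi\|_{C^{1/2+}}$; both are finite and locally Lipschitz on $\dot H^{s_0}\cap\dot H^s$ by Sobolev embedding since $s>2$.

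Second, I would perform the normal-form construction on the $\dot H^s$ side. The quadratic model of $\mathcal{R}$, read through the identification $Q(f,g)\approx \Omega(\partial_x^{-1}F(f_x),g)$, has a symbol whose ratio with $\Omega(\xi_1,\xi_2)$ is bounded, because the dispersion relation $\omega(\xi)=2i\xi\log|\xi|$ is non-resonant on the set $\xi_1\xi_2(\xi_1+\xi_2)\neq 0$. Rather than changing the unknown, which would cost derivatives in the leading paraproduct, I would design a modified energy
\[
E^{(s)}(\varphi) = \|\varphi\|_{\dot H^s}^2 + \mathcal{C}^{(s)}(\varphi,\varphi,\varphi),
\]
where the cubic correction $\mathcal{C}^{(s)}$ is tailored so that its time derivative along the flow cancels the quadratic contributions to $\tfrac{d}{dt}\|\varphi\|_{\dot H^s}^2$. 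After verifying that $\mathcal{C}^{(s)}$ is absorbed into $\|\varphi\|_{\dot H^s}^2$ for $A$ bounded, one obtains the advertised refined bound $\tfrac{d}{dt}E^{(s)}(\varphi)\lesssim_A B^2\,E^{(s)}(\varphi)$.

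Third, I would repeat the paralinearization and the modified-energy construction for the linearized equation, now for the linearized variable $\delta\varphi$, producing a parallel bound
\[
\tfrac{d}{dt} E^{(s_0)}_\varphi(\delta\varphi) \lesssim_A B^2\, E^{(s_0)}_\varphi(\delta\varphi)
\]
at the subcritical base regularity $s_0<3/2$. Together the two estimates drive the standard scheme: regularize the initial data by frequency truncation at $2^n$, solve to produce classical solutions $\varphi^{(n)}$ with uniform bounds on a common interval $[0,T(R)]$ given by the high-norm estimate, use the linearized bound to control the differences $\varphi^{(n+1)}-\varphi^{(n)}$ in $\dot H^{s_0}$, and pass to the limit to obtain the solution $\varphi\in C([0,T],\dot H^{s_0}\cap\dot H^s)$. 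Uniqueness and continuity of the data-to-solution map follow from the linearized estimate via a Bona–Smith type argument in which $\dot H^{s_0}$ plays the role of the weak norm and $\dot H^s$ the role of the strong norm.

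The main obstacle will be the linearized estimate at $s_0<3/2$. Since this regularity lies strictly below scaling, there is no surplus of derivatives available to absorb unfavorable commutators into the Sobolev norm, and the modified-energy correction has to be shown to be bounded and coercive on $\dot H^{s_0}$ using only the control $B$, not $B_0$. This forces one to verify the null/resonance structure not only for the equation of $\varphi$ itself but in the richer bilinear setting $(\varphi,\delta\varphi)$, and to confirm that the paralinearization of \eqref{SQG} produces only balanced or low-high interactions to which the normal form multiplier applies. Once this is carried out, the replacement of $B_0$ by the scale-invariant $B$ as the control norm is precisely what allows the joint $\dot H^{s_0}\cap\dot H^s$ bounds to close for any $s>2$ and $s_0<3/2$.
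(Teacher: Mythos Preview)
Your proposal is correct and follows the same overall strategy as the paper: paralinearize, exploit the null structure $Q(f,g)\approx\Omega(\partial_x^{-1}F(f_x),g)$ via normal forms to obtain the balanced estimate $\tfrac{d}{dt}E^{(s)}\lesssim_A B^2 E^{(s)}$, derive a parallel estimate for the linearized flow, and close with a frequency-envelope regularization scheme.

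There is one packaging difference worth flagging. You propose to absorb the entire normal form into a cubic modified-energy correction $\mathcal{C}^{(s)}$, explicitly avoiding a change of unknown. The paper instead splits the work: it first performs a bounded paradifferential normal form transformation of the unknown (a correction of the form $\tilde v = v - \partial_x T_{T_J v}\psi - \partial_x\Pi(T_Jv,\psi)$ with the Jacobian weight $J=(1-\partial_x\psi)^{-1}$) to reduce to the paradifferential flow with balanced cubic source, then applies a quasilinear modified energy $E(v)=\int v\,T_{1-\partial_x\psi}v\,dx$ to that flow, and finally uses a separate conjugation $T_{J^{-s}}|D_x|^s v$ plus a further normal form to handle the $|D_x|^s$ commutators in the higher-order estimate. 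The paper's modular decomposition isolates where each cancellation occurs; your all-in-one modified energy is a legitimate alternative, but the correction $\mathcal{C}^{(s)}$ would have to encode all three of these mechanisms simultaneously, which makes the verification of coercivity and the balanced-error bookkeeping more intricate. Also, the paper carries out the linearized estimate specifically at $L^2$ (not a general $\dot H^{s_0}$), then interpolates with the high-norm bounds to recover convergence in $\dot H^{s_0}\cap\dot H^s$; this is a minor simplification relative to your plan.
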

\begin{remark}
    In order to keep the proofs simpler, we assume that the parameter $A$ is small. This assumption can be removed with a more careful definition of the paraproduct, at the expense of having to deal with more technical details.
\end{remark}

Normal forms were first introduced by Shatah in \cite{Shatah} to study the long-time behavior of solutions to dispersive equations. However, in the quasilinear context, the normal form transformation is not readily applicable, because the resulting correction will not be bounded. Several approaches have been introduced to address this, and in the present article we primarily rely on two. First is the idea of carrying out the normal form analysis in a paradifferential fashion, which was first used by Alazard-Delort \cite{Alazard-Delort} in a paradiagonalization argument used to obtain Sobolev estimates for the solutions of the water waves equations in the Zakharov formulation. This approach was also used by Ifrim-Tataru \cite{Benjamin} to obtain a new proof of the $L^2$ global well-posedness for Benjamin-Ono equation, proved previously by Ionescu-Kenig \cite{Ionescu-Kenig}. 

Second is the use of modified energies, in lieu of the direct normal form transform at the level of the equation. This approach was first introduced by Hunter-Ifrim-Tataru-Wong \cite{Hunter-Ifrim-Tataru-Wong} to study long time solutions of the Burgers-Hilbert equation.

The combination of these approaches to address the low regularity theory for quasilinear models was first introduced by the first author with Ifrim-Tataru in \cite{Ai-Ifrim-Tataru} for the gravity water waves system, through the proof of \emph{balanced energy estimates}. Balanced energy estimates were subsequently further combined with Strichartz estimates in the context of the low regularity theory for the time-like minimal surface problem in the Minkowski space \cite{Minimal-surface}.  

\

We remark that our local well-posedness threshold of $s > 2$ coincides with the result of Gancedo-Nguyen-Patel \cite{GNP} for patches. However, the use of the null structure in our current work yields stronger energy estimates in the sense that our control norms $A$ and $B$ consist of only pointwise norms rather than $L^2$-based norms. This allows for further applications, including the analysis of long time behavior, which we discuss next.

We will consider global well-posedness for small and localized data. To describe localized solutions, we define the operator 
\[
L = x+2t+2t\log|D_x|,
\]
which commutes with the linear flow $\partial_t-2\log|D_x|\D_x$, and at time $t = 0$ is simply multiplication by $x$. Then we define the time-dependent weighted energy space
\[
\|\varphi\|_X := \|\varphi\|_{\dot{H}^{s_0}\cap\dot{H}^s} + \|L\partial_x \varphi\|_{L^2},
\]
where $s>3$ and $s_0<1$. To track the dispersive decay of solutions, we define the pointwise control norm 
\[
\|\varphi\|_Y := \||D_x|^{1-\delta}\langle D_x\rangle^{\frac{1}{2}+2\delta}\varphi\|_{L_x^{\infty}}.
\]

\begin{theorem}\label{t:gwp}
Consider data $\varphi_0$ with
\[
\|\varphi_0\|_X \lesssim \eps \ll 1.
\]
Then the solution $\varphi$ to \eqref{SQG} with initial data $\varphi_0$ exists globally in time, with energy bounds 
\[
\|\varphi(t)\|_{X} \lesssim \eps t^{C\eps^2}
\]
and pointwise bounds 
\[
\|\varphi(t)\|_Y \lesssim \eps \langle t\rangle^{-\half}.
\]
\end{theorem}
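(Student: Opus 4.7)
\smallskip

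The proof proceeds via a continuity/bootstrap argument, combining the balanced energy estimate built in the local theory with pointwise decay obtained by the Ifrim--Tataru wave packet testing method. The plan is to assume, on a time interval $[0,T]$, the bootstrap hypotheses
\[
\|\varphi(t)\|_X \leq C_1 \eps \,t^{C_2\eps^2}, \qquad \|\varphi(t)\|_Y \leq C_1 \eps \langle t\rangle^{-1/2},
\]
for suitable constants $C_1, C_2$, and to improve both with the same constant replaced by $C_1/2$. Once this is done, the global bounds follow in the standard way.

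\smallskip

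The energy side splits into two pieces. First, for the $\dot H^{s_0}\cap \dot H^s$ component, I would apply the refined balanced estimate $\frac{d}{dt}E^{(s)}(\varphi)\lesssim_A B^2 E^{(s)}(\varphi)$ established via the normal form/modified energy analysis in the proof of Theorem \ref{t:lwp}. The pointwise bootstrap on $\|\varphi\|_Y$ controls $B = \|\partial_x\varphi\|_{C^{1/2+}}$ via Bernstein and interpolation, giving $B^2 \lesssim \eps^2 \langle t\rangle^{-1}$, which is exactly borderline integrable and produces the $t^{C\eps^2}$ growth after Gr\"onwall. Second, for the weighted component $L\partial_x\varphi$, I would commute $L=x+2t+2t\log|D_x|$ through the equation. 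Since $L$ commutes with the linear flow $\D_t-2\log|D_x|\D_x$, the commutator falls entirely on the nonlinearity $Q(\varphi,\D_x\varphi)$. Treating $L\partial_x\varphi$ as a new variable satisfying a linearized-type equation with source equal to $[L\partial_x,Q]$, I would run the same modified-energy/normal-form machinery used for the linearization in Theorem \ref{t:lwp}; the null structure ensures the commutator source term enjoys the same quadratic-in-$B$ balance, giving an estimate of the form $\frac{d}{dt}\|L\D_x\varphi\|_{L^2}^2 \lesssim B^2 (\|L\D_x\varphi\|_{L^2}^2 + E^{(s)}(\varphi))$.

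\smallskip

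The pointwise bound is the delicate step and is where the dispersive decay is actually extracted. Following the wave packet testing method, I would select, for each velocity $v$ lying in the range of $\omega'(\xi)=2\log|\xi|+2$, a wave packet $\Psi_v(t,x)$ concentrated on the ray $x=vt$ at frequency $\xi_v = \pm e^{(v-2)/2}$ and with spatial scale $\sqrt{t}$ (matching the curvature of $\omega$). The asymptotic profile is defined as the pairing
\[
\gamma(t,v) := \langle \varphi(t), \Psi_v(t)\rangle,
\]
and the strategy is to show that $\gamma$ satisfies an asymptotic ODE of the form
\[
\frac{d}{dt}\gamma(t,v) = \frac{i c(v)}{t} |\gamma(t,v)|^2 \gamma(t,v) + \text{error},
\]
where the error is integrable in $t$. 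The phase correction coming from the cubic term is tracked explicitly, yielding $|\gamma(t,v)| \lesssim \eps$ and an inversion formula $\varphi(t,x)\approx t^{-1/2}\gamma(t,x/t+\cdots)\cdot(\text{oscillation})$ which delivers the $t^{-1/2}$ decay. The uniform bound on $\gamma$ together with the weighted $L^2$ control on $L\partial_x\varphi$ produces the claimed pointwise bound on $\|\varphi\|_Y$.

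\smallskip

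The main obstacle is the wave packet analysis: the dispersion relation $\omega(\xi)=2i\xi\log|\xi|$ is only logarithmically dispersive, so the standard stationary phase schemes from Schr\"odinger-type settings must be adapted, and the construction of $\Psi_v$ and the computation of $\frac{d}{dt}\gamma$ require care with the remainder terms. Crucially, the same null/resonance structure $Q(f,g)\approx \Omega(\D_x^{-1}F(f_x),g)$ that powered the energy estimate must again be used to kill the non-integrable contributions to $\frac{d}{dt}\gamma$, leaving only the resonant cubic term that can be absorbed into a phase correction. Verifying that nonresonant interactions produce source terms integrable in $t$ down to the regularity level permitted by $\|\varphi\|_X$ is the heart of the argument; once this is in place, closing the bootstrap is a direct combination of the Gr\"onwall-type energy growth and the pointwise bound extracted from $\gamma$.
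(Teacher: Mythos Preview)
Your overall scheme matches the paper's, but two concrete points need correction.

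For the weighted energy $\|L\partial_x\varphi\|_{L^2}$, simply commuting $L\partial_x$ through and treating $[L\partial_x,Q]$ as a balanced source does not close. The operator $L$ contains the piece $2t\log|D_x|$, so the commutator produces a term $2t\,\partial_x[\log|D_x|,Q(\varphi,\cdot)]\varphi_x$ with an explicit factor of $t$; even after using the null structure this contribution is only $O(\eps^3)$ uniformly in time, and Gr\"onwall then yields linear growth in $t$ rather than $t^{C\eps^2}$. The paper instead exploits the scaling symmetry of \eqref{SQG}: the corrected variable $u = L\partial_x\varphi + t\,Q(\varphi,\varphi_x)$ (equivalently $u+\varphi = (t\partial_t + (x+2t)\partial_x)\varphi + \varphi$, the generator of the scaling) solves the linearized equation with source $\int F'(\dq^y\varphi)\,\dq^y\varphi\,\sdq^y\varphi_x\,dy$, which is genuinely balanced and carries no $t$ factor. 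This correction is what makes the $X$-estimate close.

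In the wave packet step, the paper does \emph{not} invoke the null/resonance structure to kill non-integrable contributions to $\dot\gamma$. Instead it splits the cubic form into a frequency-balanced part $Q^{bal}$ (all inputs and output at comparable dyadic frequency $\lambda$) and an unbalanced part $Q^{unbal}$. The unbalanced part gains an extra $t^{-1}$ from \emph{elliptic} estimates: when the input frequencies are separated from $\lambda$, at least one factor is spatially supported away from the region $tJ_\lambda$, and the localized dispersive estimate $\|(1-\chi_\lambda)\varphi_\lambda\|_{L^\infty}\lesssim t^{-1}\lambda^{-1/2}\|\varphi\|_X$ provides the needed decay. The balanced part is then matched to the resonant cubic by a semiclassical expansion of $Q$ on the phase $e^{it\phi(x/t)}$. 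Also, the dispersion here is not ``logarithmic'': $a''(\xi)=-2/\xi$, so the wave packet spatial scale is $(t/\lambda)^{1/2}$, and correspondingly $\xi_v=-e^{-1-v/2}$, not $\pm e^{(v-2)/2}$.
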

 Further, the solution  $\varphi$ exhibits a modified scattering behavior, with an asymptotic profile $W\in H^{1-C_1\epsilon^2}(\R)$, in a sense that will be made precise in Section~\ref{s:scattering}.

\

\subsection{Outline of the paper}

Our paper is organized as follows. In Section~\ref{s:notation}, we establish notation and preliminaries used through the rest of the paper, including estimates involving the paradifferential calculus and difference quotients.

In Section~\ref{s:equations}, we introduce the null structure of equation \eqref{SQG} and its linearization,
\begin{equation}\label{linearized-eqn}
    \partial_tv- 2\log|D_x|\partial_xv = \partial_xQ(\varphi,v).
\end{equation}
We also introduce the paradifferential flow associated to \eqref{linearized-eqn}, which will play a central role in the subsequent analysis.

In Section~\ref{s:reduction}, we reduce the energy estimates and well-posedness of the linearized equation \eqref{linearized-eqn} to that of the inhomogeneous paradifferential flow. The primary difficulty is ensuring that the paradifferential errors satisfy balanced cubic estimates. In order to achieve this, we carry out a paradifferential normal form analysis to remove the unbalanced components of the errors.

In Section~\ref{s:energy estimates}, we establish energy estimates for the paradifferential flow. Here, we construct a quasilinear modified energy, which reproduces the one introduced by Hunter-Shu-Zhang in \cite{HSZglobal}. However, in the current article, we carefully observe cancellations which ensure that our estimates are balanced.

In Section~\ref{s:higher order energy estimates}, we establish higher order energy estimates. Extra care must be taken because the commutators are quadratic rather than cubic, and thus not perturbative. In order to eliminate them, we use an exponential Jacobian conjugation combined with a normal form correction.

In Section~\ref{s:lwp}, we prove Theorem~\ref{t:lwp}, the local well-posedness result for \eqref{SQG}. We use the method of frequency envelopes to construct rough solutions as the unique limit of smooth solutions. This method was introduced by Tao in \cite{25} to better track the evolution of energy distribution between dyadic frequencies. A systematic presentation of the use of frequency envelopes in the study of local well-posedness theory for quasilinear problems can be found in the expository paper \cite{ITprimer}. 

In Section~\ref{s:gwp} we use the wave packet testing method of Ifrim-Tataru to prove the global-wellposedness part of Theorem \ref{t:gwp}, along with the dispersive bounds for the resulting solution. This method was systematically presented in \cite{ITpax}. Finally, in Section~\ref{s:scattering} we discuss the modified scattering behavior of the global solutions constructed in Section \ref{s:gwp}.

\subsection{Acknowledgements}

The first author was supported by the NSF grant DMS-2220519 and the RTG in Analysis and Partial Differential equations grant DMS-2037851. The second author was supported by the NSF
grant DMS-2054975, as well as by the Simons Foundation. 

The authors were also supported by the NSF under Grant No. DMS-1928930 while in residence at the Simons Laufer Mathematical Sciences Institute (formerly MSRI) in Berkeley, California, during the summer of 2023.

The authors would like to thank Mihaela Ifrim and Daniel Tataru for many helpful discussions.

\section{Notations and classical estimates}\label{s:notation}

In this section we discuss some notation and classical estimates that we use throughout the article. These include estimates involving the paradifferential calculus, and difference quotients.

\subsection{Paradifferential operators and paraproducts}

Let $\chi$ be an even smooth function such that $\chi=1$ on $[-\frac{1}{20}, \frac{1}{20}]$ and $\chi = 0$ outside $[-\frac{1}{10}, \frac{1}{10}]$, and define
\[
\tilde{\chi}(\theta_1,\theta_2) = \chi\left(\frac{|\theta_1|^2}{M^2+|\theta_2|^2}\right).
\]
Given a symbol $a(x,\eta)$,  we use the above cutoff symbol $\tilde \chi$ to define an $M$ dependent paradifferential quantization of $a$ by (see also \cite{ABZgravity}) 
\begin{align*}
    \widehat{T_au}(\xi)=(2\pi)^{-1}\int \hat{P}_{>M}(\xi)\tilde \chi\left(\xi - \eta, \xi + \eta \right) \hat{a}(\xi-\eta,\eta)\hat{P}_{>M}(\eta)\hat{u}(\eta)\,d\eta,
\end{align*}
where the Fourier transform of the symbol $a=a(x,\eta)$ is taken with respect to the first argument.

This quantization was employed in \cite{SQGzero}, where the parameter $M$ was introduced to ensure the coercivity of the modified quasilinear energy without relying on a small data assumption. We recall in particular that in the case of a paraproduct, where $a = a(x)$ is real-valued, $T_a$ is self-adjoint.

\

The following commutator-type estimates are exact reproductions of statements from Lemmas 2.4 and 2.6 in Section 2 of \cite{Ai-Ifrim-Tataru}, respectively:

\begin{lemma}[Para-commutators]\label{l:para-com}
 Assume that $\gamma_1, \gamma_2 < 1$. Then we have
\begin{equation}\label{para-com}
\| T_f T_g - T_g T_f \|_{\dot H^{s} \to \dot H^{s+\gamma_1+\gamma_2}} \lesssim 
\||D|^{\gamma_1}f \|_{BMO}\||D|^{\gamma_2}g\|_{BMO},
\end{equation}
\begin{equation}
\| T_f T_g - T_g T_f \|_{\dot B^{s}_{\infty,\infty} \to \dot H^{s+\gamma_1+\gamma_2}} \lesssim 
\||D|^{\gamma_1}f \|_{L^2}\||D|^{\gamma_2}g\|_{BMO}.
\end{equation}
A bound similar to \eqref{para-com} holds in the Besov scale of spaces, namely 
from $\dot B^{s}_{p, q}$ to $\dot B^{s+\gamma_1+\gamma_2}_{p, q}$
for real $s$ and $1\leq p,q \leq \infty$.
\end{lemma}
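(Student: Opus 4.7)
The plan is to use a dyadic Littlewood--Paley decomposition and exploit the symbolic calculus for paraproducts, which forces the leading-order contributions of $T_f T_g$ and $T_g T_f$ to cancel. Writing $f = \sum_j f_j$ and $g = \sum_k g_k$ with $f_j = P_j f$, I decompose
$$T_f T_g - T_g T_f = \sum_{j, k} [T_{f_j}, T_{g_k}].$$
Each piece $[T_{f_j}, T_{g_k}]$, applied to a Littlewood--Paley block $u_\ell = P_\ell u$, vanishes unless $\ell > \max(j, k) + C$ and produces output at frequency $\sim \ell$. I split the double sum into a near-diagonal region $|j - k| \leq C_0$ and an off-diagonal region.

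For the near-diagonal pieces, I bound each of $T_{f_j} T_{g_k}$ and $T_{g_k} T_{f_j}$ on $L^2$ by $\|f_j\|_{L^\infty} \|g_k\|_{L^\infty}$ using the standard paraproduct estimate. For the off-diagonal pieces, say $j + C_0 < k$, I apply Bony's paradifferential calculus, which yields
$$T_{f_j} T_{g_k} = T_{f_j g_k} + E_1, \qquad T_{g_k} T_{f_j} = T_{f_j g_k} + E_2,$$
cancelling the leading symbols in the commutator and leaving error terms with an operator-norm gain of $2^{j-k}$ coming from Taylor-expanding the low-frequency symbol $f_j$ against the scale $2^{-k}$ of the high-frequency operator. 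This ensures off-diagonal summability.

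To convert these $L^2 \to L^2$ operator bounds into the claimed $\dot H^s \to \dot H^{s + \gamma_1 + \gamma_2}$ estimate, I invoke the Littlewood--Paley characterization
$$\|P_j f\|_{L^\infty} \lesssim 2^{-j\gamma_1} \||D|^{\gamma_1} f\|_{BMO},$$
which follows from the zero-mean property of Littlewood--Paley kernels combined with the John--Nirenberg inequality, together with the corresponding bound for $g_k$. Summing the dyadic bounds against $2^{\ell(s + \gamma_1 + \gamma_2)}$ and using orthogonality of the blocks, the hypothesis $\gamma_1, \gamma_2 < 1$ produces a convergent geometric series in $\ell - j$ and $\ell - k$ near the diagonal. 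The second inequality of the lemma is proved analogously, replacing the low-frequency estimate by $\|P_j f\|_{L^2} \lesssim 2^{-j\gamma_1} \||D|^{\gamma_1} f\|_{L^2}$ via Plancherel and pairing against the $L^\infty$-controlled Besov input. The Besov-scale extension of the first inequality is obtained by replacing $\ell^2$ summation of Littlewood--Paley blocks with $\ell^q$ summation in the natural way.

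The main obstacle is the diagonal region $j \sim k \sim \ell$, where the pointwise commutator does not benefit from symbolic cancellation. The loss must be absorbed through the geometric summation, which is precisely why the hypothesis requires the strict inequality $\gamma_1, \gamma_2 < 1$.
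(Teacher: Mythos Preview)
The paper does not supply a proof of this lemma; it is quoted verbatim from Lemmas~2.4 and~2.6 of \cite{Ai-Ifrim-Tataru}. I therefore assess your sketch on its own.

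The overall strategy is standard and correct in outline, but the execution has a genuine gap. In your near-diagonal region $|j-k|\le C_0$ you bound $T_{f_j}T_{g_k}$ and $T_{g_k}T_{f_j}$ \emph{separately}, obtaining only $\|f_j\|_{L^\infty}\|g_k\|_{L^\infty}\lesssim 2^{-j\gamma_1-k\gamma_2}$ with no commutator gain. Summed over $j\sim k<\ell$ this yields $\sum_{j<\ell}2^{-j(\gamma_1+\gamma_2)}$, which is $O(1)$ rather than the required $O(2^{-\ell(\gamma_1+\gamma_2)})$ whenever $\gamma_1+\gamma_2>0$---precisely the regime in which the lemma is used throughout the paper. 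Your off-diagonal gain $2^{j-k}$ does not repair this: it is decay in $|j-k|$, not in $\ell-\max(j,k)$, and after summing in $j$ you return to the same non-summable expression in $k$. The closing remark about the ``diagonal region $j\sim k\sim\ell$'' is also off: that region is empty, since the paraproduct constraint forces $j,k<\ell-C$.

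The actual mechanism is different. When both $j$ and $k$ lie well below $\ell$, the cutoff $\tilde\chi$ in the paraproduct is identically $1$, so $T_{f_j}u_\ell=f_ju_\ell$ and $T_{g_k}u_\ell=g_ku_\ell$ are exact multiplications and $[T_{f_j},T_{g_k}]u_\ell=0$. Nonzero contributions arise only when $\max(j,k)$ sits in the transition band $\ell-O(1)$; there, commuting the remaining multiplication past the nontrivial cutoff produces a gain of $2^{\min(j,k)-\ell}$, not $2^{j-k}$. Summing $2^{k-\ell}\,2^{-j\gamma_1-k\gamma_2}$ over $k<\ell$ with $j\sim\ell$ converges exactly when $\gamma_2<1$ and gives $2^{-\ell(\gamma_1+\gamma_2)}$; the symmetric case uses $\gamma_1<1$. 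This is where the strict inequalities in the hypothesis genuinely enter.
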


The next paraproduct estimate, see Lemma 2.5 in \cite{Ai-Ifrim-Tataru}, directly relates multiplication and paramultiplication:

\begin{lemma}[Para-products]\label{l:para-prod}
Assume that $\gamma_1, \gamma_2 < 1$, $\gamma_1+\gamma_2 \geq 0$. Then
\begin{equation}
\| T_f T_g - T_{fg} \|_{\dot H^{s} \to \dot H^{s+\gamma_1+\gamma_2}} \lesssim 
\||D|^{\gamma_1}f \|_{BMO}\||D|^{\gamma_2}g\|_{BMO}.
\end{equation}
A similar bound holds in the Besov scale of spaces, namely 
from $\dot B^{s}_{p, q}$ to $\dot B^{s+\gamma_1+\gamma_2}_{p, q}$
for real $s$ and $1\leq p,q \leq \infty$.
\end{lemma}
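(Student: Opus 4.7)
The strategy is to reduce the claim to a bound on the high-high resonant component of the paraproduct of $f$ and $g$, via Bony's decomposition.

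First, I would expand both $T_f T_g u$ and $T_{fg} u$ on the Fourier side using the $M$-dependent quantization. A direct computation represents the former as a trilinear form in $(f,g,u)$ with symbol $\tilde\chi(\xi-\eta,\xi+\eta)\,\tilde\chi(\eta-\zeta,\eta+\zeta)$ acting on the frequencies $\xi-\eta$ of $f$, $\eta-\zeta$ of $g$, and $\zeta$ of $u$, whereas the latter yields the single-cutoff symbol $\tilde\chi(\xi-\zeta,\xi+\zeta)$ after expanding $\widehat{fg}$ as a convolution. The difference vanishes identically on the ``low-low-high'' regime $|\xi-\eta|,|\eta-\zeta|\ll|\zeta|$, since all three cutoff factors equal one there.

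Second, I would isolate the main contribution. The support of the difference symbol forces either $|\xi-\eta|$ or $|\eta-\zeta|$ to be comparable to $|\zeta|$; combined with the constraint $\xi\approx\zeta$ coming from the support of $\tilde\chi(\xi-\zeta,\xi+\zeta)$, this requires $f$ and $g$ to carry comparable, opposing high frequencies that cancel to produce a low output frequency --- exactly the high-high resonant interaction captured by the balanced paraproduct $\Pi(f,g)$. A careful comparison then yields
\[
T_f T_g - T_{fg} = -T_{\Pi(f,g)} + R,
\]
where $R$ collects commutator-type contributions coming from the transition regions of the cutoffs.

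Third, the main term will be controlled by combining the standard paraproduct mapping property
\[
\|T_a v\|_{\dot H^{s+\gamma_1+\gamma_2}} \lesssim \|a\|_{\dot B^{\gamma_1+\gamma_2}_{\infty,\infty}}\,\|v\|_{\dot H^s}
\]
with the high-high resonant bound
\[
\|\Pi(f,g)\|_{\dot B^{\gamma_1+\gamma_2}_{\infty,\infty}} \lesssim \||D|^{\gamma_1} f\|_{BMO}\,\||D|^{\gamma_2} g\|_{BMO},
\]
which is valid precisely when $\gamma_1+\gamma_2 \geq 0$ --- this is exactly where the balance hypothesis is used, ensuring that the dyadic sum over balanced frequency pairs converges. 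The remainder $R$ is handled using Lemma \ref{l:para-com}, and the condition $\gamma_i<1$ is precisely what guarantees that these commutator errors obey the claimed bound. The Besov-scale version will follow from the analogous decomposition, replacing $\dot H^s$ by $\dot B^s_{p,q}$ throughout.

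The principal obstacle is the bookkeeping of the remainder $R$: although morally this is the textbook argument for paraproduct associativity, the $M$-dependent, two-argument cutoff $\tilde\chi$ complicates the geometry, and one must verify carefully that the mismatch between the composition of two cutoffs and a single cutoff, in the transition regime, generates only para-commutator-type terms controllable by Lemma \ref{l:para-com}.
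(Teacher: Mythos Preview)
The paper does not prove this lemma; it is quoted verbatim from \cite{Ai-Ifrim-Tataru} (Lemma~2.5 there). So there is no ``paper's proof'' to compare against, only your proposal to evaluate on its own merits.

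Your overall strategy---reducing the difference to the high--high resonant interaction of $f$ and $g$---is the right idea, but the execution in your third step contains a genuine error. The ``standard paraproduct mapping property''
\[
\|T_a v\|_{\dot H^{s+\gamma_1+\gamma_2}} \lesssim \|a\|_{\dot B^{\gamma_1+\gamma_2}_{\infty,\infty}}\,\|v\|_{\dot H^s}
\]
is \emph{false} whenever $\gamma_1+\gamma_2>0$: paraproducts can lose regularity but never gain it. Concretely, take $a$ supported at a single low frequency and $v$ at a high frequency $N$; then $T_a v \approx a v$ sits at frequency $\sim N$, and the ratio of the two sides is $\sim N^{\gamma_1+\gamma_2}\to\infty$. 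So you cannot bound the term $T_{\Pi(f,g)}$ by first placing $\Pi(f,g)$ in $\dot B^{\gamma_1+\gamma_2}_{\infty,\infty}$ and then invoking a paraproduct bound.

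The repair is not to factor through $T_{\Pi(f,g)}$ at all. If you carry out the dyadic computation carefully you will find that the difference at output frequency $2^k$ is governed by
\[
S_{k}\bigl(\textstyle\sum_{j\gtrsim k,\ j'\sim j} f_j\, g_{j'}\bigr)\cdot P_k u,
\]
i.e.\ only the portion of the high--high interaction with the \emph{input} frequencies of $f,g$ \emph{above} the frequency of $u$ survives. This restriction $j\gtrsim k$ is exactly what makes the estimate work: one bounds the coefficient in $L^\infty$ by $\sum_{j\gtrsim k} 2^{-j(\gamma_1+\gamma_2)}\||D|^{\gamma_1}f\|_{BMO}\||D|^{\gamma_2}g\|_{BMO}\lesssim 2^{-k(\gamma_1+\gamma_2)}C$, which is precisely the decay needed. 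Your decomposition $-T_{\Pi(f,g)}+R$ obscures this, because it artificially inserts (and then must remove in $R$) the contributions with $j\ll k$, which are individually unbounded in the target norm. The remainder $R$ you describe is therefore not of para-commutator type and cannot be handled by Lemma~\ref{l:para-com} as you suggest.
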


Next, we recall the following Moser-type estimate. See for instance \cite{SQGzero}.
\begin{theorem}[Moser]\label{t:moser}
Let $F:\R \rightarrow \R$ be a smooth function with $F(0) = 0$, and 
\[
R(v) = F(v) - T_{F'(v)} v.
\]
Then 
\begin{equation}\label{moser}
\|R(v)\|_{W^{\half, \infty}}\lesssim_{\|v\|_{L^\infty}} \||D|^\half v\|_{L^\infty}.
\end{equation}
\end{theorem}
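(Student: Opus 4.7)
The approach is Bony's paralinearization via Littlewood--Paley telescoping, adapted to the paraproduct quantization used in this paper. Writing $P_k v$ for the dyadic block at frequency $\sim 2^k$, $v_{<k} := \sum_{l < k} P_l v$, and $v_{\leq k} = v_{<k} + P_k v$, I would telescope and Taylor expand:
\[
F(v) = \sum_k \bigl[ F(v_{\leq k}) - F(v_{<k}) \bigr] = \sum_k F'(v_{<k}) P_k v + \sum_k m_k (P_k v)^2,
\]
where
\[
m_k(x) := \int_0^1 (1-s) F''\bigl(v_{<k}(x) + s P_k v(x)\bigr) \, ds
\]
is uniformly bounded by a constant depending on $\|v\|_{L^\infty}$. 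The first sum differs from the paper's paraproduct $T_{F'(v)} v$ by two kinds of corrections: (i) the difference $F'(v_{<k}) - (\text{low-frequency part of } F'(v))$, which is itself quadratic in $v$ after another Taylor step; and (ii) the discrepancy between the standard Bony paraproduct and the cutoffs $\tilde\chi$, $\hat{P}_{>M}$ used in this paper's quantization, which are absorbed via Lemmas~\ref{l:para-com} and \ref{l:para-prod}. After bookkeeping, $R(v)$ decomposes as a sum $\sum_k R_k$ of quadratic pieces with frequency support $\lesssim 2^k$.

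For each piece I would use the \emph{asymmetric} bound
\[
\|R_k\|_{L^\infty} \lesssim_{\|v\|_{L^\infty}} \|P_k v\|_{L^\infty} \cdot \|v\|_{L^\infty} \lesssim_{\|v\|_{L^\infty}} 2^{-k/2} \||D|^{1/2} v\|_{L^\infty},
\]
where $\|P_k v\|_{L^\infty} \lesssim 2^{-k/2} \||D|^{1/2} v\|_{L^\infty}$ follows from Bernstein's inequality applied to $|D|^{-1/2} P_k$. Since $R_k$ has frequency support $\lesssim 2^k$, the dyadic piece $P_j R(v)$ collects only contributions with $k \gtrsim j$, so
\[
\|P_j R(v)\|_{L^\infty} \lesssim_{\|v\|_{L^\infty}} \sum_{k \gtrsim j} 2^{-k/2} \||D|^{1/2} v\|_{L^\infty} \lesssim 2^{-j/2} \||D|^{1/2} v\|_{L^\infty},
\]
from which $\||D|^{1/2} P_j R(v)\|_{L^\infty} \lesssim_{\|v\|_{L^\infty}} \||D|^{1/2} v\|_{L^\infty}$ uniformly in $j$, giving the desired $W^{1/2, \infty}$ bound after taking the supremum over $j$.

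The main obstacle is the careful bookkeeping with the paper's specific paraproduct quantization: the cutoffs $\tilde\chi$ and $\hat{P}_{>M}$ generate low-frequency commutator terms absent from the standard Bony setup, which are absorbed using the paracommutator and paraproduct calculus lemmas recalled above. A secondary subtlety is that the homogeneous seminorm $\||D|^{1/2} v\|_{L^\infty}$ does not control low frequencies of $v$; this is the reason $\|v\|_{L^\infty}$ must enter through the implicit constant via the $L^\infty$ bounds on $F'$ and $F''$ along the range of $v$, and it also justifies convergence of the telescoping sum at low $k$ in the setting of the application, where $v$ lives in a homogeneous Sobolev space with appropriate low-frequency decay.
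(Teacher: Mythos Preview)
The paper does not prove this theorem; it is stated as a recalled result with a citation to \cite{SQGzero}. So there is no in-paper argument to compare against, and your sketch should be judged on its own.

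Your approach---Bony paralinearization via the telescoping identity---is the standard one and is essentially correct. Two points deserve tightening. First, the claim that each piece $R_k$ has frequency support $\lesssim 2^k$ is not literally true: $m_k$ is a nonlinear function of $v_{\leq k}$ and therefore has Fourier content at all scales. What is true is that, by the chain rule and Bernstein, $\|\partial_x^N(m_k (P_k v)^2)\|_{L^\infty} \lesssim_{\|v\|_{L^\infty}} 2^{Nk}\|P_k v\|_{L^\infty}$, so $\|P_j R_k\|_{L^\infty}$ decays like $2^{-N(j-k)}$ for $j \gg k$ and any $N$; this rapid-decay version is what you actually need, and it suffices for your dyadic summation. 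Second, your item (i)---the discrepancy between $\sum_k F'(v_{<k}) P_k v$ and $T_{F'(v)} v$---is not fully resolved by ``another Taylor step''. Splitting $F'(v_{<k}) - P_{<k-c} F'(v)$, the piece $F'(v_{<k}) - F'(v)$ is indeed controlled by $\|v_{\geq k}\|_{L^\infty}$ via the mean value theorem, but the remaining piece $P_{\geq k-c} F'(v)$ requires knowing that $F'(v)$ itself inherits $C^{1/2}$ regularity from $v$. That is the \emph{easier} (non-paralinearized) Moser bound $\|F'(v)\|_{C^{1/2}} \lesssim_{\|v\|_{L^\infty}} \|v\|_{C^{1/2}}$, which is elementary via difference quotients, so there is no circularity---but it should be invoked explicitly rather than folded into a vague ``quadratic after Taylor'' remark.
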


\subsection{Difference quotients}

We recall that we denote difference quotients by 
\[
\dq^yh(x)=\frac{h(x+y)-h(x)}{y}, \qquad \sdq^yh(x)=\frac{h(x+y)-h(x)}{| y|},
\]
as well as the smooth function 
\[
F(s) = 1 - \frac{1}{\sqrt{1+s^2}},
\]
which in particular vanishes to second order at $s = 0$, satisfying $F(0)=F'(0)=0$. Using this notation, we may express
\[
    Q(\varphi, v)(t,x) = \int F(\dq^y\varphi(t,x)) \cdot \sdq^y v(t,x)\,dy.
\]
In addition, to facilitate the normal form analysis in later sections, we denote
\[
\psi := \D_x^{-1} F(\varphi_x).
\]

We have the following estimate which allows the balancing of up to one derivative over multilinear averages of difference quotients:

\begin{lemma}\label{Trilinear integral estimate}
Let $i = \overline{1,n}$ and $p_i, r \in [1, \infty]$ and $\alpha_i, \beta_i \in [0, 1]$ satisfying
\[
\sum_{i}\frac{1}{p_i} = \frac{1}{r}, \qquad n-1 < \sum_{i}\alpha_i\leq n, \qquad 0 \leq \sum_{i}\beta_i < n-1.
\]
Then
\[
 \left\|\int \prod \dq^yf_i \,dy\right\|_{L_x^r} \lesssim \prod \||D|^{\alpha_i} f_i\|_{L^{p_i}} + \prod \||D|^{\beta_i} f_i\|_{L^{p_i}}.
 \]

\end{lemma}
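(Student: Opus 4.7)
The plan is to reduce to single-factor bounds via Minkowski and H\"older, establish a $y$-dependent Sobolev estimate for $\|\delta^y f\|_{L^p}$, and split the $y$-integral at unit scale to exploit the two hypotheses on $\sum_i \alpha_i$ and $\sum_i \beta_i$.

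First, by Minkowski's inequality in $y$ followed by H\"older's inequality in $x$ (using $\sum_i 1/p_i = 1/r$),
\[
\left\| \int \prod_i \delta^y f_i \, dy \right\|_{L^r_x} \leq \int \prod_i \|\delta^y f_i\|_{L^{p_i}_x} \, dy.
\]
The key single-factor estimate I then need is that for $\gamma \in [0, 1]$ and $p \in [1, \infty]$,
\[
\|\delta^y f\|_{L^p} \lesssim |y|^{\gamma - 1} \||D|^{\gamma} f\|_{L^p}.
\]
I would prove this via a Littlewood--Paley decomposition $f = \sum_k P_k f$: each dyadic piece satisfies $\|P_k f(\cdot+y) - P_k f\|_{L^p} \lesssim \min(1, |y| 2^k) \|P_k f\|_{L^p}$, using the fundamental theorem of calculus together with Bernstein's inequality for the $|y| 2^k$ factor and the triangle inequality for the constant bound. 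Summing against $\|P_k f\|_{L^p} \lesssim 2^{-k\gamma} \||D|^\gamma f\|_{L^p}$ produces $\|f(\cdot+y)-f\|_{L^p} \lesssim |y|^\gamma \||D|^\gamma f\|_{L^p}$, and dividing by $|y|$ gives the claim; the endpoints $\gamma \in \{0,1\}$ are direct.

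Equipped with this estimate, I would split the $y$-integration as $\{|y| \leq 1\} \cup \{|y| > 1\}$, applying the single-factor bound with $\gamma = \alpha_i$ on the first piece and $\gamma = \beta_i$ on the second. This gives on $|y| \leq 1$
\[
\prod_i \|\delta^y f_i\|_{L^{p_i}} \lesssim |y|^{\sum_i \alpha_i - n} \prod_i \||D|^{\alpha_i} f_i\|_{L^{p_i}},
\]
which is integrable near the origin since $\sum_i \alpha_i > n-1$; and on $|y| > 1$
\[
\prod_i \|\delta^y f_i\|_{L^{p_i}} \lesssim |y|^{\sum_i \beta_i - n} \prod_i \||D|^{\beta_i} f_i\|_{L^{p_i}},
\]
which is integrable at infinity since $\sum_i \beta_i < n-1$. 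Adding the two contributions yields the claim.

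The only real technical point is the single-factor difference-quotient estimate, particularly at the endpoint exponents, but this is handled uniformly by the Littlewood--Paley summation above and is otherwise a standard fact from the characterization of Sobolev regularity via differences. Once it is in hand, the rest of the argument is a routine integration of two disjointly-supported power weights.
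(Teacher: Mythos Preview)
Your proof is correct and follows essentially the same approach as the paper's: the paper also applies Minkowski/H\"older and splits the $y$-integral at $|y|=1$, using the single-factor bound $\|\delta^y f\|_{L^p} \lesssim |y|^{\gamma-1}\||D|^\gamma f\|_{L^p}$ implicitly to obtain the power weight $|y|^{\sum \alpha_i - n}$. You have simply spelled out the Littlewood--Paley justification of that single-factor estimate in more detail than the paper does.
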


\begin{proof}
We write 
\[
\int \prod \dq^yf_i \,dy = \int_{|y| \leq 1} + \int_{|y| > 1}.
\]
For the former integral, we have by H\"older
\[
 \left\| \int_{|y| \leq 1} \prod \dq^yf_i \,dy \right\|_{L_x^r} \lesssim \int_{|y| \leq 1} \frac{1}{|y|^{n - \sum \alpha_i}} \prod \||D|^{\alpha_i} f_i\|_{L^{p_i}} \, dy \lesssim \prod \||D|^{\alpha_i} f_i\|_{L^{p_i}}.
\]
The latter integral is treated similarly.
\end{proof}

\section{The null structure and paradifferential equation}\label{s:equations}

In this section and the next, we will reduce the energy estimates and well-posedness for the linearized equation \eqref{linearized-eqn}, 
\[
\D_tv - 2\logd \D_xv = \D_x Q(\varphi, v),
\]
to that of a paradifferential equation.

One can achieve this by viewing \eqref{linearized-eqn} as a paradifferential equation with perturbative source, where the main task is to paralinearize the cubic term $\D_x Q(\varphi, v)$. Such an analysis was performed by Hunter-Shu-Zhang in \cite{HSZglobal} and refined by the authors in \cite{SQGzero}.

In the current article however, we are interested in further refining the paralinearization of \eqref{linearized-eqn} by insisting that the perturbative errors satisfy \emph{balanced estimates}. Precisely, we establish all of our estimates using only the control norms
\[
A := \|\D_x \varphi\|_{L^\infty}, \qquad B := \|\D_x \varphi\|_{C^{\half +}},
\]
where $A$ corresponds to the scaling-critical threshold, while $B$ lies half a derivative above scaling. For comparison, the local well-posedness analysis in \cite{SQGzero} uses control norms with $\varphi \in C^{2+}$, a full derivative above scaling.

A direct estimate of the paralinearization errors will no longer suffice to establish estimates controlled by $A$ and $B$. Instead, we will rely on an appropriate paradifferential normal form transformation to remove source components that do not directly satisfy the desired balanced cubic estimates. In this section, we first consider various formulations of the paradifferential equation which will be useful in the following sections.

\subsection{Null structure}

Although $F(\dq^y \varphi)$ is principally quadratic in $\varphi$ (and thus $Q(\varphi, v)$ is cubic), estimates on derivatives of $F(\dq^y \varphi)$ do not fully recognize its quadratic structure. This is because they are limited by the cases of low-high interaction where derivatives fall on the high frequency variable. As a result, from the perspective of establishing balanced estimates, $F(\dq^y \varphi)$ behaves essentially like a linear coefficient. 

However, we observe that $Q$ exhibits a null structure in the following sense. By writing
\[
\Omega(f, g) = \int \dq^yf \cdot \sdq^yg\,dy
\]
and using the heuristic approximation
\[
F(\dq^y \varphi) \approx T_{F'(\varphi_x)} \dq^y \varphi,
\]
we may express $Q$ as a quadratic form with low frequency coefficient,
\begin{equation}\label{atoq}
Q(\varphi, v) \approx T_{F'(\varphi_x)}\Omega(\varphi, v).
\end{equation}
We then observe that the bilinear form $\Omega(\varphi, v)$ exhibits a null structure, since its symbol
\[
\Omega(\xi_1, \xi_2) = \int \sgn{y} \cdot \frac{(e^{i\xi_1 y}-1)(e^{i\xi_2 y}-1)}{y^2} \, dy
\]
satisfies the resonance identity
\begin{equation}\label{resonance}
\Omega(\xi_1, \xi_2) = \omega(\xi_1) + \omega(\xi_2) - \omega(\xi_1 + \xi_2), \qquad \omega(\xi) = 2i \xi \log|\xi|.
\end{equation}
This null structure underlies the normal form analysis, which we perform in the next section.

\

We make the above discussion precise in the following lemma. Recall that we denote
\[
\psi := \D_x^{-1} F(\varphi_x).
\]

\begin{lemma}\label{l:toy-red}
We have
\[
Q(\varphi, v) = \Omega(\psi, v) + R(x, D) v
\]
where 
\[
\|(\D_x R)(x, D) v\|_{L^2} \lesssim_A B^2 \|v\|_{L^2}.
\]
\end{lemma}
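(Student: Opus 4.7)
The plan is to work with the kernel $G(x, y) := F(\dq^y \varphi(x)) - \dq^y \psi(x)$, so that $R(x, D) v = \int G(x, y) \sdq^y v \, dy$. The operator $(\D_x R)(x, D)$ arises from differentiating the symbol of $R$ in the coefficient variable $x$, namely
\[
(\D_x R)(x, D) v(x) = \int \D_x G(x, y) \, \sdq^y v(x) \, dy,
\]
so the task reduces to proving $\| \int \D_x G(\cdot, y) \, \sdq^y v \, dy \|_{L^2_x} \lesssim_A B^2 \|v\|_{L^2}$. I would first use $\psi_x = F(\varphi_x)$ together with the fundamental theorem of calculus to write $\dq^y \psi(x) = \int_0^1 F(\varphi_x(x + ty)) \, dt$. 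Differentiating in $x$ and applying the mean value theorem to $\D_x \dq^y \psi = \dq^y F(\varphi_x)$ then produces the factorization
\[
\D_x G(x, y) = \bigl[F'(\dq^y \varphi(x)) - \tilde F'(x, y)\bigr] \, \dq^y \varphi_x(x),
\qquad \tilde F'(x, y) := \int_0^1 F'\bigl((1-t)\varphi_x(x) + t \varphi_x(x+y)\bigr) \, dt.
\]

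The key pointwise estimate follows by observing that the argument $\dq^y \varphi(x)$ of the first $F'$ and each chord value $(1-t)\varphi_x(x) + t \varphi_x(x+y)$ both lie in the range of $\varphi_x$ on $[x, x+y]$. Using $\|F''\|_{L^\infty} \lesssim_A 1$ and the $C^{\half +}$ regularity of $\varphi_x$ encoded in $B$ yields $|F'(\dq^y\varphi) - \tilde F'(x, y)| \lesssim_A B |y|^{\half +}$, which combined with $|\dq^y \varphi_x(x)| \lesssim \min(B |y|^{-\half +}, A/|y|)$ gives
\[
\|\D_x G(\cdot, y)\|_{L^\infty_x} \lesssim_A \min\bigl( B^2 |y|^{0+}, \, A^2 / |y| \bigr).
\]

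The proof closes via Minkowski's integral inequality paired with $\|\sdq^y v\|_{L^2} \leq 2\|v\|_{L^2}/|y|$. Splitting the $y$ integral into $|y| \leq 1$ and $|y| \geq 1$, both regimes contribute integrals that converge: the small-$|y|$ contribution gives $\lesssim B^2 \|v\|_{L^2}$, and the large-$|y|$ contribution gives $\lesssim A^2 \|v\|_{L^2}$, which is absorbed into $B^2 \|v\|_{L^2}$ via $A \lesssim B$ from the Sobolev embedding $C^{\half +} \hookrightarrow L^\infty$. The main obstacle is the derivation of the factorization for $\D_x G$: a naive triangle inequality on $\D_x G = F'(\dq^y \varphi) \dq^y \varphi_x - \dq^y F(\varphi_x)$ only reveals one factor of $B$ from $\dq^y \varphi_x$, producing an unbalanced estimate of order $AB$. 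Recognizing the identity $\dq^y F(\varphi_x) = \tilde F'(x, y) \, \dq^y \varphi_x$ is essential, since it allows the two terms to combine into the difference $F'(\dq^y \varphi) - \tilde F'$, whose oscillation-driven smallness of order $B |y|^{\half +}$ supplies the missing second power of $B$ required for the balanced bound.
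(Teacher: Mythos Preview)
Your proof is correct and follows essentially the same strategy as the paper. The paper also computes $(\D_x R)(x,D)v = \int K(x,y)(v(x+y)-v(x))\,dy$ with $|y|K = F'(\dq^y\varphi)\dq^y\varphi_x - \dq^y F(\varphi_x)$, then bounds $K$ pointwise and closes with Minkowski. The only packaging difference is that the paper adds and subtracts $F'(\varphi_x(x))$ to split $|y|K$ into a Taylor remainder $K_1$ and a term $K_2 = (F'(\dq^y\varphi) - F'(\varphi_x))\dq^y\varphi_x$, whereas you keep the single factorization $[F'(\dq^y\varphi) - \tilde F']\dq^y\varphi_x$ via the exact mean value integral; since $K_1 + K_2$ is identically your expression, the two decompositions coincide. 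The paper then phrases the $y$-integration through its general Lemma~\ref{Trilinear integral estimate} (with exponents $\alpha_i = \tfrac12+$, $\beta_i = \tfrac12-$) rather than the explicit $|y|\lessgtr 1$ split you use, but this is the same computation. One cosmetic remark: your justification ``$A \lesssim B$ via the Sobolev embedding $C^{\frac12+} \hookrightarrow L^\infty$'' is slightly misnamed---the $C^{\frac12+}$ norm simply contains the $L^\infty$ norm by definition---but the conclusion is of course fine.
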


\begin{proof}
We write
\begin{equation}
\begin{aligned}
Q(\varphi, v) - \Omega(\psi, v) &= \int \frac{F(\dq^y \varphi) - \dq^y \D_x^{-1} F(\varphi_x)}{|y|} \cdot (v(x + y) - v(x)) \, dy =: R(x, D)v
\end{aligned}
\end{equation}
where
\[
r(x, \xi) = \int \frac{F(\dq^y \varphi) - \dq^y \D_x^{-1} F(\varphi_x)}{|y|} (e^{i\xi y} - 1) \, dy.
\]
Then we have
\begin{equation*}
\begin{aligned}
(\D_x R)(x, D)v &= \int \frac{F'(\dq^y \varphi)\dq^y \varphi_x - \dq^y F(\varphi_x)}{|y|} \cdot (v(x + y) - v(x)) \, dy \\
& =: \int K(x, y) \cdot (v(x + y) - v(x)) \, dy.
\end{aligned}
\end{equation*}

We first estimate $K$, which we may write as 
\begin{equation*}
\begin{aligned}
|y| K(x, y) &= \frac{1}{y} (F'(b)(a - b) - (F(a) - F(b))+\frac{1}{y} (F'(\dq^y \varphi)-F'(\varphi_x))(a - b) \\
&=: |y| K_1(x, y) + |y| K_2(x, y),
\end{aligned}
\end{equation*}
where $a = \varphi_x(x + y)$, $b = \varphi_x(x)$. From $K_1$ we obtain a Taylor expansion,
\[
\|K_1(\cdot, y)\|_{L^\infty_x} \lesssim_A  \left\|\frac{a - b}{y} \right\|_{L^\infty_x}^2 = \|\dq^y \varphi_x\|_{L^\infty_x}^2.
\]
For $K_2$, we have 
\begin{align*}
\|K_2(\cdot, y) \|_{L^\infty_x} &\lesssim_A \left\|\frac{\varphi(x+y)-\varphi(x)-y\varphi_x(x)}{y^2} \right\|_{L^\infty_x} \left\|\frac{a - b}{y} \right\|_{L^\infty_x} =  \|\dq^{y, (2)} \varphi\|_{L^\infty_x} \|\dq^y \varphi_x\|_{L^\infty_x},
\end{align*}
where $\dq^{y, (2)}$ denotes the second-order difference quotient.

By Minkowski's inequality,
\begin{equation*}
\begin{aligned}
\|(\D_x R)(x, D)v \|_{L^2} &\lesssim \int \|K(\cdot, y)\|_{L^\infty_x} \|v(\cdot + y) - v(\cdot) \|_{L^2_x} \, dy \\
&\lesssim_A  \int\|\dq^y \varphi_x\|_{L^\infty_x} (\|\dq^y \varphi_x\|_{L^\infty_x} + \|\dq^{y, (2)} \varphi\|_{L^\infty_x} )\|v \|_{L^2_x} \, dy.
\end{aligned}
\end{equation*}
Applying the argument of Lemma~\ref{Trilinear integral estimate} with $\alpha_1 = \alpha_2 = \half+$ and $\beta_1 = \beta_2 = \half-$, we conclude
\[
\|(\D_x R)(x, D)v \|_{L^2} \lesssim_A B^2 \|v\|_{L^2}.
\]

\

\end{proof}

\subsection{The paradifferential flow}

Associated to the linearized equation \eqref{linearized-eqn}, we have the corresponding inhomogeneous paradifferential flow,
\begin{equation}\label{paradiff-eqn}
\D_tv - 2\logd \D_xv - \D_x Q_{lh}(\varphi, v) = f,
\end{equation}
where we have expressed the frequency decomposition of the (essentially) quadratic form as
\begin{equation}\label{paralin}
\begin{aligned}
Q(\varphi, v) &= \int T_{F(\dq^y \varphi)} \sdq^yv\,dy + \int T_{\sdq^yv} F(\dq^y \varphi) \,dy + \int \Pi(\sdq^yv, F(\dq^y \varphi)) \,dy \\
&=: Q_{lh}(\varphi, v) + Q_{hl}(\varphi, v) + Q_{hh}(\varphi, v).
\end{aligned}
\end{equation}
We frequency decompose $\Omega = \Omega_{lh} + \Omega_{hl} + \Omega_{hh}$ in the analogous way.

In Section~\ref{s:reduction}, we show that the linearized equation \eqref{linearized-eqn} reduces to the paradifferential flow \eqref{paradiff-eqn}, with $f$ playing a perturbative role, in the sense that it satisfies balanced, cubic estimates. However, since $Q$ and hence its paradifferential errors $Q_{hl}(\varphi, v)$ and $Q_{hh}(\varphi, v)$ are essentially quadratic, this will become apparent only after an appropriate paradifferential normal form change of variables. 

Here, we establish a preliminary quadratic estimate for the reduction, which will be useful in the course of constructing and evaluating the normal form transformation later in Section~\ref{s:reduction}.

\

We first extract the principal components of $\Omega_{lh}$, which include a transport term of order 0 and a dispersive term of logarithmic order:

\begin{lemma}\label{l:q-principal}
We can express
\begin{equation}\label{qlh-expression}
\Omega_{lh}(\psi, v) = 2 (T_{\logd \D_x \psi} v - T_{\D_x\psi} \logd v + \D_x [T_\psi, \logd] v).
\end{equation}
Further, we have
\begin{equation}\label{qlh-expression-full}
Q_{lh}(\varphi, v) = 2(T_{(\logd - 1) \D_x \psi + R} v - T_{\D_x \psi} \logd v) + \Gamma(\D_x^2 \psi, \D_x^{-1} v)
\end{equation}
where
\begin{equation}\label{l-est}
\||D_x|^\half \Gamma\|_{L^2} \lesssim_A B \|v\|_{L^2}, \qquad \|\D_x \Gamma\|_{L^2} \lesssim_A B \||D|^\half v\|_{L^2},
\end{equation}
as well the pointwise estimates
\begin{equation}\label{l-est-infty}
\begin{aligned}
&\||D_x|^\half \Gamma\|_{L^\infty} \lesssim_A B \|v\|_{L^\infty}, \qquad  &\|\D_x \Gamma\|_{L^\infty} \lesssim_A B \||D|^\half v\|_{L^\infty}.
\end{aligned}
\end{equation}
\end{lemma}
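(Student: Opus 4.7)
The plan is to establish (\ref{qlh-expression}) as an exact paradifferential identity by verifying the underlying symbol identity, which in turn follows from the resonance structure (\ref{resonance}). Since both sides are paradifferential operators in the low-high configuration with coefficient $\psi$ and input $v$ sharing the same paradifferential cutoff, it suffices to match symbols in the regime $|\xi_1| \ll |\xi_2|$, where $\xi_1$ is dual to $\psi$ and $\xi_2$ to $v$. The three terms on the right contribute symbols $2i\xi_1\log|\xi_1|$, $-2i\xi_1\log|\xi_2|$, and $2i(\xi_1+\xi_2)(\log|\xi_2|-\log|\xi_1+\xi_2|)$ respectively. Adding, the $\log|\xi_2|$ cross-terms combine to yield $2i\xi_2\log|\xi_2| = \omega(\xi_2)$, giving $\omega(\xi_1)+\omega(\xi_2)-\omega(\xi_1+\xi_2) = \Omega(\xi_1,\xi_2)$ by (\ref{resonance}), which matches the symbol of $\Omega_{lh}(\psi,v)$.

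For (\ref{qlh-expression-full}), first invoke Lemma~\ref{l:toy-red} to write $Q(\varphi,v) = \Omega(\psi,v) + R(x,D)v$ and take the low-high paradifferential projection; since $\|\D_x R v\|_{L^2} \lesssim_A B^2\|v\|_{L^2}$, the remainder contributes to the coefficient correction denoted $R$ in (\ref{qlh-expression-full}) and to $\Gamma$. This reduces matters to computing $\Omega_{lh}(\psi,v)$, for which we substitute (\ref{qlh-expression}). The principal part of $\D_x[T_\psi,\logd]$ is then extracted by Taylor-expanding
\begin{equation*}
-i(\xi_1+\xi_2)\log|1+\xi_1/\xi_2| = -i\xi_1 + O(\xi_1^2/\xi_2)
\end{equation*}
in the low-high regime. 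The leading $-i\xi_1$ contributes $-T_{\D_x\psi}v$, which combines with $T_{\logd \D_x \psi}v$ into the coefficient $(\logd - 1)\D_x\psi$; the $O(\xi_1^2/\xi_2)$ remainder, carrying two derivatives on $\psi$ against minus one on $v$, together with the Moser-type errors arising in $\psi = \D_x^{-1}F(\varphi_x)$ and the low-high part of $R(x,D)$, constitutes the $\Gamma(\D_x^2\psi,\D_x^{-1}v)$ term.

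The bounds (\ref{l-est}) and (\ref{l-est-infty}) on $\Gamma$ then follow from standard paraproduct and para-commutator estimates (Lemmas~\ref{l:para-prod} and \ref{l:para-com}) once the derivative balance in $\Gamma$ is secured: the coefficient $\D_x^2\psi = \D_x F(\varphi_x)$ is controlled via the Moser bound (Theorem~\ref{t:moser}) in terms of the $A$ and $B$ norms of $\varphi_x$, and the minus-one derivative gain on $v$ absorbs the remaining regularity deficit. The $L^\infty$ variant (\ref{l-est-infty}) is analogous, substituting $L^\infty$-based paraproduct estimates for the $L^2$-based ones. The main difficulty will lie in the careful bookkeeping of balanced errors from three distinct sources---the Taylor remainder in the commutator expansion, the Moser expansion producing $\psi$ from $F(\varphi_x)$, and the low-high component of $R(x,D)v$ from Lemma~\ref{l:toy-red}---ensuring that together they yield the claimed $\half$-derivative gain controlled solely by $B$, rather than the full-derivative gain that would require the stronger control norms used in prior work.
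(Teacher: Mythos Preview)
Your approach is correct and essentially matches the paper's proof: both derive \eqref{qlh-expression} from the resonance identity \eqref{resonance} (the paper writes $\Omega_{lh}(\psi,v) = 2T_{\logd\D_x\psi}v + 2[T_\psi,\logd\D_x]v$ and then algebraically splits the commutator, which is exactly your symbol computation), and both obtain \eqref{qlh-expression-full} by extracting the principal part $-2T_{\D_x\psi}v$ from $2\D_x[T_\psi,\logd]v$ via the Taylor expansion you wrote, then appending the low-high part of Lemma~\ref{l:toy-red}. One minor point: there are no separate ``Moser-type errors arising in $\psi = \D_x^{-1}F(\varphi_x)$'' to track here, since $\psi$ appears exactly and the passage from $Q$ to $\Omega(\psi,\cdot)$ is already handled entirely by Lemma~\ref{l:toy-red}; the only contributions to $\Gamma$ are the commutator remainder and (a portion of) the low-high piece of $R(x,D)v$.
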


\begin{proof}
We use the resonance identity \eqref{resonance} to expand
\begin{equation}
\begin{aligned}
\Omega_{lh}(\psi, v) &= 2 T_{\logd \D_x \psi} v + 2[T_\psi, \logd \D_x]v \\
&=  2 T_{\logd \D_x \psi} v - 2 T_{\D_x\psi} \logd v + 2\D_x [T_\psi, \logd] v,
\end{aligned}
\end{equation}
obtaining \eqref{qlh-expression}. Then the remaining commutator may be expressed as
\[
-2T_{\D_x \psi} v + \Gamma(\D_x^2 \psi, \D_x^{-1} v)
\]
and $\Gamma$ denotes the subprincipal remainder, which has a favorable balance of derivatives on the low frequency and thus may be estimated as \eqref{l-est} and \eqref{l-est-infty}. Combined with the low-high component of Lemma~\ref{l:toy-red}, we obtain \eqref{qlh-expression-full}.
\end{proof}

\begin{proposition}\label{p:quadratic-lin}
Consider a solution $v$ to \eqref{linearized-eqn}. Then $v$ satisfies
\begin{equation}\label{paradiff-eqn-2}
(\D_t - 2T_{1 - \D_x\psi} \logd \D_x - 2T_{(\logd - 1) \D_x\psi + R} \D_x) v = f
\end{equation}
where
\begin{equation}\label{quadratic-est}
\||D|^{-\half}f\|_{L^2} \lesssim_A B \|v\|_{L^2}.
\end{equation}
\end{proposition}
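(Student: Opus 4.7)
The plan is to paralinearize the right-hand side $\D_x Q(\varphi, v)$, identify the principal paradifferential operators that appear, and throw everything else into $f$. Starting from $\D_t v - 2\logd \D_x v = \D_x Q(\varphi, v)$, I would decompose $Q = Q_{lh} + Q_{hl} + Q_{hh}$ via \eqref{paralin}, apply Lemma~\ref{l:q-principal} to rewrite $Q_{lh}$ as in \eqref{qlh-expression-full}, and then commute the outer $\D_x$ through the paraproducts using $\D_x T_a = T_a \D_x + T_{\D_x a}$. This produces the two principal contributions $2T_{(\logd-1)\D_x\psi+R}\D_x v$ and $-2T_{\D_x\psi}\logd\D_x v$, plus subprincipal commutator and Moser-type remainders. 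On the linear side I split
\[
-2\logd\D_x v = -2T_{1-\D_x\psi}\logd\D_x v - 2T_{\D_x\psi}\logd\D_x v + 2(T_1 - \mathrm{Id})\logd\D_x v,
\]
so that the $T_{\D_x\psi}\logd\D_x v$ pieces cancel and the equation takes the form \eqref{paradiff-eqn-2}, with everything else collected into $f$.

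The residue $f$ splits into four ingredients: (i) the subprincipal $\D_x\Gamma$ from \eqref{qlh-expression-full}; (ii) paraproduct commutator remainders with coefficients like $\D_x^2\psi$ and $\D_x R$; (iii) the low-frequency cutoff correction $(T_1 - \mathrm{Id})\logd\D_x v$; and (iv) the higher-frequency pieces $\D_x Q_{hl}(\varphi,v) + \D_x Q_{hh}(\varphi,v)$. For (i), \eqref{l-est} yields $\||D|^{1/2}\Gamma\|_{L^2} \lesssim_A B\|v\|_{L^2}$ directly. For (ii), Theorem~\ref{t:moser} controls $\D_x \psi = F(\varphi_x)$ in $C^{1/2+}$ by $B$ (with the analogous bound on $R$ provided by Lemma~\ref{l:toy-red}), after which Lemmas~\ref{l:para-com} and~\ref{l:para-prod} extract the required half derivative from the paraproduct factors. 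Ingredient (iii) only sees frequencies $\lesssim M$ and is harmless.

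The main obstacle is (iv). Writing $Q_{hl} + Q_{hh}$ as an integral in $y$ of (high-high and high-low) paraproducts of $F(\dq^y\varphi)$ with $\sdq^y v$, and paralinearizing $F(\dq^y\varphi) = T_{F'(\dq^y\varphi)}\dq^y\varphi + (\text{Moser remainder})$ via Theorem~\ref{t:moser}, each contribution reduces to a trilinear integral in two copies of $\varphi$ and one of $v$. The target bound $\||D|^{1/2}(Q_{hl} + Q_{hh})\|_{L^2} \lesssim_A B\|v\|_{L^2}$ then follows from Lemma~\ref{Trilinear integral estimate} with exponents chosen to place an $L^\infty$ norm (giving $A$) on one $\varphi$-factor, a $C^{1/2+}$ norm (giving $B$) on the other, and an $L^2$ norm on $v$. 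The difficulty is precisely that $A$ and $B$ together only reach a half derivative above scaling, so any naive distribution putting a full derivative on a single $\varphi$-factor is unavailable; the multilinear averaging in Lemma~\ref{Trilinear integral estimate} is what lets the missing derivative be split as $\half + \half$ across the two $\varphi$-factors, in a way that reflects the null structure expressed by the resonance identity \eqref{resonance}.
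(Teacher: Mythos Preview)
Your overall strategy matches the paper's exactly: decompose $Q=Q_{lh}+Q_{hl}+Q_{hh}$, invoke Lemma~\ref{l:q-principal} on $Q_{lh}$, commute the outer $\D_x$ through the paracoefficients, and absorb the rest into $f$. Ingredients (i)--(iii) are handled just as in the paper (the paper in fact glosses over the $(T_1-\mathrm{Id})$ low-frequency correction you mention in (iii), so you are being slightly more careful there).

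Where you diverge is (iv). The paper treats $\D_x Q_{hl}$ and $\D_x Q_{hh}$ in one line: the outer half-derivative lands on the high-frequency factor $F(\dq^y\varphi)$, and since $F$ vanishes to second order at zero, a $C^{\half+}$ bound on $F(\dq^y\varphi)$ costs only $\lesssim_A B$; a standard paraproduct estimate then gives \eqref{quadratic-est} directly. Your route via a Moser paralinearization of $F(\dq^y\varphi)$ and Lemma~\ref{Trilinear integral estimate} is more elaborate than needed, and as written has a gap: Lemma~\ref{Trilinear integral estimate} bounds $\int\prod\dq^y f_i\,dy$ in $L^r$, not in $\dot H^{1/2}$, so you still owe an explanation of how the extra $|D|^{1/2}$ is distributed before that lemma applies. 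This is fixable (commute $|D|^{1/2}$ onto the high-frequency input first), but the paper's direct argument avoids the detour entirely.

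Finally, your closing remark that the bound for (iv) ``reflects the null structure expressed by the resonance identity \eqref{resonance}'' is a misattribution. The target here is a \emph{quadratic} estimate ($\lesssim_A B\|v\|_{L^2}$, one power of $B$), which follows from ordinary paraproduct calculus and the quadratic vanishing of $F$; the resonance identity is what powers the \emph{cubic} balanced estimates in Sections~\ref{s:reduction} and~\ref{s:energy estimates}, not this step.
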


\begin{proof}
We express \eqref{linearized-eqn} in terms of the paradifferential equation \eqref{paradiff-eqn} with source,
\[
\D_tv - 2 \logd \D_xv - \D_x Q_{lh}(\varphi, v) = \D_x Q_{hl}(\varphi, v) + \D_x Q_{hh}(\varphi, v).
\]

We estimate the source terms. We see directly from definition that $\D_x Q_{hl}(\varphi, v)$ has a favorable balance of derivatives which satisfies \eqref{quadratic-est} and may be absorbed into $f$. The balanced $Q_{hh}$ term similarly satisfies \eqref{quadratic-est}, so we have thus reduced \eqref{linearized-eqn} to \eqref{paradiff-eqn}.

It then suffices to apply \eqref{qlh-expression-full} of Lemma~\ref{l:q-principal} to the remaining paradifferential $Q_{lh}$ term on the left hand side of \eqref{paradiff-eqn} to obtain \eqref{paradiff-eqn-2}. Here, the $\Gamma$ contribution may be absorbed into $f$ directly. Further, we have commuted the $\D_x$ outside $Q_{lh}$ through the low frequency paracoefficients, since the cases where this derivative falls on the low frequency coefficients,
\[
 2(T_{(\logd - 1) \D_x^2 \psi + \D_x R} v - T_{\D_x^2 \psi} \logd v),
\]
have a favorable balance of derivatives, satisfying \eqref{quadratic-est}.

\end{proof}

\subsection{Nonlinear equations}

We will also use the paradifferential equation \eqref{paradiff-eqn-2} in the context of the nonlinear solutions $\varphi$. To conclude this section, we establish preliminary quadratic bounds on the inhomogenity of the paradifferential flow, in analogy with the preceding Proposition~\ref{p:quadratic-lin} for the linearized counterpart.

\begin{proposition}\label{p:psi-eqn}
 Consider a solution $\varphi$ to \eqref{SQG}. Then $\varphi$ satisfies
\begin{equation}\label{psi-eqn}
(\D_t - 2T_{1 - \D_x \psi} \logd \D_x - 2T_{(\logd - 1) \D_x \psi + R} \D_x) \varphi = f
\end{equation}
where
\begin{equation}\label{psi-eqn-est}
\||D_x|^{\half} f\|_{L^\infty} \lesssim_A B, \qquad \|\D_x f\|_{L^\infty} \lesssim_A B^2.
\end{equation}
The same holds for $\psi$ in the place of $\varphi$.
\end{proposition}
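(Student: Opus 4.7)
The plan is to mirror the proof of Proposition~\ref{p:quadratic-lin} in the $L^\infty$ setting. Beginning from the SQG equation \eqref{SQG} in the form
\[
\D_t\varphi - 2\logd \D_x\varphi = Q(\varphi, \D_x\varphi),
\]
I would paradifferentially decompose the right-hand side via \eqref{paralin} with $v = \D_x \varphi$, giving $Q(\varphi, \D_x\varphi) = Q_{lh}(\varphi, \D_x\varphi) + Q_{hl}(\varphi, \D_x\varphi) + Q_{hh}(\varphi, \D_x\varphi)$. Applying Lemma~\ref{l:q-principal} to $Q_{lh}(\varphi, \D_x\varphi)$, the two paradifferential pieces in \eqref{qlh-expression-full} cancel against the free evolution $2\logd \D_x \varphi$ (up to a low-frequency smoothing correction coming from the paradifferential cutoff) and reproduce exactly the paradifferential operators on the left-hand side of \eqref{psi-eqn}. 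The residual source thus takes the form
\[
f = 2\bigl(\logd \D_x\varphi - T_1 \logd \D_x\varphi\bigr) + \Gamma(\D_x^2\psi, \varphi) + Q_{hl}(\varphi, \D_x\varphi) + Q_{hh}(\varphi, \D_x\varphi).
\]

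For each piece of $f$, I would verify the bounds \eqref{psi-eqn-est}. The low-frequency correction $\logd \D_x\varphi - T_1\logd \D_x\varphi$ is supported at frequencies $\lesssim M$ and is controlled directly by $A$, hence by $B$. The $\Gamma$ contribution is treated by the pointwise estimates \eqref{l-est-infty} applied with $v = \D_x\varphi$, substituting $\|v\|_{L^\infty} \lesssim A$ in the first and $\||D|^\half v\|_{L^\infty} \lesssim B$ in the second, yielding $\||D_x|^\half \Gamma\|_{L^\infty} \lesssim_A B$ and $\|\D_x \Gamma\|_{L^\infty} \lesssim_A B^2$. For $Q_{hl}(\varphi,\D_x\varphi)$ and $Q_{hh}(\varphi,\D_x\varphi)$, I would use the vanishing $F(0) = F'(0) = 0$ (equivalently its paralinearization via Theorem~\ref{t:moser}) to rewrite each as a trilinear integral in $y$ in difference quotients of $\varphi$ and $\D_x\varphi$, and then apply an $L^\infty$-variant of Lemma~\ref{Trilinear integral estimate} distributing the derivatives so that each high-frequency factor absorbs at most half a derivative above scaling. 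This yields the quadratic $B^2$ bound for $\|\D_x f\|_{L^\infty}$ and, dropping half a derivative, the linear $AB \lesssim_A B$ bound for $\||D_x|^\half f\|_{L^\infty}$.

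For the analogous assertion for $\psi = \D_x^{-1} F(\varphi_x)$, I would differentiate in time,
\[
\D_t \psi = \D_x^{-1}\bigl(F'(\varphi_x) \D_t \varphi_x\bigr),
\]
and paralinearize $F'(\varphi_x) = T_{F''(\varphi_x)}\varphi_x + R$ using Theorem~\ref{t:moser}. Substituting the equation for $\D_t\varphi_x$ derived in the first part of the proof transfers the paradifferential operators on the left-hand side of \eqref{psi-eqn} onto $\psi$; the additional product-rule commutators between $\D_x^{-1}$, $T_{F'(\varphi_x)}$, and the main paradifferential operators, together with the Moser remainders, are balanced by the same difference-quotient tools and satisfy \eqref{psi-eqn-est}.

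The main obstacle is the $L^\infty$ estimation of $Q_{hl}(\varphi, \D_x\varphi)$ and $Q_{hh}(\varphi, \D_x\varphi)$. These errors are only \emph{quadratic} in $\varphi$, yet the target \eqref{psi-eqn-est} demands a quadratic-in-$B$ bound for $\|\D_x f\|_{L^\infty}$; this is possible only because the null structure exploited in Lemma~\ref{l:toy-red} allows $F(\dq^y\varphi)$ to be treated as genuinely bilinear in $\dq^y\varphi$, so that combined with Lemma~\ref{Trilinear integral estimate} one may spread derivatives evenly across three high-frequency factors rather than placing a full derivative on a single one.
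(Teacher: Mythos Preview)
Your approach matches the paper's and the mechanics are correct, but your closing paragraph contains a conceptual confusion worth flagging. You write that $Q_{hl}(\varphi,\D_x\varphi)$ and $Q_{hh}(\varphi,\D_x\varphi)$ are ``only quadratic in $\varphi$'' and that the $B^2$ bound requires the null structure of Lemma~\ref{l:toy-red}. Neither claim is right. These terms are genuinely \emph{cubic} in $\varphi$, precisely because $F(0)=F'(0)=0$ forces $F(\dq^y\varphi)$ to vanish to second order; this cubic structure alone (which you yourself invoke earlier) is what allows the trilinear estimate of Lemma~\ref{Trilinear integral estimate} to spread derivatives across three factors and produce $B^2$. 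The null structure \eqref{resonance} and Lemma~\ref{l:toy-red} concern the identification $Q(\varphi,v)\approx\Omega(\psi,v)$ and enter only through Lemma~\ref{l:q-principal} in decomposing $Q_{lh}$; they play no role in the estimation of $Q_{hl}$ or $Q_{hh}$. The paper accordingly just records that these terms have a favorable balance of derivatives and satisfy \eqref{psi-eqn-est} directly.

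The ``essentially quadratic'' language you may be recalling from Section~\ref{s:reduction} refers to the \emph{linearized} setting $Q_{hl}(\varphi,v)$ with $v$ independent of $\varphi$, where from the standpoint of derivative balance one cannot exploit the second-order vanishing of $F$ to gain a second copy of the high-frequency variable $v$. In the nonlinear setting $v=\D_x\varphi$ considered here, all three difference-quotient factors are built from $\varphi$, and the cubic count is genuine. For the $\psi$ part, the paper proceeds in two steps---first establishing the equation for $T_{F'(\varphi_x)}\varphi$ by commuting the paracoefficient through \eqref{psi-eqn}, then passing to $\psi$ via the Moser estimate---which is equivalent to your direct substitution but organizes the commutator bookkeeping somewhat more cleanly.
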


\begin{proof}

We first consider the case of $\varphi$. We paradifferentially decompose $Q(\varphi, \D_x \varphi)$ in \eqref{SQG} to write it in terms of the paradifferential equation \eqref{paradiff-eqn} with source,
\[
\D_t\varphi - 2 \logd \D_x \varphi - Q_{lh}(\varphi, \D_x \varphi) = Q_{hl}(\varphi, \D_x \varphi) + Q_{hh}(\varphi, \D_x \varphi).
\]

As with the linearized equation, we estimate the source terms. We see directly from definition that $Q_{hl}(\varphi, \D_x \varphi)$ has a favorable balance of derivatives which satisfies \eqref{psi-eqn-est} and may be absorbed into $f$. The balanced $Q_{hh}$ term similarly satisfies \eqref{psi-eqn-est}, so we have thus replaced $Q(\varphi, \D_x \varphi)$ in \eqref{SQG} with $Q_{lh}(\varphi, \D_x \varphi)$. In turn, it then suffices to apply Lemma~\ref{l:q-principal} to obtain \eqref{psi-eqn}.

\

We next reduce the equation for $T_{F'(\varphi_x)} \varphi$ to that of $\varphi$. It suffices to apply the paracoefficient $T_{F'(\varphi_x)}$ to \eqref{psi-eqn}, and estimate the commutators. This is straightforward for the spatial paradifferential terms, applying Lemma~\ref{l:para-com} and observing a favorable balance of derivatives.

For the time derivative, we substitute \eqref{psi-eqn} for the time derivative of $\varphi$:
\[
T_{F''(\varphi_x) \D_x \D_t\varphi} \varphi = T_{F''(\varphi_x) (2\D_x T_{1 - \D_x \psi} \logd \D_x\varphi + 2\D_x T_{(\logd - 1) \D_x \psi + R} \D_x \varphi + \D_x f)} \varphi.
\]
The estimate \eqref{psi-eqn-est} on $\D_x f$ in the paracoefficient implies that its contribution in this context also satisfies \eqref{psi-eqn-est}. For the remaining terms, the favorable balance of derivatives, with two or more derivatives on the low frequency paracoefficient, again implies that we may absorb their contribution into $f$. 

\

To conclude the proof for $\psi$, it suffices to apply the Moser estimate of Theorem~\ref{t:moser}, other than for the time derivative, for which we need to estimate 
\[
\D_x^{-1} (F'(\varphi_x) \D_x \D_t\varphi) - \D_t T_{F'(\varphi_x)} \varphi.
\]
We decompose this into
\[
[\D_x^{-1}, T_{F'(\varphi_x)}] \D_x \D_t\varphi
\]
which we estimate directly, using the favorable balance of derivatives, and 
\[
\D_x^{-1} T_{\D_x \D_t\varphi} F'(\varphi_x) + \D_x^{-1}\Pi(\D_x \D_t\varphi, F'(\varphi_x))
\]
which is similar to the time derivative commutation in the previous reduction.

\end{proof}

\section{Reduction to the paradifferential equation}\label{s:reduction}

Our objective in this section is to reduce the energy estimates and well-posedness of the linearized equation \eqref{linearized-eqn} to that of the inhomogeneous paradifferential equation \eqref{paradiff-eqn},
\[
\D_tv - 2\logd \D_xv - \D_x Q_{lh}(\varphi, v) = f.
\]
To ensure that the energy estimates are balanced, we require that the inhomogeneity $f$ satisfies balanced cubic estimates.

However, the paradifferential errors $Q_{hl}(\varphi, v)$ and $Q_{hh}(\varphi, v)$ are essentially quadratic rather than cubic, and in particular do not satisfy balanced cubic estimates. On the other hand, we saw in \eqref{atoq} that up to leading order and a low frequency coefficient, $Q$ is the quadratic form associated to the resonance function for the dispersion relation of \eqref{SQG}. This motivates the normal form change of variables
\begin{equation}\label{full-nf}
\tilde v = v - \D_x (\psi v), \qquad \psi = \D_x^{-1} F(\varphi_x).
\end{equation}

However, \eqref{full-nf} suffers from two shortcomings:
\begin{enumerate}
\item We cannot make use of \eqref{full-nf} directly, as it is an unbounded transformation, and
\item quartic (essentially cubic) residuals in the equation for $\tv$ given by \eqref{full-nf} are still unbalanced.
\end{enumerate}
To address the first shortcoming, we instead consider a bounded paradifferential component of \eqref{full-nf},
\begin{equation}\label{para-nf}
\tilde v = v - \D_x (T_v \psi) - \D_x \Pi(v, \psi)
\end{equation}
which is compatible with our objective of reducing to the paradifferential equation \eqref{paradiff-eqn}. To address the second shortcoming, we refine \eqref{para-nf} with a low frequency Jacobian coefficient which addresses the quartic and higher order residuals:
\begin{equation}\label{j-para-nf}
\tilde v = v - \D_x T_{T_J v} \psi - \D_x \Pi(T_J v, \psi), \qquad J = (1 - \D_x \psi)^{-1}.
\end{equation}

\begin{proposition}\label{Linearized normalized variable}
Consider a solution $v$ to \eqref{linearized-eqn}. Then we have
\begin{equation}\label{paradiff-eqn-inhomog}
\D_t\tv - 2\logd \D_x\tv - \D_x Q_{lh}(\varphi, \tv) = \tilde f,
\end{equation}
where $\tilde f$ satisfies balanced cubic estimates,
\begin{equation}\label{bal-est}
\|\tilde f\|_{L^2} \lesssim_A B^2 \|v\|_{L^2}.
\end{equation}
\end{proposition}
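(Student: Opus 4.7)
The plan is to compute the action of the operator $\LL := \D_t - 2\logd\D_x - \D_x Q_{lh}(\varphi, \cdot)$ on the normal form variable $\tv$ defined in \eqref{j-para-nf} and verify that every resulting residual satisfies the balanced cubic bound \eqref{bal-est}. Using the paralinearization \eqref{paralin} of $Q$, the linearized equation \eqref{linearized-eqn} rewrites as
\[
\LL v = \D_x Q_{hl}(\varphi, v) + \D_x Q_{hh}(\varphi, v).
\]
The high-high term $\D_x Q_{hh}(\varphi, v)$ is balanced and cubic by construction, and is absorbed into $\tilde f$ directly using Lemma~\ref{Trilinear integral estimate}, Theorem~\ref{t:moser}, and standard paraproduct bounds. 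The algebraic heart of the argument is therefore to show that $\LL$ applied to the correction $\D_x(T_{T_J v}\psi + \Pi(T_J v, \psi))$ reproduces $\D_x Q_{hl}(\varphi, v)$ modulo balanced cubic errors, so that the full difference $\LL\tv$ lies entirely in the balanced cubic class.

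To carry out this cancellation I would first rewrite the correction via the paraproduct algebra identity
\[
T_{T_J v}\psi + \Pi(T_J v, \psi) = T_J v \cdot \psi - T_\psi T_J v,
\]
so that it becomes the bounded paradifferential replacement of the formal full normal form $\tv = v - \D_x(\psi v)$. For the time derivative I substitute the equations of motion from Propositions~\ref{p:quadratic-lin} and~\ref{p:psi-eqn}: $\D_t v$ is replaced by the paradifferential flow plus a source $f_v$ satisfying $\||D|^{-\half} f_v\|_{L^2} \lesssim_A B \|v\|_{L^2}$, and similarly for $\D_t\psi$, while $\D_t J$ is handled by the Moser estimate applied to $J = (1 - \D_x\psi)^{-1}$. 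Because the sources $f_v$ and $f_\psi$ always enter multiplied by a low-frequency $\psi$ or $v$ factor, they contribute balanced cubic residuals directly. The remaining principal terms take the schematic form $\D_x\bigl(\omega(D)(T_J v)\cdot\psi + (T_J v)\cdot\omega(D)\psi - \omega(D)((T_J v)\cdot\psi)\bigr)$ with $\omega(\xi) = 2i\xi\log|\xi|$, and after paraproduct decomposition the resonance identity \eqref{resonance}, together with Lemma~\ref{l:toy-red} and \eqref{qlh-expression-full} of Lemma~\ref{l:q-principal}, shows that they reproduce $\D_x Q_{hl}(\varphi, v) + \D_x Q_{hh}(\varphi, v)$ modulo balanced cubic errors. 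The specific choice $J = (1 - \D_x\psi)^{-1}$ is what makes this identity close at the paradifferential level: the low-frequency paracoefficient $T_{1 - \D_x\psi} = T_{J^{-1}}$ appearing in Proposition~\ref{p:quadratic-lin} must combine with the inserted $T_J$ to give $T_{J^{-1}}T_J = I$ modulo balanced paraproduct errors supplied by Lemma~\ref{l:para-prod}, which is what converts otherwise unbalanced quartic residuals into balanced ones.

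The remaining contribution to $\LL\tv$ is $-\D_x Q_{lh}(\varphi, \text{correction})$, which is already cubic and satisfies the balanced bound \eqref{bal-est} by Lemma~\ref{l:q-principal}, Lemma~\ref{l:toy-red}, and paraproduct estimates, together with paradifferential commutator errors from exchanging $T_\psi$ past $\logd$ and $\D_x$ that are handled by Lemmas~\ref{l:para-com} and~\ref{l:para-prod} using the pointwise control norms $A$ and $B$ and the subprincipal bounds \eqref{l-est}, \eqref{l-est-infty}. The hard part will be the class-by-class bookkeeping of all quartic and higher residuals that arise when the substitutions of Propositions~\ref{p:quadratic-lin} and~\ref{p:psi-eqn} cascade through nested paraproducts and the time derivative of $T_J$; after the Jacobian-selected cancellation described above, each such residual should reduce to a trilinear expression in $\psi$, $v$, and their derivatives with a clear high-low structure, and be bounded by $\lesssim_A B^2\|v\|_{L^2}$ via Lemma~\ref{Trilinear integral estimate}, which will deliver the target estimate \eqref{bal-est}.
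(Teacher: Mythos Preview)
Your overall strategy matches the paper's: apply the paradifferential flow operator to the correction and show that the output cancels $\D_x Q_{hl}+\D_x Q_{hh}$ modulo balanced cubic errors, with the Jacobian $J$ inserted so that $T_{1-\D_x\psi}T_J$ collapses via Lemma~\ref{l:para-prod}. However, there is a genuine gap in how you partition the terms.

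The claim that ``the remaining contribution $-\D_x Q_{lh}(\varphi,\text{correction})$ is already cubic and satisfies the balanced bound \eqref{bal-est}'' is false. Writing $c=\D_x T_{T_J v}\psi$ and using \eqref{qlh-expression-full}, the transport component of $\D_x Q_{lh}(\varphi,c)$ is $2\D_x T_{(\logd-1)\D_x\psi+R}\,c$, which places two full derivatives on the high-frequency $\psi$; schematically it is of size $A\cdot\|\D_x^2 T_{T_J v}\psi\|_{L^2}$, and this is \emph{not} controlled by $B^2\|v\|_{L^2}$ (you only have $\D_x\psi\in C^{\frac12+}$, not $\D_x^2\psi\in L^\infty$). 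Correspondingly, when you substitute \eqref{paradiff-eqn-2} and \eqref{psi-eqn} for $\D_t v$ and $\D_t\psi$ inside the correction, you obtain not only the dispersive pieces $2T_{1-\D_x\psi}\logd\D_x$ that your schematic $\omega(D)$-identity captures, but also the transport pieces $2T_{(\logd-1)\D_x\psi+R}\D_x$, which your outline omits entirely. Neither family is individually balanced; in the paper's proof (step~ii) they are paired and shown to cancel one another after a Leibniz regrouping and Lemmas~\ref{l:para-com}--\ref{l:para-prod}. Without this cancellation your bookkeeping cannot close.

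A secondary inconsistency: you first assert that $\D_x Q_{hh}(\varphi,v)$ is ``absorbed into $\tilde f$ directly,'' but then say the correction reproduces $\D_x Q_{hl}+\D_x Q_{hh}$. Only the second statement is correct; $\D_x Q_{hh}$ satisfies merely the quadratic bound \eqref{quadratic-est}, not \eqref{bal-est}, and must be cancelled by the $\Pi$ component of the correction exactly as the $T$ component cancels $\D_x Q_{hl}$.
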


\begin{proof}

We express $v$ satisfying \eqref{linearized-eqn} in terms of the paradifferential equation \eqref{paradiff-eqn} with source,
\[
\D_tv - 2 \logd \D_xv - \D_x Q_{lh}(\varphi, v) = \D_x Q_{hl}(\varphi, v) + \D_x Q_{hh}(\varphi, v).
\]
Unlike in Proposition~\ref{p:quadratic-lin}, we do not estimate the source terms directly. Instead, we will establish the following cancellation with the contribution from the normal form correction,
\begin{equation}\label{main-cancellation}
\D_t \D_x T_{T_J v} \psi - 2 \logd \D_x^2 T_{T_J v} \psi - \D_x Q_{lh}(\psi, \D_x T_{T_J v} \psi) = \D_x Q_{hl}(\varphi, v) + \tilde f,
\end{equation}
with the analogous relationship for the balanced $\Pi$ component of the correction, with $Q_{hh}$.

To show \eqref{main-cancellation}, we first observe that using Lemma~\ref{l:q-principal}, we may replace $\D_x Q_{lh}$ on the left hand side of \eqref{main-cancellation} by its principal components. The $\Gamma$ error is estimated using the second estimate of \eqref{l-est},
\[
\|\D_x \Gamma\|_{L^2} \lesssim_A B \||D|^\half \D_x T_{T_J v} \psi\|_{L^2} \lesssim_A B^2 \|v\|_{L^2}
\]
and may be absorbed into $\tilde f$. It thus suffices to show
\begin{equation}\label{lh-simplified}
(\D_t - 2\D_x (T_{1 - \D_x \psi} \logd - T_{(\logd - 1)\D_x \psi + R})) \D_x T_{T_J v} \psi = \D_x Q_{hl}(\varphi, v) + \tilde f.
\end{equation}

\

Next, we compute the time derivative in \eqref{lh-simplified}. The case where $\D_t$ falls on the low frequency $J$ may be absorbed into $\tilde f$ due to a favorable balance of derivatives. More precisely, we use \eqref{psi-eqn} to write
\[
\D_t J = J^2 \D_x \D_t \psi = J^2\D_x(2T_{1 - \D_x \psi} \logd \D_x \psi + 2T_{(\logd - 1) \D_x \psi + R} \D_x \psi + f)
\]
so that we can we can estimate for instance the contribution of the first term,
\[
\|\D_x T_{T_{2 J^2 \D_x^2 \logd \psi} v} \psi \|_{L^2} \lesssim_A B^2 \|v\|_{L^2},
\]
with similar estimates for the other contributions, using the first estimate of \eqref{psi-eqn-est} for the contribution of $f$.

In the remaining cases, $\D_t$ falls on the middle frequency $v$ or the high frequency $\psi$, so we use \eqref{paradiff-eqn-2} and \eqref{psi-eqn} respectively to write
\begin{equation}\label{dt-nf}
\begin{aligned}
\D_x T_{T_J \D_t v} \psi &= \D_x T_{T_J(2T_{1 - \D_x \psi} \logd \D_xv + 2T_{(\logd - 1) \D_x \psi + R} \D_x v + f)} \psi, \\
\D_x T_{T_J v} \D_t \psi &= \D_x T_{T_J v} (2T_{1 - \D_x \psi} \logd \D_x\psi + 2T_{(\logd - 1) \D_x \psi + R} \D_x \psi + f_\psi).
\end{aligned}
\end{equation}
We consider the first, second, and third contributions from the two equations of \eqref{dt-nf} in pairs:

\

i) The first terms in \eqref{dt-nf} combine with the second term on the left in \eqref{lh-simplified},
\begin{equation}\label{pre-q}
2\D_x (T_{T_J T_{1 - \D_x \psi} \logd \D_xv} \psi + T_{T_J v} T_{1 - \D_x \psi} \logd \D_x\psi - T_{1 - \D_x \psi} \logd \D_x T_{T_J v} \psi),
\end{equation}
to form $\D_x Q_{hl}(\varphi, v)$, modulo balanced errors which may be absorbed into $\tilde f$. To see this, we will use in each of the three terms that $(1 - \D_x \psi)J = 1$. As we do so, we need to take care that any paraproduct errors and commutators yield balanced errors. 

First, we observe that in the third term, we can apply the commutator estimate
\[
\|\D_x [T_{1 - \D_x \psi}, \logd \D_x] T_{T_J v} \psi \|_{L^2} \lesssim_A B^2 \|v\|_{L^2}.
\]
Then using Lemma~\ref{l:para-prod} to compose paraproducts in each of the three terms of \eqref{pre-q}, we have
\[
2\D_x (T_{\logd \D_xv} \psi + T_{(1 - \D_x \psi) T_J v} \logd \D_x\psi -  \logd \D_x T_{(1 - \D_x \psi)T_J v} \psi).
\]

For the latter two terms, we will also use Lemma~\ref{l:para-prod} to compose paraproducts, before applying $(1 - \D_x \psi)J = 1$. To do so, we first need to exchange multiplication by $(1 - \D_x \psi)$ with the paraproduct $T_{1 - \D_x \psi}$. However, the error from this exchange is not directly perturbative. Instead, we perform the exchange for the two terms simultaneously, to observe a cancellation in the form of the commutator
\[
2\D_x(T_{T_{T_J v} (1 - \D_x \psi)} \logd \D_x\psi -  \logd \D_x T_{T_{T_J v}(1 - \D_x \psi)} \psi),
\]
which has a favorable balance of derivatives and may be absorbed into $\tilde f$. The same holds for the analogous cases with $\Pi(1 - \D_x \psi, T_J v)$. We have thus reduced \eqref{pre-q} to 
\[
2\D_x (T_{\logd \D_xv} \psi + T_{v} \logd \D_x\psi -  \logd \D_x T_{v} \psi) = \D_x \Omega_{hl}(\psi, v)
\]
which by Lemma~\ref{l:toy-red} coincides with $\D_x Q_{hl}(\varphi, v)$ up to balanced errors, as desired.

\

ii) The second terms in \eqref{dt-nf},
\begin{equation}\label{second-term}
2\D_x (T_{T_J T_{(\logd - 1) \D_x \psi + R} \D_x v} \psi + T_{T_J v} T_{(\logd - 1) \D_x \psi + R} \D_x \psi),
\end{equation}
combine to cancel the third term on the left hand side of \eqref{lh-simplified}, up to balanced errors. To see this, we apply the commutator Lemma~\ref{l:para-com} to exchange the first term of \eqref{second-term} with
\[
2\D_x T_{T_{(\logd - 1) \D_x \psi + R} T_J \D_x v} \psi.
\]
We can freely exchange the low frequency paraproduct $T_{(\logd - 1) \D_x\psi + R}$ with a standard product, since
\begin{equation}\label{favorable-bal}
\|\D_x T_{T_{T_J \D_x v} (\logd - 1) \D_x \psi + T_{T_J \D_x v} R} \psi\|_{L^2} \lesssim_A B^2 \|v\|_{L^2}
\end{equation}
and likewise for the balanced $\Pi$ case. We thus have 
\[
2\D_x T_{T_J \D_x v \cdot (\logd - 1) \D_x \psi + R} \psi.
\]
Then applying Lemma~\ref{l:para-prod} for splitting paraproducts, and returning to \eqref{second-term}, we arrive at
\[
2\D_x (T_{T_J \D_x v} T_{(\logd - 1) \D_x \psi + R} \psi + T_{T_J v} T_{(\logd - 1) \D_x \psi + R} \D_x \psi).
\]
Lastly, we factor out a derivative,
\[
2\D_x^2 T_{T_J v} T_{(\logd - 1) \D_x \psi + R} \psi
\]
where we have absorbed the cases where the factored derivative falls on $J$ or $(\logd - 1) \D_x \psi + R$ into $\tilde f$, similar to \eqref{favorable-bal}. After one more instance of the commutator Lemma~\ref{l:para-com}, we arrive at the third term on the left hand side of \eqref{lh-simplified} as desired.

\

iii) By Propositions~\ref{p:quadratic-lin} and \ref{p:psi-eqn} respectively, the contributions from $f$ and $f_\psi$ satisfy \eqref{bal-est} and may be absorbed into $\tilde f$.

\end{proof}

We also obtain a similar but easier balanced estimate for the reduction of the nonlinear equation to the paradifferential flow, in the $\dot H^s$ setting. Here the normal form correction consists only of a balanced $\Pi$ component:

\begin{equation}\label{j-para-nf-2}
\tilde{\varphi} = \varphi -  \Pi(\psi,T_J \D_x\varphi).
\end{equation}

\begin{proposition}\label{Normalized variable}
Consider a solution $\varphi$ to \eqref{SQG}. Then we have
\begin{equation}\label{paradiff-eqn-inhomog-2}
\D_t\tilde{\varphi} - 2\logd \D_x\tilde{\varphi} -  \D_x Q_{lh}(\varphi, \tilde{\varphi}) = \tilde f,
\end{equation}
where $\tilde f$ satisfies balanced cubic estimates,
\begin{equation}\label{bal-est-2}
\|\tilde f\|_{\dot{H}^s} \lesssim_A B^2 \|\varphi\|_{\dot{H}_x^s}.
\end{equation}
\end{proposition}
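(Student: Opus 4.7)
The plan is to mirror the proof of Proposition~\ref{Linearized normalized variable}, but to exploit the fact that in the nonlinear $\dot H^s$ setting (with $s>2$), the unbalanced low-high term $Q_{hl}(\varphi,\D_x\varphi)$ is already perturbative. Thus only the balanced high-high contribution $Q_{hh}(\varphi,\D_x\varphi)$ must be absorbed by the normal form correction, which explains why \eqref{j-para-nf-2} contains only the $\Pi$ component.

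First I would paradifferentially expand $Q(\varphi,\D_x\varphi)$ according to \eqref{paralin}, and move the $\D_x$ outside $Q_{lh}$. This picks up a commutator $\int T_{\D_x F(\dq^y\varphi)}\sdq^y\varphi\,dy$ with the extra derivative on the low-frequency coefficient, which is balanced by Moser and Lemma~\ref{Trilinear integral estimate}. For the remaining $Q_{hl}(\varphi,\D_x\varphi)=\int T_{\sdq^y\D_x\varphi}F(\dq^y\varphi)\,dy$, the low-frequency paracoefficient $\sdq^y\D_x\varphi$ lies in $L^\infty$ with bound controlled by $B$, so the standard paraproduct estimate $\|T_ab\|_{\dot H^s}\lesssim\|a\|_{L^\infty}\|b\|_{\dot H^s}$ combined with a Moser estimate on $F$ (which vanishes to second order at $0$) and Lemma~\ref{Trilinear integral estimate} yields $\|Q_{hl}(\varphi,\D_x\varphi)\|_{\dot H^s}\lesssim_A B^2\|\varphi\|_{\dot H^s}$. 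This allows $Q_{hl}(\varphi,\D_x\varphi)$ to be absorbed directly into $\tilde f$.

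The main step is then to verify the cancellation
\[
\bigl(\D_t - 2\logd\D_x - \D_x Q_{lh}(\varphi,\cdot)\bigr)\Pi(\psi,T_J\D_x\varphi) = Q_{hh}(\varphi,\D_x\varphi) + \tilde f,
\]
following part (i) of the proof of Proposition~\ref{Linearized normalized variable} adapted to the $\Pi$ component. I would apply Lemma~\ref{l:q-principal} to replace $Q_{lh}$ by its principal components, absorbing the $\Gamma$ remainder via \eqref{l-est}, and then expand the time derivatives of $\psi$ and $\D_x\varphi$ using Proposition~\ref{p:psi-eqn}. Cases where $\D_t$ lands on the low-frequency factor $J$ are balanced by the favorable derivative count, and subprincipal source contributions from \eqref{psi-eqn} are controlled by \eqref{psi-eqn-est}. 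The surviving principal terms combine with $-2\logd\D_x\Pi(\psi,T_J\D_x\varphi)$ and the principal $Q_{lh}$ contributions; using the identity $(1-\D_x\psi)J=1$ together with Lemmas~\ref{l:para-com} and \ref{l:para-prod}, the expression collapses to $\Omega_{hh}(\psi,\D_x\varphi)$, which by Lemma~\ref{l:toy-red} matches $Q_{hh}(\varphi,\D_x\varphi)$ modulo balanced errors.

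The main obstacle will be the paradifferential bookkeeping: ensuring that all commutators, paraproduct compositions, and exchanges between ordinary multiplication and paramultiplication yield balanced remainders. The one exchange that is not individually balanced is that between $T_{1-\D_x\psi}$ and multiplication by $1-\D_x\psi$; as in step (i) of Proposition~\ref{Linearized normalized variable}, this must be handled by pairing the two contributions in which it arises, so that the unbalanced parts cancel against each other and only a balanced commutator remains.
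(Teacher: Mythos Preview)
Your proposal is correct and follows essentially the same approach as the paper: the paper's proof is a brief sketch that reduces to the observation that $Q_{hl}(\varphi,\D_x\varphi)$ is already perturbative (so only the $\Pi$ normal form correction is needed for $Q_{hh}$), then defers to the balanced $\Pi$ analysis from Proposition~\ref{Linearized normalized variable}, noting that the $\dot H^s$ setting allows one to place the $s$ derivatives on either high-frequency factor. You have simply filled in more of the details than the paper provides.
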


\begin{proof}

First observe that we have
\[
\D_t\varphi - 2\logd \D_x \varphi -  \D_x Q_{lh}(\varphi, \varphi) = Q_{hh}(\varphi, \D_x \varphi).
\]
Then the normal form analysis is similar to the analysis for the (balanced) paradifferential errors of the linear equation in Proposition~\ref{Linearized normalized variable}.

To see that we can obtain balanced estimates in the $\dot H^s$ setting, first observe that in each of the estimates in the proof of Proposition~\ref{Linearized normalized variable}, we can easily obtain at least one $B$ from the estimate of the low frequency variable. Then since we are in the balanced $\Pi$ setting, we can obtain a second $B$, with $s$ outstanding derivatives, which can then be placed on the remaining high frequency factor.

\end{proof}

\section{Energy estimates for the paradifferential equation}\label{s:energy estimates}

In this section we establish energy estimates for the paradifferential equation \eqref{paradiff-eqn}. We define the modified energy
\[
E(v) := \int v \cdot T_{1 - \D_x \psi} v \, dx.
\]

\begin{proposition}\label{Paradiferential flow linearized energy estimates}
We have
\begin{equation}\label{bal-energy}
\frac{d}{dt} E(v) \lesssim_A B^2 \|v\|_{L^2}^2 + \|f\|_{L^2} \|v\|_{L^2}.
\end{equation}

\end{proposition}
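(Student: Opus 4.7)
The plan is to differentiate $E(v)$ directly in time and substitute the paradifferential equation. Since $\D_x\psi$ is real, $T_{1-\D_x\psi}$ is self-adjoint, so
\begin{equation*}
\frac{d}{dt}E(v) = 2\langle \D_t v, T_{1-\D_x\psi}v\rangle - \langle v, T_{\D_x\D_t\psi}v\rangle.
\end{equation*}
Substituting $\D_t v = 2\logd\,\D_x v + \D_x Q_{lh}(\varphi, v) + f$ and expanding $\D_x Q_{lh}$ via Lemma~\ref{l:q-principal}, the inhomogeneity $f$ immediately yields the $\|f\|_{L^2}\|v\|_{L^2}$ contribution after Cauchy--Schwarz and the $L^2$ boundedness of $T_{1-\D_x\psi}$.

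The heart of the argument is a cancellation in the principal dispersive and transport pieces. Exploiting the skew-adjointness of $\logd\,\D_x$, the leading dispersive contribution $4\langle \logd\,\D_x v, T_{1-\D_x\psi}v\rangle$ equals $-2\langle v, [\logd\,\D_x, T_{1-\D_x\psi}]v\rangle$, whose leading paradifferential symbol contribution gives $-2\langle v, T_{\logd\,\D_x^2\psi}v\rangle$. On the other hand, applying Proposition~\ref{p:psi-eqn} to $\psi$ we have $\D_x\D_t\psi = 2\logd\,\D_x^2\psi + g$ with $\|g\|_{L^\infty}\lesssim_A B^2$, so $\langle v, T_{\D_x\D_t\psi}v\rangle$ matches $2\langle v, T_{\logd\,\D_x^2\psi}v\rangle$ modulo a balanced error of size $B^2\|v\|_{L^2}^2$. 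These two contributions cancel exactly at leading order, reflecting the fact that $1-\D_x\psi$ is approximately transported by the linearized flow. A parallel treatment handles the lower-order transport term $-4\langle \D_x T_{\D_x\psi}\logd\,v, T_{1-\D_x\psi}v\rangle$: compose the two paraproducts via Lemma~\ref{l:para-prod}, extract another skew-symmetric piece from $\logd\,\D_x$, and bound the residual para-commutators with Lemma~\ref{l:para-com}, using $\D_x\psi$ in the $C^{\half+}$ norm controlled by $B$.

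The remaining terms are estimated directly. The zero-order dispersive term $4\langle \D_x T_{(\logd -1)\D_x\psi + R}v, T_{1-\D_x\psi}v\rangle$ already has a favorable balance of derivatives, with one or more derivatives on the low-frequency coefficient $\D_x\psi$, and is bounded by $B^2\|v\|_{L^2}^2$ after paraproduct composition and commutation of $\D_x$. The subprincipal $\Gamma$ contribution $2\langle \D_x\Gamma, T_{1-\D_x\psi}v\rangle$ is handled by integrating by parts, using $[\D_x, T_{1-\D_x\psi}] = T_{-\D_x^2\psi}$, and pairing the resulting terms using the two bounds of \eqref{l-est} in complementary ways; when necessary, the remaining $|D|^{\half}$ imbalance is absorbed into the analogous subprincipal piece of the commutator $[\logd\,\D_x, T_{1-\D_x\psi}]$ from the cancellation step.

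The main obstacle will be computing the commutator $[\logd\,\D_x, T_{1-\D_x\psi}]$ precisely enough at the subprincipal level to match $T_{\D_x\D_t\psi}$ exactly through $O(B^2)$ paradifferential errors. The logarithmic symbol of the dispersive operator complicates the usual paradifferential symbol calculus, and since $\D_x\psi$ is controlled only in $C^{\half+}$, the sharp form of Lemmas~\ref{l:para-com} and~\ref{l:para-prod} is essential to keep every remainder in the balanced cubic class.
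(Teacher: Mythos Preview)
Your proposed cancellation scheme at the cubic level is incorrect, and the error originates in the computation of $[\logd\,\D_x,T_{1-\D_x\psi}]$. For a Fourier multiplier $m(D)$, the leading part of $[m(D),T_g]$ has symbol $m'(\eta)(\xi-\eta)$, i.e.\ $T_{\D_x g}\,m'(D)$; it is \emph{not} $T_{m(D)g}$ unless $m$ is linear. Concretely, $[\logd\,\D_x,T_{\D_x\psi}]=T_{\D_x^2\psi}(\logd+1)+\text{lower}$, with the logarithm acting on the high-frequency input $v$, not on the coefficient. Thus the commutator form of your term (A) does \emph{not} produce $T_{\logd\,\D_x^2\psi}$, and it cannot cancel directly against (D)$=-\langle v,T_{\D_x\D_t\psi}v\rangle\approx-2\langle v,T_{\logd\,\D_x^2\psi}v\rangle$. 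In the paper's proof the two unbalanced cubic pieces are $\langle T_{\logd\,\D_x^2\psi}v,v\rangle$ and $2\langle T_{\D_x\psi}\logd\,\D_x v,v\rangle$ (see \eqref{cubic-terms}), and they are removed by the two \emph{different} contributions of the energy weight: the first by the $T_{\D_x\D_t\psi}$ term, the second by the pairing $-2\langle\logd\,\D_x v,T_{\D_x\psi}v\rangle$. Your scheme swaps these roles.

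A related casualty is your claim that $4\langle\D_x T_{(\logd-1)\D_x\psi+R}v,T_{1-\D_x\psi}v\rangle$ is directly bounded: its identity-paired part gives $2\langle T_{\logd\,\D_x^2\psi}v,v\rangle$ after cyclic integration by parts, and $\|\logd\,\D_x^2\psi\|_{L^\infty}$ is not controlled by $B^2$ (it would require $\varphi\in C^{2+}$, not $C^{3/2+}$). So this term must participate in a cancellation, not be estimated outright. Finally, after the cubic cancellations the quartic remainder \eqref{quartic-terms} is itself not trivially balanced; the paper handles it in two further steps, the second of which relies on recognizing a skew-adjoint operator $L(\D_x^2\psi,\cdot)$ so that the pairing with $T_{\D_x\psi}$ becomes a commutator. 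None of this is captured by ``compose paraproducts and bound residual para-commutators.''
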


\begin{proof}
Without loss of generality we assume $f = 0$. We consider first the linear component of the energy. Using the equation \eqref{paradiff-eqn} for $v$ and skew adjointness of $\logd \D_x$, we have
\[
\half \frac{d}{dt} \int v \cdot v \, dx = \int (2\logd \D_xv + \D_x Q_{lh}(\varphi, v)) \cdot v \, dx = \int \D_x Q_{lh}(\varphi, v) \cdot v \, dx.
\]
Using Lemma~\ref{l:q-principal} to expand $Q_{lh}$, this may be written
\[
\int 2\D_x(T_{\logd \D_x  \psi + R} v + T_{\psi}\logd \D_x  v - \logd \D_x T_\psi v) \cdot v \, dx.
\]
Cyclically integrating by parts (the first term individually, and the latter two terms paired), this may be expressed as
\begin{equation}\label{cubic-terms}
\int (T_{\D_x^2 \logd  \psi + \D_x R} v + 2T_{\D_x \psi}\logd \D_x  v) \cdot v \, dx.
\end{equation}
Here, the contribution with $\D_x R$ is balanced by Lemma~\ref{l:toy-red} and may be discarded.

\

Next we evaluate the effect of the quasilinear modification to the energy, using \eqref{paradiff-eqn} and \eqref{psi-eqn} respectively to expand time derivatives:
\begin{equation}
\begin{aligned}
\half \frac{d}{dt} \int v \cdot T_{\D_x \psi} v \, dx &= \int \D_t v \cdot T_{\D_x \psi} v \, dx +\half \int v \cdot T_{\D_x \D_t\psi} v \, dx \\
&= \int (2\logd \D_xv + \D_x Q_{lh}(\varphi, v)) \cdot T_{\D_x \psi} v \, dx \\
&\quad + \half \int v \cdot T_{\D_x (2T_{1 - \D_x \psi} \logd \D_x \psi + 2T_{(\logd - 1) \D_x \psi + R} \D_x \psi + f)} v \, dx.
\end{aligned}
\end{equation}
The contribution from $f$ may be estimated using \eqref{psi-eqn-est} and discarded. The cubic terms cancel with \eqref{cubic-terms}, so it remains to estimate the quartic terms,
\begin{equation}\label{quartic-terms}
\int \D_x Q_{lh}(\varphi, v) \cdot T_{\D_x \psi} v + v \cdot T_{\D_x (T_{(\logd - 1) \D_x \psi + R} \D_x\psi - T_{\D_x \psi} \logd \D_x \psi)} v \, dx.
\end{equation}

We expand $Q_{lh}$ in the first term of \eqref{quartic-terms}, and will observe cancellations in two steps with the remaining terms. The expansion is similar to the expansion of the $L^2$ energy above, except with an additional $T_{\D_x \psi}$ paraproduct:
\begin{equation}\label{quartic-terms-2}
\int 2\D_x (T_{\logd \D_x \psi + R} v + T_{\psi}\logd \D_x  v - \logd \D_x T_\psi v) \cdot T_{\D_x \psi} v \, dx.
\end{equation}

i) From the first term in \eqref{quartic-terms-2}, we have after cyclically integrating by parts, 
\[
\int v \cdot T_{\D_x^2 \logd \psi + \D_x R}T_{\D_x \psi} v - v \cdot T_{\logd \D_x  \psi + R}T_{\D_x^2 \psi} v + v \cdot [T_{\D_x \psi}, T_{\logd \D_x  \psi + R}]\D_x v \, dx.
\]
The commutator satisfies \eqref{bal-energy} by Lemma~\ref{l:para-com}, up to errors also satisfying \eqref{bal-energy}, as well as the non-perturbative residual
\begin{equation}\label{residual-q}
-\int v \cdot T_{\D_x T_{\D_x \psi} \D_x\psi} v \, dx
\end{equation}
which we will address in the next step. To see these cancellations, we use Lemma~\ref{l:para-prod} to compose paraproducts, and observe that any contribution with two or more derivatives on the lowest frequency has a favorable balance of derivatives and satisfies \eqref{bal-energy}. For instance, from the second term of \eqref{quartic-terms}, we have the perturbative component
\[
\left|\int v \cdot T_{T_{\D_x^2\logd \psi} \D_x\psi} v \, dx \right| \lesssim_A B^2 \|v\|_{L^2}.
\]

ii) It remains to estimate the last two terms in \eqref{quartic-terms-2}, along with \eqref{residual-q}. In the last term of \eqref{quartic-terms-2}, the case where the $\D_x$ falls on the low frequency $\psi$ vanishes by skew adjointness. From what remains, we have the commutator
\[
-2 \int [T_\psi, \logd] \D_x v \cdot \D_x T_{\D_x \psi} v \, dx = 2 \int T_{\D_x \psi} \D_x [T_\psi, \logd] \D_x v \cdot v \, dx.
\]
 On the other hand, due to skew adjointness, we are free to insert
\[
\int 2 T_{\D_x \psi} \D_x T_{\D_x \psi} v \cdot v \, dx = 0
\]
which subtracts the principal component of the commutator. In addition, we can rewrite \eqref{residual-q}, up to perturbative errors, as
\[
-\int T_{\D_x \psi} T_{\D_x^2\psi} v \cdot v \, dx.
\]
Combined, we have
\begin{equation}\label{l-commutator}
\int T_{\D_x \psi} L(\D_x^2 \psi, v) \cdot v \, dx
\end{equation}
where
\[
L(\D_x^2 \psi, v) = (2\D_x [T_\psi, \logd] \D_x + 2\D_x T_{\D_x \psi} - T_{\D_x^2 \psi}) v.
\]
Since the components 
\[
2\D_x [T_\psi, \logd] \D_x, \qquad 2\D_x T_{\D_x \psi} - T_{\D_x^2 \psi}
\]
of $L(\D_x^2 \psi, \cdot)$ are both skew-adjoint, $L(\D_x^2 \psi, \cdot)$ is skew-adjoint as well. We thus have a commutator form for \eqref{l-commutator},
\[
\int T_{\D_x \psi} L(\D_x^2 \psi, v) \cdot v \, dx = \half \int (T_{\D_x \psi} L(\D_x^2 \psi, v) -  L(\D_x^2 \psi, T_{\D_x \psi} v)) \cdot v \, dx
\]
for which we have the desired balanced estimate.

\end{proof}
By combining this result with the normal form analysis from the previous section, we obtain the following well-posedness result:
\begin{proposition}\label{Linearized equation energy estimates}
Assume that $A\ll 1$ and $B\in L_t^2$. There exists an energy functional $E_{lin}(v)$ such that for every solution of \eqref{linearized-eqn}, we have the following:
\begin{enumerate}
    \item[a)]Norm equivalence:
    \begin{align*}
        E_{lin}(v)\approx_A\|v\|^2_{L_x^2}
    \end{align*}
    \item[b)]Energy estimates:
    \begin{align*}
        \frac{d}{dt}E_{lin}(v)\lesssim _{A}B^2\|v\|^2_{L_x^2}
    \end{align*}
\end{enumerate}
\end{proposition}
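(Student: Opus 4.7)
My approach is to define the linearized energy functional in terms of the normal form variable, and then combine the normal form reduction of Proposition~\ref{Linearized normalized variable} with the balanced energy estimate of Proposition~\ref{Paradiferential flow linearized energy estimates}. Concretely, I set
\[
E_{lin}(v) := E(\tv) = \int \tv \cdot T_{1 - \D_x\psi}\tv\,dx,
\]
where $\tv = v - \D_x T_{T_J v}\psi - \D_x\Pi(T_J v, \psi)$ is the paradifferential normal form variable from \eqref{j-para-nf}. The point is that $\tv$ satisfies the paradifferential flow \eqref{paradiff-eqn-inhomog} with balanced cubic source $\tilde f$.

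The first step is norm equivalence. Under the smallness assumption $A \ll 1$, I have $\|\D_x \psi\|_{L^\infty} \lesssim A$, so $T_{1-\D_x\psi}$ is a bounded, self-adjoint, coercive perturbation of the identity on $L^2$, giving $E(\tv) \approx \|\tv\|_{L^2}^2$. For the correction, both $\D_x T_{T_J v}\psi$ and $\D_x\Pi(T_J v,\psi)$ involve the derivative falling on $\psi$ (or, after a Leibniz expansion, on $J$ which depends only on $\D_x\psi$), so paraproduct estimates combined with $\|\D_x \psi\|_{L^\infty} \lesssim A$ yield $\|\tv - v\|_{L^2} \lesssim A \|v\|_{L^2}$. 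This gives $\|\tv\|_{L^2} \approx_A \|v\|_{L^2}$, and hence $E_{lin}(v) \approx_A \|v\|^2_{L^2}$ as required.

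For the energy estimate, I differentiate in time and apply Proposition~\ref{Paradiferential flow linearized energy estimates} to $\tv$:
\[
\frac{d}{dt} E_{lin}(v) = \frac{d}{dt}E(\tv) \lesssim_A B^2 \|\tv\|_{L^2}^2 + \|\tilde f\|_{L^2}\|\tv\|_{L^2}.
\]
Using the balanced bound $\|\tilde f\|_{L^2} \lesssim_A B^2 \|v\|_{L^2}$ from \eqref{bal-est} together with the norm equivalence $\|\tv\|_{L^2} \approx_A \|v\|_{L^2}$, both terms on the right are dominated by $B^2 \|v\|_{L^2}^2 \approx_A B^2 E_{lin}(v)$, which gives part (b).

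The main obstacle is ensuring that the normal form correction is genuinely bounded and yields a small perturbation of identity in $L^2$ under the assumption $A \ll 1$ alone; the delicate point is that the correction contains an outer $\D_x$, so one must verify that after Leibniz the derivative always lands on a low-frequency paracoefficient ($\psi$ or, through $J = (1-\D_x\psi)^{-1}$, ultimately on $\D_x\psi$), where it is absorbed by $A$. The hypothesis $B \in L^2_t$ is not needed for the instantaneous inequality itself, but makes the energy estimate integrable in time via Gronwall, which is what underlies the well-posedness statement.
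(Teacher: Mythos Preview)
Your proposal is correct and follows essentially the same approach as the paper: define $E_{lin}(v) = E(\tv)$ with $\tv$ the normal form variable from Proposition~\ref{Linearized normalized variable}, obtain the norm equivalence from $A\ll 1$ and the smallness of the correction, and derive the energy estimate by applying Proposition~\ref{Paradiferential flow linearized energy estimates} together with the balanced source bound \eqref{bal-est}. The paper's proof is simply the two-line version of what you wrote.
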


\begin{remark}
It actually turns out that the linearized equation \eqref{linearized-eqn} is well-posed in $L_x^2$. We are not going to use this property in our paper, but we briefly discuss the key ideas behind its proof. The main point is to obtain a similar estimate for the adjoint equation, interpreted as a backward evolution in the space $L^2$. Namely, the adjoint equation corresponding to the linearized one has the form
\begin{align*}
    \partial_tv-2\log|D_x|\partial_xv-Q(\varphi,\partial_xv)=0
\end{align*}
By carrying out a paradifferential normal form transformation, akin to the one from the proof of Proposition \ref{Linearized normalized variable}, we reduce this to the equation
\begin{align*}
    \partial_tv-2\log|D_x|\partial_xv-Q_{lh}(\varphi,\partial_xv)=0.
\end{align*}
By considering the modified energy functional
\begin{align*}
  \int v\cdot T_{\frac{1}{1-\psi_x}}v\,dx, 
\end{align*}
and carrying out an analysis similar to the one from the proof of Proposition \ref{Paradiferential flow linearized energy estimates}, we obtain the desired energy estimate for the dual problem. Now the existence follows by a standard duality argument (for the general theory, see Theorem 23.1.2 in H\" ormander \cite{Hormander}).
\end{remark}
\begin{proof}
Let $E_{lin}(v)=E(\tilde{v})$, where $E(\cdot)$ is defined in Proposition \ref{Paradiferential flow linearized energy estimates}, and $\tilde{v}$ is defined in Proposition \ref{Linearized normalized variable}.

Part a) is immediate, whereas part b) follows from Proposition \ref{Paradiferential flow linearized energy estimates}.
\end{proof}

\section{Higher order energy estimates}\label{s:higher order energy estimates}

In this section we establish higher order energy estimates. Extra care must be taken because commutators with $D^s$ are quadratic rather than cubic, and thus require a normal form correction. 

\begin{proposition}\label{High energy normalized variable}
    Let $s\geq 0$. Given $v$ solving \eqref{paradiff-eqn-inhomog}, there exists a normalized variable $v^s$ such that
    \begin{align*}
        \partial_tv^s - 2\logd \partial_xv^s-\partial_xQ_{lh}(\varphi,v^s)= f + \mathcal{R}(\varphi, v),
    \end{align*}
    with
    \begin{align*}
        \|v^s-|D_x|^sv\|_{L_x^2}&\lesssim_AA\|v\|_{\dot{H}_x^s}
    \end{align*}
    and $\mathcal{R}(\varphi)$ satisfying balanced cubic estimates,
\begin{equation}\label{bal-est-eng}
\|\mathcal{R}(\varphi, v)\|_{L^2} \lesssim_A B^2 \|v\|_{L^2}.
\end{equation}
\end{proposition}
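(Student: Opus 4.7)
The plan is to construct $v^s$ in two stages: first, an exponential paradifferential Jacobian conjugation of $|D_x|^s v$ that kills the principal-order quadratic commutator produced by applying $|D_x|^s$ to \eqref{paradiff-eqn-2}, and then a paradifferential normal form correction analogous to \eqref{j-para-nf}, lifted to the $\dot H^s$ setting, which removes the remaining quadratic paradifferential errors.

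As a warm-up, I would apply $|D_x|^s$ directly to \eqref{paradiff-eqn-2}. Since $\D_x\psi = F(\varphi_x)$ is quadratic in $\varphi$, the commutators $[|D_x|^s, 2T_{1-\D_x\psi}\logd\D_x]$ and $[|D_x|^s, 2T_{(\logd-1)\D_x\psi+R}\D_x]$ are quadratic in $\varphi$-norms, but only linear in $B$ in the sense of the balanced control norm, so they fail to meet the $B^2$ benchmark of \eqref{bal-est-eng}. At the symbol level, the leading part of the first commutator has the form $-2s T_{\D_x^2\psi}|D_x|^{s-1}\logd\D_x v$, essentially the infinitesimal action of the transport $T_{1-\D_x\psi}\D_x$ on the symbol $|\xi|^s$. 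To cancel this I would introduce
\[
w^{(0)} := T_{J^\alpha}|D_x|^s v, \qquad J = (1-\D_x\psi)^{-1},
\]
for a real power $\alpha$ determined by the symbolic computation (so that $J^\alpha = \exp(-\alpha \log(1-\D_x\psi))$, explaining the terminology ``exponential Jacobian''). Pushing $T_{J^\alpha}$ through the principal operator yields a cross commutator $-2[T_{1-\D_x\psi}\logd\D_x, T_{J^\alpha}]|D_x|^s v$, while the time derivative hitting $J^\alpha$ produces $T_{\D_t J^\alpha}|D_x|^s v$, which I would expand using Proposition~\ref{p:psi-eqn}. The correct choice of $\alpha$ arranges for these new contributions to combine and exactly cancel the warm-up commutator, up to remainders which are either balanced cubic (controlled via Lemmas~\ref{l:para-com} and~\ref{l:para-prod}) or of the structural form $\D_x Q_{hl}(\varphi, w^{(0)}) + \D_x Q_{hh}(\varphi, w^{(0)})$.

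At this stage $w^{(0)}$ satisfies the paradifferential flow up to a source consisting of $|D_x|^s f$, balanced cubic residuals, and the structural paradifferential errors $\D_x Q_{hl}(\varphi, w^{(0)})$ and $\D_x Q_{hh}(\varphi, w^{(0)})$. These latter errors are precisely the ones killed by the normal form in Proposition~\ref{Linearized normalized variable}. I would therefore define
\[
v^s := w^{(0)} - \D_x T_{T_J w^{(0)}}\psi - \D_x \Pi(T_J w^{(0)}, \psi),
\]
after which the cancellation argument of Proposition~\ref{Linearized normalized variable} applies in the $\dot H^s$ setting to produce the desired equation for $v^s$ with source $f + \mathcal{R}(\varphi, v)$ satisfying \eqref{bal-est-eng}. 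The norm equivalence $\|v^s - |D_x|^s v\|_{L^2}\lesssim_A A\|v\|_{\dot H^s}$ then follows from $\|J^\alpha - 1\|_{L^\infty}\lesssim_A A$ together with Lemma~\ref{l:para-prod} applied to the two components of the normal form correction.

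The main obstacle will be identifying the exact exponent $\alpha$ and verifying the symbolic cancellation between $T_{\D_t J^\alpha}|D_x|^s v$, the cross commutator $[T_{J^\alpha}, T_{1-\D_x\psi}\logd\D_x]|D_x|^s v$, and the principal commutator $[|D_x|^s, 2T_{1-\D_x\psi}\logd\D_x]v$; this is a paradifferential calculus bookkeeping exercise in the spirit of the Alinhac good unknown used in the water waves literature. A secondary technical nuisance is that the logarithmic factor $\logd$ in the dispersion relation requires one to track the subprincipal dispersive correction carefully when composing paradifferential operators, but this fits within the $\Gamma$-remainder framework of Lemma~\ref{l:q-principal} and does not introduce genuinely new difficulties.
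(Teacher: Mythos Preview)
Your first stage---the Jacobian conjugation $w^{(0)} = T_{J^\alpha}|D_x|^s v$---is essentially the paper's step~(a), and the paper's exponent is $\alpha=-s$. However, your second stage rests on a misidentification of the residual errors. Since $v$ already solves the paradifferential flow \eqref{paradiff-eqn-inhomog}, there are no $Q_{hl}$ or $Q_{hh}$ contributions anywhere: every operator on the left-hand side, and every commutator with $|D_x|^s$ or $T_{J^\alpha}$, is a \emph{low--high} paradifferential object (low frequency on $\psi$, high on $v$). The conjugation only cancels the principal (order~$0$) part of the commutators $[|D_x|^s, 2T_{1-\D_x\psi}\logd\D_x]$ and $[|D_x|^s, 2T_{(\logd-1)\D_x\psi+R}\D_x]$; what survives are quadratic, low--high, order~$-1$ bilinear forms of the shape $L_0(\D_x^3\psi,\logd\D_x^{-1}\tv^s) - L_0(\logd\D_x^3\psi,\D_x^{-1}\tv^s) + L_1(\D_x^3\psi,\D_x^{-1}\tv^s)$ in the paper's notation, the last of which encodes the double commutator $[|D_x|^s,[T_\psi,\logd]]$. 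These are neither balanced cubic nor of the structural form $\D_x Q_{hl} + \D_x Q_{hh}$, so your proposed correction $\D_x T_{T_J w^{(0)}}\psi + \D_x\Pi(T_J w^{(0)},\psi)$---which is designed to kill high--low and high--high interactions---has nothing to cancel and leaves the order~$-1$ low--high residuals untouched.

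The correct second stage is a \emph{low--high} normal form, namely $\tw^s = \tfrac12 T_J L_0(\D_x^2\psi,\D_x^{-1}\tv^s)$, tailored to the specific subprincipal forms $L_0$, $L_1$ that survive the conjugation. Verifying that this correction cancels all three source terms requires distributing the linear flow across the two inputs of $L_0$ and tracking several algebraic identities (including a match between the $\logd$ commutator and the $L_1$ term via $J(1-\D_x\psi)=1$); this is the content of the paper's step~(b) and is not reducible to rerunning Proposition~\ref{Linearized normalized variable}.
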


\begin{proof}
Let $v$ satisfy \eqref{paradiff-eqn-inhomog}, where without loss of generality, $f = 0$. The natural approach is to reduce the equation for $v^s := |D_x|^s v$ to \eqref{paradiff-eqn-inhomog} with a perturbative inhomogeneity. However, the commutators arising from such a reduction are quadratic, and cannot satisfy balanced cubic estimates. In particular, they cannot be seen as directly perturbative. We will address these errors via a conjugation combined with a normal form correction.

In preparation, we use Lemmas~\ref{l:toy-red} and \ref{l:q-principal} to rewrite $Q_{lh}$ in \eqref{paradiff-eqn-inhomog} and obtain
\begin{equation}
\begin{aligned}
\D_t v - 2 \D_x (T_{1 - \D_x \psi}\logd + &T_{\logd \D_x \psi + R} + \D_x [T_\psi, \logd]) v = 0.
\end{aligned}
\end{equation}
Then $v^s := |D_x|^s v$ satisfies
\begin{equation}\label{vs-eqn}
\begin{aligned}
\D_t v^s &- 2 \D_x (T_{1 - \D_x \psi}\logd + T_{\logd \D_x \psi + R} + \D_x [T_\psi, \logd]) v^s \\
&= 2 \D_x ([|D_x|^s, T_{- \D_x \psi}]\logd + [|D_x|^s, T_{\logd \D_x \psi + R}] + \D_x[|D_x|^s, [T_\psi, \logd]]) v \\
&=: L(\D_x^2 \psi, \logd v^s) - L(\logd \D_x^2 \psi, v^s) + 2\D_x^2 [|D_x|^s, [T_\psi, \logd]] v + \mathcal R
\end{aligned}
\end{equation}
where we have absorbed $\D_x R$ into $\mathcal R$ and $L$ denotes an order zero paradifferential bilinear form, 
\begin{equation}
\begin{aligned}
L(\D_x f, u) &= -2 \D_x [|D_x|^s, T_f] |D_x|^{-s} u.
\end{aligned}
\end{equation}
In particular, observe that the principal term of $L$ is given by
\[
L(g, u) \approx -2sT_{g} u.
\]

To address the two $L$ contributions, which are quadratic and not directly perturbative, we apply two steps:
\begin{enumerate}
\item[a)] We first apply a conjugation to $v^s$ which improves the leading order of the contributions of the $L$ terms from, up to a logarithm, 0 to $-1$.
\item[b)] We then apply a normal form transformation yielding cubic, balanced source terms.
\end{enumerate}

\

a) We begin by computing the equation for the conjugated variable
\[
\tv^s := T_{J^{-s}} v^s.
\]
To do so, it suffices to apply $T_{J^{-s}}$ to \eqref{vs-eqn} and consider the commutators. These will include a $\D_t$ commutator, a $\D_x$ commutator, and a $\logd$ commutator.

\

i) First, we use \eqref{psi-eqn} to expand the $\D_t$ commutator,
\begin{equation}
\begin{aligned}
\, [T_{J^{-s}}, \D_t] v^s &= - sT_{J^{1-s} \D_x \D_t \psi} v^s = - sT_{J^{1-s}\D_x (2T_{1 - \D_x \psi} \logd \D_x \psi + 2T_{(\logd - 1) \D_x \psi + R} \D_x \psi + f)} v^s.
\end{aligned}
\end{equation}
Here the contribution from $f$ may be estimated using \eqref{psi-eqn-est} and discarded. Further, due to a favorable balance of derivatives when $\D_x$ falls on the lowest frequency variables, we can reduce to 
\[
- 2sT_{J^{-s} (T_J T_{1 - \D_x \psi} \logd \D_x^2 \psi + T_J T_{(\logd - 1) \D_x \psi + R} \D_x^2 \psi)} v^s.
\]
Lastly, we apply Lemma~\ref{l:para-prod} to merge and split paraproducts, reducing to 
\begin{equation}\label{dt-comm}
- 2sT_{\logd \D_x^2 \psi + T_{J(\logd - 1) \D_x \psi + R} \D_x^2 \psi} \tv^s.
\end{equation}
Observe that the first part of \eqref{dt-comm} cancels with the principal term of the second $L$ on the right hand side of \eqref{vs-eqn}. The second part of \eqref{dt-comm} will cancel with a contribution from ii) below.

\

ii) Next, we consider the commutator of $T_{J^{-s}}$ with the outer $\D_x$. We obtain 
\[
2sT_{J^{1-s} \D_x^2 \psi} (T_{1 - \D_x \psi}\logd + T_{\logd \D_x \psi + R} + \D_x [T_\psi, \logd]) v^s.
\]
Here it is convenient to apply \eqref{qlh-expression-full} of Lemma~\ref{l:q-principal} to write this as
\[
2sT_{J^{1-s} \D_x^2 \psi} ((T_{1 - \D_x \psi} \logd + T_{(\logd - 1) \D_x \psi + R}) v^s + \Gamma(\D_x^2 \psi, \D_x^{-1} v^s)).
\]
Applying Lemmas \ref{l:para-prod} and \ref{l:para-com} to split, compose, and commute paraproducts, this may be reduced modulo perturbative terms to 
\[
2sT_{\D_x^2 \psi} \logd \tv^s + 2s T_{ J(\logd - 1) \D_x \psi \cdot \D_x^2 \psi + \D_x^2 \psi \cdot R} \tv^s.
\]
The first term above cancels with the principal term of the first $L$. The second term cancels with the remaining part of the $\D_t$ commutator above in \eqref{dt-comm}. To see this cancellation, we have freely exchanged multiplication by $J(\logd - 1) \D_x \psi$ with a paraproduct, as the difference has a favorable balance of derivatives and is thus perturbative.

\

iii) Returning to the commutator of $T_{J^{-s}}$ with the dispersive term, it remains to consider the commutator with the inner $\log |D_x|$, where we have used Lemma~\ref{l:para-com} to discard any paraproduct commutators. We have
\[
-2\D_x T_{1 - \D_x \psi} [T_{J^{-s}}, \log |D_x|] v^s
\]
whose principal term $2s T_{\D_x^2 \psi} \tv^s$ cancels with the principal term of the double commutator on the right hand side of \eqref{vs-eqn}.

\

To conclude, we have
\begin{equation}\label{tvs-eqn}
\begin{aligned}
\D_t \tv^s &- 2 \D_x (T_{1 - \D_x \psi}\logd + T_{\logd \D_x \psi + R} + \D_x[T_\psi, \logd]) \tv^s \\
&= (L(\D_x^2 \psi, \logd \tv^s) + 2sT_{\D_x^2 \psi} \logd \tv^s) \\
&\quad - (L(\logd \D_x^2 \psi, \tv^s) + 2sT_{\logd \D_x^2 \psi} \tv^s) \\
&\quad + 2T_{J^{-s}}\D_x^2 [|D_x|^s, [T_\psi, \logd]] v - 2\D_x T_{1 - \D_x \psi} [T_{J^{-s}}, \log |D_x|] v^s + f \\
&=: L_0(\D_x^3 \psi, \logd \D_x^{-1}\tv^s) - L_0(\logd \D_x^3 \psi, \D_x^{-1}\tv^s) + L_1(\D_x^3 \psi, \D_x^{-1} \tv^s) + f
\end{aligned}
\end{equation}
where $f$ satisfies \eqref{bal-est-eng}. Here $L_0$ and $L_1$ denote order zero paradifferential bilinear forms, respectively
\begin{equation*}
\begin{aligned}
L_0(\D_x^2 f, \D_x^{-1} u) &= L(\D_x f, u) + 2sT_{\D_x f} u, \\
L_1(\D_x^2 f, \D_x^{-1} T_{J^{-s}} u) &= ( 2T_{J^{-s}}\D_x^2 [|D_x|^s, [T_{\D_x^{-1} f}, \logd]] |D_x|^{-s} u - 2\D_x T_{1 - \D_x \psi} [T_{J^{-s}}, \log |D_x|] u).
\end{aligned}
\end{equation*}
Observe that since $L_i$ are all order 0 paradifferential bilinear forms, we have reduced the terms of the inhomogeneity to order $-1$.

\

b) We next choose a normal form transformation to reduce the quadratic components of the inhomogeneity to balanced cubic terms. Let
\[
\tw^s = \half T_{J} L_0(\D_x^2\psi, \D_x^{-1} \tv^s).
\]
Then we claim that $\tilde u^s := \tv^s - \tilde w^s$ is the desired normal form transform. To see this, it remains to compute 
\begin{equation}\label{compute-nf}
\begin{aligned}
(\D_t - 2 \D_x (T_{1 - \D_x \psi}\logd + T_{\logd \D_x \psi + R} + \D_x[T_\psi, \logd])) \tw^s
\end{aligned}
\end{equation}
and observe cancellation with the three $L_i$ bilinear forms on the right hand side of \eqref{tvs-eqn}. To see this, we partition the computation into the following subgroups:

\

i) When the full equation of \eqref{compute-nf} falls on the high frequency $\tv^s$ input of $L_0$, we may use \eqref{tvs-eqn} to see that the contribution has a favorable balance of derivatives and may be absorbed into $f$. 

\

ii) We may commute the equation freely with the low frequency $J$ due to a favorable balance of derivatives, absorbing the contribution again into $f$. 

\

It remains to consider commutators of the terms of the equation \eqref{compute-nf} across the low frequency $\D_x^2 \psi$ input of $\tilde w^s$. 

\

iii) We first consider the commutators involving the operators
\[
2\D_x(T_{\logd \D_x \psi + R} + \D_x[T_\psi, \logd]).
\] 
We may freely commute the $\D_x$ forward, and also use \eqref{qlh-expression-full}  of Lemma~\ref{l:q-principal} to rewrite, reducing to the operators
\[
2 T_{(\logd - 1) \D_x \psi + R} \D_x + \Gamma(\D_x^2 \psi, \D_x^{-1} (\cdot)) \circ \D_x.
\] 
The contribution from the $\Gamma$ term may be absorbed into $f$ due to a favorable balance. The remaining contribution
\[
T_{J} L_0(T_{(\logd - 1) \D_x \psi + R}\D_x^3\psi, \D_x^{-1} \tv^s)
\]
will cancel with a contribution of step iv) below.

\

iv) For the case when $\D_t$ falls on the low frequency input of $L_0$, we apply equation \eqref{psi-eqn}. Precisely, the two non-perturbative contributions on the left hand side of \eqref{psi-eqn} cancel with the second $L_0$ source term in \eqref{tvs-eqn}, and the remaining contribution of step iii) above, respectively.

\

v) From the dispersive term $2T_{J^{-1}}\D_x \logd$, the case when $\D_x$ falls on the low frequency input of $L_0$ while $\logd$ has commuted to the high frequency input cancels with the first $L_0$ source term in \eqref{tvs-eqn}.

\

vi) From the dispersive term $2T_{J^{-1}}\D_x \logd$, it remains to consider the commutators with $\logd$, where the $\D_x$ remains in front. Using Lemma~\ref{l:para-prod} with $J(1 - \D_x \psi) = 1$, and opening the definition of $L_0$, we have
\[
2\D_x^2 [\logd, [|D_x|^s, T_\psi]] |D_x|^{-s} \tv^s - 2s\D_x [\logd, T_{\D_x \psi}] \tv^s.
\]
We claim that these two terms cancel with the two terms of $L_1$ respectively. Indeed, for the first term, we commute using Lemma~\ref{l:para-com} to reduce to
\[
2T_{J^{-s}}\D_x^2 [\logd, [|D_x|^s, T_\psi]] v
\]
which cancels with the double commutator term of $L_1$. For the second term, we also commute using Lemma~\ref{l:para-com} to reduce to
\[
2s T_{J^{-s}} \D_x [T_{\D_x \psi}, \logd] v^s =: - 2s T_{J^{-s}} T_{\D_x^2 \psi} v^s + T_{J^{-s}} L_2(\D_x^3 \psi, \D_x^{-1} v).
\]
On the other hand, the second term of $L_1$ may be expressed as
\[
- 2\D_x T_{1 - \D_x \psi} [T_{J^{-s}}, \log |D_x|] v^s = 2s T_{1 - \D_x \psi}T_{J^{1-s} \D_x^2 \psi} v^s + T_{1 - \D_x \psi} L_2(J^{1-s} \D_x^3 \psi, \D_x^{-1} v).
\]
These cancel, up to applying Lemma~\ref{l:para-prod} with $J(1 - \D_x \psi) = 1$ and perturbative errors.
\end{proof}

\

We thus obtain the following energy estimate: 

\begin{proposition}\label{Higher energy estimates}
Assume that $A\ll 1$ and $B\in L_t^2$. For every $s\geq 0$, there exist energy functionals $E^{(s)}(v)$ such that we have the following:
\begin{enumerate}
    \item[a)]Norm equivalence:
    \begin{align*}
        E^{(s)}(v)\approx_A\|v\|^2_{\dot{H}_x^s}
    \end{align*}
    \item[b)]Energy estimates:
    \begin{align*}
        \frac{d}{dt}E^{(s)}(v)\lesssim _{A}B^2\|v\|^2_{\dot{H}_x^s}
    \end{align*}
\end{enumerate}
\end{proposition}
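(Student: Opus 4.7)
The plan is to combine the $L^2$ energy estimate of Proposition~\ref{Paradiferential flow linearized energy estimates} with the higher order normal form reduction of Proposition~\ref{High energy normalized variable}, in direct analogy with how Proposition~\ref{Linearized equation energy estimates} was obtained at the $L^2$ level. Concretely, given $v$ solving the paradifferential flow \eqref{paradiff-eqn-inhomog}, we let $v^s$ be the normalized variable constructed in Proposition~\ref{High energy normalized variable} and set
\[
E^{(s)}(v) := E(v^s), \qquad E(w) = \int w \cdot T_{1-\D_x\psi} w \, dx.
\]
By construction, $v^s$ itself satisfies a paradifferential equation of the form \eqref{paradiff-eqn}, namely
\[
\D_t v^s - 2\logd\, \D_x v^s - \D_x Q_{lh}(\varphi, v^s) = \mathcal{R}(\varphi, v),
\]
with inhomogeneity $\mathcal{R}$ satisfying the balanced cubic bound \eqref{bal-est-eng}.

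For part~(a), the norm equivalence splits into two steps. First, using that $A\ll 1$ ensures $1 - \D_x\psi$ is close to $1$ (since $\D_x\psi = F(\varphi_x)$ vanishes quadratically at $\varphi_x=0$), the paraproduct $T_{1-\D_x\psi}$ is a small perturbation of the identity on $L^2$, giving $E(v^s) \approx \|v^s\|_{L^2}^2$ with implicit constants depending only on $A$. Second, the approximation estimate in Proposition~\ref{High energy normalized variable} states $\|v^s - |D_x|^s v\|_{L^2} \lesssim_A A\|v\|_{\dot H^s}$, so for $A$ sufficiently small we can absorb the error and conclude $\|v^s\|_{L^2} \approx \|v\|_{\dot H^s}$. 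Chaining these gives $E^{(s)}(v) \approx_A \|v\|_{\dot H^s}^2$.

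For part~(b), we apply Proposition~\ref{Paradiferential flow linearized energy estimates} directly to $v^s$ with source term $\mathcal{R}(\varphi,v)$, obtaining
\[
\frac{d}{dt} E(v^s) \lesssim_A B^2 \|v^s\|_{L^2}^2 + \|\mathcal{R}(\varphi,v)\|_{L^2}\|v^s\|_{L^2}.
\]
Using the norm equivalence from part~(a) on the first term and the balanced cubic bound \eqref{bal-est-eng} on $\mathcal{R}$ (interpreted at the $\dot H^s$ level, consistent with the reduction in Proposition~\ref{High energy normalized variable}) on the second, both terms are controlled by $B^2\|v\|_{\dot H^s}^2$, yielding the desired estimate. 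The only real technical point to verify is that the composite perturbation—the $T_{J^{-s}}$ conjugation together with the normal form correction $\tilde w^s$ in the construction of $v^s$—is uniformly bounded and invertible in $L^2$ when $A$ is small, so that the equivalence $\|v^s\|_{L^2} \approx \|v\|_{\dot H^s}$ actually goes through without losing the small constant; this is the only place the smallness of $A$ is genuinely needed, and it follows from the same paraproduct mapping bounds used throughout Section~\ref{s:reduction}.
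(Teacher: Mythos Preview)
Your proof is correct and follows essentially the same approach as the paper: define $E^{(s)}(v) = E(v^s)$ with $v^s$ the normalized variable from Proposition~\ref{High energy normalized variable}, then invoke Proposition~\ref{Paradiferential flow linearized energy estimates}. The paper's own proof is terse (``Part a) is immediate, whereas part b) follows from Proposition~\ref{Paradiferential flow linearized energy estimates}''), and you have simply supplied the details it omits, including the two-step norm equivalence and the explicit handling of the source $\mathcal{R}$.
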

\begin{proof}
Let $E^{(s)}(v)=E(v^s)$, where $E(\cdot)$ is defined in Proposition \ref{Paradiferential flow linearized energy estimates}, and $v^s$ is defined in Proposition \ref{High energy normalized variable}.

Part a) is immediate, whereas part b) follows from Proposition \ref{Paradiferential flow linearized energy estimates}.
\end{proof}

\section{Local well-posedness}\label{s:lwp}

To establish the local well-posedness result at low regularity, we follow the approach outlined in \cite{ITprimer}. We consider $\varphi_0 \in \dot{H}_x^{s_1}\cap\dot{H}_x^{s_2}$, with $s_1<\frac{3}{2}$, $s_2>2$. Let $\varphi_0^h=(\varphi_0)_{\leq h}$, where $h\in\mathbb{N}$. Since $\varphi_0^h \rightarrow \varphi_0$ in $\dot{H}_x^{s_1}\cap\dot{H}_x^{s_2}$, we may assume that $ \|\varphi_0^h\|_{\dot{H}_x^{s_1}\cap\dot{H}_x^{s_2}}<R$ for all $h$.

We construct a uniform $\dot{H}_x^{s_1}\cap\dot{H}_x^{s_2} $ frequency envelope $\{c_k\}_{k\in\mathbb{Z}}$ for $\varphi_0$ having the following properties:

\begin{enumerate}
     \item[a)]Uniform bounds:     
     \[ \|P_k(\varphi_0^h)\|_{\dot {H}_x^{s_1}\cap\dot {H}_x^{s_2}}\lesssim c_k,\]
     
     \item[b)]High frequency bounds:     
     \[\|\varphi_0^h\|_{\dot{H}_x^{s_1}\cap\dot{H}_x^N}\lesssim 2^{h(N-s_2)}c_h, \qquad N>s_2, \]
     
     \item[c)]Difference bounds:     
     \[\|\varphi_0^{h+1}-\varphi_0^h\|_{L_x^2}\lesssim 2^{-s_2h}c_h,\]
     
     \item[d)]Limit as $h\rightarrow\infty$:     
     \[ \varphi_0^h\rightarrow \varphi_0 \in \dot{H}_x^{s_1}\cap\dot{H}_x^{s_2}.\]
     
 \end{enumerate}

Let $\varphi^h$ be the solutions with initial data $\varphi_0^h$, whose existence is guaranteed instance by \cite{SQGzero}. Using the energy estimate for the solution $\varphi$ of \eqref{SQG} from Proposition \ref{Higher energy estimates} and Proposition \ref{Normalized variable}, we deduce that there exists $T = T(\|\varphi_0\|_{H_x^s}) > 0$ on which all of these solutions are defined, with high frequency bounds
    \[ 
    \|\varphi^h\|_{C_t^0(\dot{H}_x^{s_1}\cap\dot{H}_x^{N})}\lesssim \|\varphi_0^h\|_{\dot{H}_x^{s_1}\cap\dot{H}_x^{N}} \lesssim 2^{h(N-s_2)}c_h.
     \]
Further, by using the energy estimates for the solution of the linearized equation from Proposition \ref{Linearized equation energy estimates}, we have
\[
\|\varphi^{h+1}-\varphi^h\|_{C_t^0L_x^2}\lesssim 2^{-s_2h}c_h.
\]
By interpolation, we infer that
\[
\|\varphi^{h+1}-\varphi^h\|_{C_t^0(\dot{H}_x^{s_1}\cap\dot{H}_x^{s_2})}\lesssim c_h.
\]

As in \cite{ITprimer}, we get
\[
\|P_k \varphi^h \|_{C_t^0(\dot{H}_x^{s_1}\cap\dot{H}_x^{s_2}) } \lesssim c_k
\]
and that
\[
\|\varphi^{h+k}-\varphi^h\|_{C_t^0(\dot{H}_x^{s_1}\cap\dot{H}_x^{s_2}) }\lesssim c_{h\leq\cdot<h+k}=\left(\sum_{\substack{n=h}}^{h+k-1}c_n^2\right)^{\frac{1}{2}}
\]
for every $k\geq 1$. Thus, $\varphi^h$ converges to an element $\varphi$ belonging to $C_t^0(\dot{H}_x^{s_1}\cap\dot{H}_x^{s_2})([0,T]\times\mathbb{R})$.  Moreover, we also obtain
\begin{equation}\label{convergence estimate}
\begin{aligned}
\|\varphi^h - \varphi\|_{C_t^0(\dot{H}_x^{s_1}\cap\dot{H}_x^{s_2})} &\lesssim c_{\geq h}=\left(\sum_{\substack{n=h}}^{\infty} c_n^2\right)^{\frac{1}{2}}.
\end{aligned}
\end{equation}

We now prove continuity with respect to the initial data. We consider a sequence
 \[
 \varphi_{0j}\rightarrow \varphi_0 \in (\dot{H}_x^{s_1}\cap\dot{H}_x^{s_2})
 \]
 and an associated sequence of $ H_x^s$-frequency envelopes $\{c^j_k\}_{k \in \Z}$, each satisfying the analogous properties enumerated above for $c_k$, and further such that $c^j_k \rightarrow c_k$ in $l^2(\mathbb{Z})$. In particular,
\begin{equation}\label{convergence estimate j}
\|\varphi_j^h - \varphi_j\|_{C_t^0(\dot{H}_x^{s_1}\cap\dot{H}_x^{s_2})} \lesssim c^j_{\geq h}=\left(\sum_{\substack{n=h}}^{\infty} (c^j_n)^2\right)^{\frac{1}{2}}.
\end{equation}
 
Using the triangle inequality with \eqref{convergence estimate} and \eqref{convergence estimate j}, we write
\begin{align*}
\|\varphi_j - \varphi\|_{C_t^0(\dot{H}_x^{s_1}\cap\dot{H}_x^{s_2}) } &\lesssim \|\varphi^h - \varphi\|_{C_t^0(\dot{H}_x^{s_1}\cap\dot{H}_x^{s_2})}+\|\varphi_j^h - \varphi_j\|_{C_t^0(\dot{H}_x^{s_1}\cap\dot{H}_x^{s_2})}+\|\varphi_j^h - \varphi^h\|_{C_t^0(\dot{H}_x^{s_1}\cap\dot{H}_x^{s_2})}\\
&\lesssim c_{\geq h}+c^j_{\geq h}+\|\varphi_j^h - \varphi^h\|_{C_t^0(\dot{H}_x^{s_1}\cap\dot{H}_x^{s_2})}.
\end{align*}
To address the third term, we observe that for every fixed $h$, $\varphi_j^h \rightarrow \varphi^h$ in $(\dot{H}_x^{s_1}\cap\dot{H}_x^{s_2})$. We conclude that $\varphi_j \rightarrow \varphi$ in $C_t^0(\dot{H}_x^{s_1}\cap\dot{H}_x^{s_2}) ([0,T]\times\mathbb{R})$.

\section{Global well-posedness}\label{s:gwp}

In this section we prove global well-posedness for the SQG equation \eqref{SQG} with small and localized initial data. We use the wave packet method of Ifrim-Tataru, which is systematically described in \cite{ITpax}. This section is companion to Section 6 in \cite{SQGzero}.

\subsection{Notation}

Consider the linear flow
\[
i\D_t \varphi - A(D)\varphi = 0
\]
and the linear operator
\[
L = x - tA'(D).
\]
In our setting, we have the symbol
\[
a(\xi) = -2\xi \log |\xi|
\]
and thus
\[
A(D)=-2D\log|D|, \qquad  L = x+2t+2t\log|D|.
\]
We define the weighted energy space ($s_0<1$ and $s>3$)
\[
\|\varphi\|_X = \|\varphi\|_{\dot{H}^{s_0}\cap\dot{H}^s} + \|L\partial_x \varphi\|_{L^2},
\]

We also define the pointwise control norm
\[
\|\varphi\|_Y = \||D_x|^{1-\delta}\varphi\|_{L_x^\infty} + \||D_x|^{\frac12+\delta}\varphi_x\|_{L_x^\infty}.
\]

We partition the frequency space into dyadic intervals $I_\lambda$ localized at dyadic frequencies $\lambda \in 2^\Z$, and consider the associated partition of velocities
\[
J_\lambda = a'(I_\lambda)
\]
which form a covering of the real line, and have equal lengths. To these intervals $J_\lambda$ we select reference points $v_\lambda \in J_\lambda$, and consider an associated spatial partition of unity
\[
1 = \sum_\lambda \chi_\lambda(x), \qquad \text{supp } \chi_\lambda \subseteq  \overline{J_\lambda},\qquad \chi_\lambda=1\text{ on }J_\lambda,
\]
where $\overline{J_\lambda}$ is a slight enlargement of $J_\lambda$, of comparable length, uniformly in $\lambda$.

Lastly, we consider the related spatial intervals, $tJ_\lambda$, with reference points $x_\lambda  = tv_\lambda \in tJ_\lambda$. 
\

\subsection{Overview of the proof}

We provide a brief overview of the proof.

\

1. We make the bootstrap assumption for the pointwise bound
\begin{equation}\label{bootstrap}
    \|\varphi(t)\|_Y \lesssim C \eps \langle t\rangle^{-\frac{1}{2}}
\end{equation}
where $C$ is a large constant, in a time interval $t \in [0, T]$ where $T>1$. 

\

2. The energy estimates for \eqref{SQG} and the linearized equation will imply
\begin{equation}\label{Regular Energy Estimate}
    \|\varphi(t)\|_{X} \lesssim \langle t\rangle^{C^2\eps^2} \|\varphi(0)\|_X.
    \end{equation}
    
\

3. We aim to improve the bootstrap estimate \eqref{bootstrap} to 
\begin{equation}\label{pointwise}
    \|\varphi(t)\|_Y \lesssim \eps \langle t\rangle^{-\frac12}.
\end{equation}
We use vector field inequalities to derive bounds of the form
\begin{equation}\label{pt-estimate}
    \|\varphi(t)\|_Y\lesssim \eps \langle t\rangle^{-\frac{1}{2} + C\eps^2},
\end{equation}
which is the desired bound but with an extra $t^{C\eps^2}$ loss.

\

4. In order to rectify the extra loss, we use the wave packet testing method. Namely, we define a suitable asymptotic profile $\gamma$, which is then shown to be an approximate solution for an ordinary differential equation. This enables us to obtain suitable bounds for the asymptotic profile without the aforementioned loss, which can then be transferred back to the solution $\varphi$.

\subsection{Energy estimates} 

From Proposition \ref{Higher energy estimates} and Gr\"onwall's lemma, together with the fact that $\eps\ll 1$, we get that
\begin{align*}
 \|\varphi(t,x)\|_{H_x^s}\lesssim e^{C\int_0^tC(A(\tau))B(\tau)^2\,d\tau}\|\varphi_0\|_{H_x^s}. 
\end{align*}

Let $u=L\partial_x\varphi+t\int F(\dq^y\varphi)\sdq^y\varphi_x\,dy$, which satisfies the linearized equation with error $\int F'(\dq^y\varphi)\dq^y\varphi\sdq^y\varphi_x\,dy$, which is clearly balanced. From Proposition \ref{Linearized equation energy estimates}, along with Gr\"onwall's lemma and the fact that $\eps\ll 1$, we have
\begin{align*}
 \|u(t,x)\|_{L_x^2}\lesssim e^{C\int_0^tC(A(\tau))B(\tau)^2\,d\tau}\|u_0\|_{L_x^2}. 
\end{align*}
Along with the bootstrap assumptions, these readily imply that
\begin{equation}\label{Vector Field Energy Estimate}
    \|\varphi\|_{X}\lesssim \|\varphi(t)\|_{H_x^s} + \|u(t)\|_{L_x^2}\lesssim \eps e^{C^2\eps^2\int_0^t\langle s\rangle^{-1}\,ds}\lesssim \eps \langle t\rangle^{C^2\eps^2}.
\end{equation}

\

\subsection{Vector field bounds} 
Proposition 2.1 from \cite{ITpax} implies that
\begin{align*}
    \|\varphi_\lambda\|^2_{L_x^{\infty}}&\lesssim \frac{1}{t}(\|\varphi_\lambda\|_{L_x^2}\|L\partial_x\varphi_\lambda\|_{L_x^2}+\|\varphi_\lambda\|^2_{L_x^2}).
\end{align*}
When $\lambda\leq 1$,
\begin{align*}
  \|\varphi_\lambda\|_{L_x^{\infty}}
  &\lesssim \frac{1}{\sqrt{t}}\lambda^{-(1-\delta-\delta_1)}(\|\lambda^{2-2\delta-2\delta_1}\varphi_\lambda\|^{1/2}_{L_x^2}\|L\partial_x\varphi_\lambda\|^{1/2}_{L_x^2}+\|\lambda^{1-\delta-\delta_1}\varphi_\lambda\|_{L_x^2}) \lesssim \frac{1}{\sqrt{t}}\lambda^{-(1-\delta-\delta_1)}\|\varphi\|_X
\end{align*}
and when $\lambda>1$,
\begin{align*}
  \|\varphi_\lambda\|_{L_x^{\infty}}
  &\lesssim \frac{1}{\sqrt{t}}\lambda^{-\frac32 - 2 \delta}(\|\lambda^{3+4\delta}\varphi_\lambda\|^{1/2}_{L_x^2}\|L\partial_x\varphi_\lambda\|^{1/2}_{L_x^2}+\|\lambda^{\frac32+2\delta}\varphi_\lambda\|_{L_x^2}) \lesssim \frac{1}{\sqrt{t}}\lambda^{-\left(\frac32+2\delta\right)}\|\varphi\|_X.
\end{align*}
By dyadic summation and Bernstein's inequality, we deduce the bound
\begin{equation}\label{Pointwise Vector Field Bound 2}
    \|\varphi\|_Y=\|\langle D_x\rangle^{\frac12+2\delta}|D_x|^{1-\delta}\varphi\|_{L_x^{\infty}}\lesssim\frac{\|\varphi\|_X}{\sqrt{t}}.
\end{equation}

By the localized dispersive estimate \cite[Proposition 5.1]{ITpax}, 
\begin{align*}
    |\varphi_\lambda(x)|^2\lesssim\frac{1}{|x-x_{\lambda}|t\frac{1}{\lambda}}(\|L\varphi_\lambda\|_{L_x^2}+\lambda^{-1}\|\varphi_\lambda\|_{L_x^2})^2,
\end{align*}
 which implies that
 \begin{equation}\label{Pointwise Elliptic Estimate}
    \|(1-\chi_{\lambda})\varphi_\lambda\|_{L_x^{\infty}}\lesssim \frac{\lambda^{-1/2}}{t}(\|L\partial_x\varphi_\lambda\|_{L_x^2}+\|\varphi_\lambda\|_{L_x^2})\lesssim\frac{\lambda^{-1/2}}{t}(\|\varphi\|_X+\|\varphi_\lambda\|_{L_x^2})
\end{equation}
 
\

To end this section we record the following elliptic bounds:

\begin{lemma}\label{Elliptic bounds for the derivative}
We have
\begin{equation}
\label{Pointwise Elliptic Estimate 3}
    \||D_x|^{\frac12+\delta}\partial_x((1-\chi_{\lambda})\varphi_\lambda)\|_{L_x^{\infty}}\lesssim \frac{\lambda^{1+\delta}}{t}(\|\varphi\|_X+\|\varphi_\lambda\|_{L_x^2})
    \end{equation}
    \begin{equation}
\label{Pointwise Elliptic Estimate 2}
   \||D_x|^{1-\delta}((1-\chi_{\lambda})\varphi_\lambda)\|_{L_x^{\infty}}\lesssim \frac{\lambda^{\frac12-\delta}}{t}(\|\varphi\|_X+\|\varphi_\lambda\|_{L_x^2}),
\end{equation}
and
\begin{equation}\label{L2 Elliptic Estimate}
\|(1-\chi_{\lambda})\varphi_\lambda\|_{L_x^{2}}\lesssim \frac{\lambda^{-1}}{t}(\|\varphi\|_X+\|\varphi_\lambda\|_{L_x^2}),
\end{equation}
Moreover, the difference quotient satisfies the bounds
  \begin{align*}
\|(1-\chi_{\lambda})\dq^y\varphi_\lambda\|_{L_x^{\infty}}&\lesssim  \frac{\lambda^{1/2}}{t}(\|\varphi\|_X+\|\varphi_\lambda\|_{L_x^2}),
  \end{align*}
  and
  \begin{align*}
\|(1-\chi_{\lambda})\dq^y\varphi_\lambda\|_{L_x^{2}}&\lesssim  \frac{(\|\varphi\|_X+\|\varphi_\lambda\|_{L_x^2})}{t}.
  \end{align*}
\end{lemma}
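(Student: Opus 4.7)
The plan is to derive all five bounds from the pointwise dispersive estimate \eqref{Pointwise Elliptic Estimate} by exploiting the frequency localization of $\varphi_\lambda$ at scale $\lambda$. A preliminary observation is that, since $a'(\xi) = -2\log|\xi| - 2$, the velocity window $J_\lambda = a'(I_\lambda)$ has length $O(1)$ uniformly in $\lambda$, so the cutoff $1 - \chi_\lambda$ is supported in the region $|x - x_\lambda| \gtrsim t$, with $x_\lambda = t v_\lambda$.

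For the $L^\infty$ estimates \eqref{Pointwise Elliptic Estimate 3} and \eqref{Pointwise Elliptic Estimate 2}, the strategy is to apply \eqref{Pointwise Elliptic Estimate} to the frequency-localized expressions $|D_x|^{1-\delta}\varphi_\lambda$ and $|D_x|^{\frac12+\delta}\partial_x \varphi_\lambda$, which remain supported at frequency $\lambda$. Commuting $P_\lambda$ past $L\partial_x$ yields $\|L\partial_x(|D_x|^s \varphi_\lambda)\|_{L^2} \lesssim \lambda^s (\|\varphi\|_X + \|\varphi_\lambda\|_{L^2})$ and $\||D_x|^s \varphi_\lambda\|_{L^2} \lesssim \lambda^s \|\varphi_\lambda\|_{L^2}$ for the relevant $s \in \{1-\delta, \tfrac{3}{2}+\delta\}$. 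Invoking \eqref{Pointwise Elliptic Estimate} on these frequency-localized quantities then produces a $\lambda^{-1/2}/t$ factor, which combines with $\lambda^s$ to yield the claimed powers $\lambda^{1/2-\delta}/t$ and $\lambda^{1+\delta}/t$. Commutators of the form $[|D_x|^s, \chi_\lambda]\varphi_\lambda$, needed when moving the fractional derivative past the spatial cutoff, are lower order because $\chi_\lambda$ varies on the slow spatial scale $t \gg 1$, and are absorbable into the main estimate after a slight enlargement of the cutoff.

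For the $L^2$ estimate \eqref{L2 Elliptic Estimate}, direct $L^2$ integration of \eqref{Pointwise Elliptic Estimate} produces only a logarithmic divergence, so a different mechanism is required. The key algebraic identity is
\[
x - x_\lambda = L - 2t\log(|D_x|/\lambda),
\]
which follows from $L = x + 2t + 2t\log|D_x|$ together with $v_\lambda = -2\log\lambda - 2$ (up to the $O(1)$ uncertainty in the choice of reference point). Since $\log(|D_x|/\lambda)$ has bounded symbol on the Fourier support of $\varphi_\lambda$, this gives a first inequality of the form $\|(x-x_\lambda)\varphi_\lambda\|_{L^2} \lesssim \|L\varphi_\lambda\|_{L^2} + t\|\varphi_\lambda\|_{L^2}$. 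A preliminary bound $\|L\varphi_\lambda\|_{L^2} \lesssim \lambda^{-1}(\|\varphi\|_X + \|\varphi_\lambda\|_{L^2})$ is obtained by writing $L = L\partial_x \circ \partial_x^{-1}$, using that $\partial_x^{-1}$ acts as multiplication by $O(\lambda^{-1})$ on frequency-localized functions, and commuting $P_\lambda$ past $L\partial_x$. Combining these via multiplication by $(1-\chi_\lambda)/(x-x_\lambda)$ on the support of $1-\chi_\lambda$ (where $|x-x_\lambda| \gtrsim t$, so this factor is bounded by $1/t$), together with an iteration applied to the residual $\log(|D_x|/\lambda)\varphi_\lambda$ (which is again frequency localized at $\lambda$ and satisfies the same structural bounds), yields the desired $\lambda^{-1}/t$ gain. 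Finally, the difference quotient estimates follow from the pointwise and $L^2$ bounds on $\partial_x \varphi_\lambda$ (after a slight enlargement of the cutoff): for $|y| \leq 1$ one uses $\dq^y f(x) = \int_0^1 f'(x+sy)\,ds$ and absorbs the small shift, while for $|y| > 1$ one estimates $\dq^y \varphi_\lambda$ directly by the pointwise bound divided by $|y|$.

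The main obstacle will be the $L^2$ estimate \eqref{L2 Elliptic Estimate}. Because the naive pointwise-to-$L^2$ integration loses a logarithm, the argument must genuinely exploit the operator $L$, and in particular the cancellation in $x - x_\lambda = L - 2t\log(|D_x|/\lambda)$ that isolates the centered spatial variable. Handling the residual $\log(|D_x|/\lambda) \varphi_\lambda$ in a way that preserves the full $\lambda^{-1}/t$ decay (rather than introducing an unacceptable $\|\varphi_\lambda\|_{L^2}$ remainder) is the technically delicate step.
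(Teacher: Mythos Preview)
Your treatment of the $L^\infty$ bounds \eqref{Pointwise Elliptic Estimate 3} and \eqref{Pointwise Elliptic Estimate 2} is correct and matches the paper's approach: both apply the pointwise elliptic estimate \eqref{Pointwise Elliptic Estimate} to $|D_x|^s\varphi_\lambda$ (equivalently to $\partial_x\varphi_\lambda$) and absorb the $[\,|D_x|^s,\chi_\lambda]$ commutator using that $\chi_\lambda(\cdot/t)$ varies on scale $t$.

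For the difference quotient bounds your route differs from the paper's. The paper applies \eqref{Pointwise Elliptic Estimate} directly to $\dq^y\varphi_\lambda$ and uses the commutator identity $L\,\dq^y\varphi_\lambda = \dq^y(L\varphi_\lambda) - \varphi_\lambda(\cdot+y)$ together with $\|\dq^y g\|_{L^2}\le\|\partial_x g\|_{L^2}$; this gives the result uniformly in $y$ in one stroke. Your mean--value / large--$|y|$ split is workable but needs repair: the split should occur at $|y|\sim\lambda^{-1}$ rather than $|y|=1$ (otherwise for $\lambda<1$ the ``large $y$'' piece only yields $\lambda^{-1/2}/t$, not $\lambda^{1/2}/t$), and the term $(1-\chi_\lambda(x/t))\varphi_\lambda(x+y)$ in the large--$|y|$ region cannot be controlled by the elliptic estimate since the shifted argument may lie in the hyperbolic region. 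The paper's commutator approach avoids both issues.

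The genuine gap is in \eqref{L2 Elliptic Estimate}. Writing $(x-x_\lambda)\varphi_\lambda = L\varphi_\lambda - 2t\log(|D_x|/\lambda)\varphi_\lambda$ and multiplying by $m(x)=(1-\chi_\lambda)/(x-x_\lambda)$ gives a good term $mL\varphi_\lambda$ of size $t^{-1}\|L\varphi_\lambda\|_{L^2}\lesssim\lambda^{-1}t^{-1}(\|\varphi\|_X+\|\varphi_\lambda\|_{L^2})$, but the residual $2tm\log(|D_x|/\lambda)\varphi_\lambda$ is only $O(\|\varphi_\lambda\|_{L^2})$ since $|2tm|\lesssim 1$ and the logarithmic multiplier is bounded on $I_\lambda$. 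Your proposed iteration does not close: the residual is $2tm\,\psi_1$ with $\psi_1=\log(|D_x|/\lambda)\varphi_\lambda$, which is \emph{not} of the form $(1-\chi_\lambda)\psi_1$, so reapplying the identity to $\psi_1$ does not reproduce the same structure with a gain. Each iterate carries the same $O(1)$ prefactor, so no smallness accumulates. (One can salvage this with a nested family of cutoffs or by arranging $\mathrm{supp}(1-\chi_\lambda)$ so that $|x-x_\lambda|/t$ strictly dominates $\sup_{I_\lambda}|A'(\xi)-v_\lambda|$, turning the scheme into a contraction; but this is not what you wrote.) The paper sidesteps the issue entirely by invoking \cite[Proposition~5.1]{ITpax} for the $L^2$ elliptic bound rather than deriving it from \eqref{Pointwise Elliptic Estimate}.
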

\begin{proof}
We use the bounds
\begin{align*}
    |\partial_x(\chi_{\lambda}(x/t)|&\lesssim t^{-1}.
\end{align*}
From \ref{Pointwise Elliptic Estimate} applied for $\partial_x\varphi$,
\begin{align*}
    \|\partial_x((1-\chi_{\lambda})\varphi_\lambda)\|_{L_x^{\infty}}&\lesssim \frac{1}{t}\|\chi'_{\lambda}\varphi_\lambda\|_{L_x^{\infty}}+\|(1-\chi_{\lambda})\partial_x\varphi_\lambda\|_{L_x^{\infty}} \lesssim \frac{\lambda^{1/2}}{t}(\|\varphi\|_X+\|\varphi_\lambda\|_{L_x^2}).
\end{align*}
The first two bounds immediately follow from \ref{Pointwise Elliptic Estimate}, and the $L^2$ elliptic estimate similarly follows from \cite[Proposition 5.1]{ITpax}.

For the bounds involving the difference quotient, from \ref{Pointwise Elliptic Estimate} applied for $\dq^y\varphi$, we have
\begin{align*}
\|(1-\chi_{\lambda})\dq^y\varphi_\lambda\|_{L_x^\infty}&\lesssim \frac{\lambda^{1/2}}{t}(\|L\dq^y\varphi_\lambda\|_{L_x^2}+\lambda^{-1}\|\dq^y\varphi_\lambda\|_{L_x^2})\\
&\lesssim \frac{\lambda^{1/2}}{t}(\|\dq^y(L\varphi_\lambda)\|_{L_x^2}+\|\varphi_\lambda(x+y)\|_{L_x^2}+\|\varphi_\lambda\|_{L_x^2})\\
&\lesssim \frac{\lambda^{1/2}}{t}(\|L\partial_x\varphi_\lambda\|_{L_x^2}+\|\varphi_\lambda\|_{L_x^2})\\
&\lesssim \frac{\lambda^{1/2}}{t}(\|\varphi\|_X+\|\varphi_\lambda\|_{L_x^2})
\end{align*}
The other bound is proved similarly.
\end{proof}

\subsection{Wave packets}

We construct wave packets as follows. Given the dispersion relation $a(\xi)$, the group velocity $v$ satisfies
\[
v = a'(\xi) = -2-2\log|\xi|,
\]
so we denote
\[
\xi_v = -e^{-1-\frac{v}{2}}.
\]
Then we define the linear wave packet $\pax^v$ associated with velocity $v$ by
\[
\pax^v = a''(\xi_v)^{-\half} \chi(y) e^{it\phi(x/t)}, \qquad y = \frac{x - vt}{t^\half a''(\xi_v)^\half},
\]
where the phase $\phi$ is given by
\[
\phi(v) = v\xi_v - a(\xi_v),
\]
and $\chi$ is a unit bump function, such that $\int\chi(y)\,dy=1$.

We remark that we will typically apply frequency localizations of the form $\pax^v_\lambda = P_\lambda\mathbf{u}^v$ with $v \in J_\lambda$.

\

We observe that since
\[
\D_v (|\xi_v|^{\half} ) = -\frac14 |\xi_v|^{\half}, \qquad \D_v (a''(\xi_v)^{-\half} ) = -\frac14 a''(\xi_v)^{-\half},
\]
we may write
\begin{equation}\label{dpax}
\D_v\pax^v = - \tilde L \pax^v + \pax^{v,II} = t^{\half} a''(\xi_v)^{-\half} \pax^v + \pax^{v,II} 
\end{equation}
where
 \[ \tilde{L}=t(\partial_x-i\phi'(x/t))\]
and $\pax^{v,II}$ has a similar wave packet form. We also recall from \cite[Lemmas 4.4, 5.10]{ITpax} the sense in which $\pax^v$ is a good approximate solution:

\begin{lemma}\label{l:paxsoln}
The wave packet $\pax^v$ solves an equation of the form
\[
(i\D_t - A(D)) \pax^v = t^{-\frac32}(L \pax^{v, I} + \rpax^v)
\]
where $\pax^{v, I}, \rpax^v$ have wave packet form,
\[
\pax^{v, I} \approx a''(\xi_v)^{-\half} \pax^v, \qquad \rpax^v \approx \xi_v^{-1}a''(\xi_v)^{-\half} \pax^v.
\]
\end{lemma}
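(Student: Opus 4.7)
\emph{Strategy.} The plan is to evaluate $(i\D_t - A(D))\pax^v$ by direct calculation, exploiting the Taylor expansion of $A(\xi)$ around $\xi_v$ together with the Legendre transform structure $\phi(v) = v\xi_v - a(\xi_v)$ encoded in the phase. The argument follows the blueprint of Lemmas 4.4 and 5.10 of \cite{ITpax}, specialized to the dispersion relation $a(\xi) = -2\xi\log|\xi|$.

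First I would compute $i\D_t \pax^v$ using the chain rule. Since $\D_t y = -v/(t^{1/2}a''(\xi_v)^{1/2}) - y/(2t)$ and the Legendre identity gives $\D_t[t\phi(x/t)] = \phi(x/t) - (x/t)\phi'(x/t) = -a(\xi_{x/t})$, one has
\[
i\D_t \pax^v = a(\xi_{x/t})\pax^v + i a''(\xi_v)^{-\half}\chi'(y)(\D_t y)\, e^{it\phi(x/t)}.
\]
Next I would expand the spatial operator as $A(D)\pax^v = \sum_{k \geq 0}\frac{A^{(k)}(\xi_v)}{k!}(D - \xi_v)^k \pax^v$, valid because $\pax^v$ is frequency-localized at scale $t^{-1/2}$ around $\xi_v$. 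Each factor of $(D - \xi_v)$ applied to $\pax^v$ produces either a phase contribution $\phi'(x/t) - \xi_v = \xi_{x/t} - \xi_v$ of size $t^{-1/2}$ on the support of $\chi(y)$, or an envelope contribution $-i a''(\xi_v)^{-1} t^{-1/2}\chi'(y) e^{it\phi(x/t)}$. Matching the two expansions and using $A(\xi_v) = a(\xi_v)$, $A'(\xi_v) = v = a'(\xi_v)$, and $A''(\xi_v) = a''(\xi_v)$, the $k = 0, 1, 2$ leading contributions cancel in both the phase and envelope parts, by virtue of the Legendre structure.

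To organize the residual into the claimed form, I would rewrite all terms carrying a factor $y = (x - vt)/(t^{1/2}a''(\xi_v)^{1/2})$ using the operator identity $x - tv = L + t(A'(D) - v)$. Since on a wave packet frequency-localized at $\xi_v$ the symbol $A'(D) - v$ vanishes to first order at $\xi_v$, the commutator piece $t(A'(D) - v)$ applied to a wave packet yields another wave packet at scale $t^{1/2}$, which in turn absorbs the $y$-free $O(t^{-1})$ phase-commutator residue $-i\phi''(x/t)/t\, \pax^v$ via the identity $\phi''(v) = 1/a''(\xi_v)$. The outcome is a single term $t^{-3/2}L\pax^{v, I}$ with $\pax^{v, I} \approx a''(\xi_v)^{-\half} \pax^v$. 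The remaining $O(t^{-3/2})$ contributions from the cubic and higher Taylor remainders then assemble into $\rpax^v$, with amplitude $\xi_v^{-1} a''(\xi_v)^{-\half}$ relative to $\pax^v$, reflecting $a''(\xi) = -2/\xi$.

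\emph{Main obstacle.} The main difficulty lies in the last assembly step. The Legendre identity produces cancellations through orders $t^{-1/2}$ straightforwardly, but three separate $t^{-1}$ contributions -- the $y/(2t)$ piece in $\D_t y$, the envelope-envelope piece in $(D - \xi_v)^2 \pax^v$, and the phase commutator $-i\phi''/t$ -- must combine cleanly into $t^{-3/2} L\pax^{v, I}$ rather than leaving an unaccounted $O(t^{-1})$ leftover. This is ensured by the matching relation $\phi''(v) = 1/a''(\xi_v)$ inherited from the Legendre duality between $a$ and $\phi$, but verifying the combinatorial coefficients requires careful computation. Uniform control in the dyadic parameter $\lambda$ with $|\xi_v| \sim \lambda$ is routine once the cancellations are established, since the derivatives of $a$ scale cleanly in $\lambda$ away from $\xi = 0$.
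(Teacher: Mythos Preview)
The paper does not supply a proof of this lemma; it is simply recalled from \cite[Lemmas 4.4, 5.10]{ITpax}, as indicated in the sentence immediately preceding the statement. Your sketch follows exactly the argument of those cited lemmas---direct computation of $(i\D_t - A(D))\pax^v$, Taylor expansion of $A$ about $\xi_v$, and use of the Legendre relations $\phi'(v)=\xi_v$, $\phi''(v)=1/a''(\xi_v)$ to produce the cancellations and to convert the factor $x-vt$ into $L + t(A'(D)-v)$---so there is nothing to compare: your approach \emph{is} the approach of the referenced source, and the computations you outline are correct.
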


\

The asymptotic profile at frequency $\lambda$ is meaningful when the associated spatial region $tJ_\lambda$ dominates the wave packet scale at frequency $\lambda$:
\[
\delta x \approx t^\half a''(\lambda)^\half \lesssim |tJ_\lambda| \approx t \lambda a''(\lambda).
\]
This corresponds to 
\[
t \gtrsim \lambda^{-2} a''(\lambda)^{-1} \approx \lambda^{-1}.
\]
Accordingly we define
\[
\mathcal D = \{(t, v) \in \R^+ \times \R : v \in J_\lambda, \ t \gtrsim \lambda^{-1} \}.
\]

\subsection{Wave packet testing}

In this section we establish estimates on the asymptotic profile function
\[
\gamma^\lambda(t, v) := \langle \varphi, \mathbf{u}^v_\lambda \rangle_{L^2_x}=\langle \varphi_\lambda, \mathbf{u}^v \rangle_{L^2_x}.
\]
We will see that $\gamma^\lambda$ essentially has support $v\in J_\lambda$.

\

We will also use the following crude bounds involving the higher regularity of $\gamma^\lambda$:
\begin{lemma}[Lemma 6.3, \cite{SQGzero}]\label{Gamma bounds}
We have
    \begin{align*}
    \|\chi_\lambda \D_v^n \gamma^\lambda\|_{L^\infty}&\lesssim  t^{\half}(1+ t^\half\lambda^\half)^n\|\varphi_{\lambda}\|_{L_x^{\infty}}, \\
      \|\chi_\lambda \D_v^n\gamma^\lambda\|_{L^2}&\lesssim  (t\lambda)^{\frac14}(1+t^\half\lambda^\half)^n\|\varphi_{\lambda}\|_{L_x^{2}},
    \end{align*}
    and 
    \begin{align*}
\|\chi_\lambda\partial_v \gamma^\lambda\|_{L^\infty}&\lesssim t^{\frac14}\lambda^{-\frac34}\|\varphi\|_X + t^\half \|\varphi_\lambda\|_{L_x^{\infty}}.
    \end{align*}
    
\end{lemma}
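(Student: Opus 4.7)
The plan is to compute derivatives of $\gamma^\lambda$ via \eqref{dpax}, which gives $\D_v\pax^v = -\tilde L\pax^v + \pax^{v,II}$ with $\tilde L = t(\D_x - i\phi'(x/t))$. Using the reformulation $\tilde L = t\,e^{it\phi(x/t)}\D_x e^{-it\phi(x/t)}$, a direct calculation yields
\[
\tilde L\pax^v = \frac{t^{1/2}}{a''(\xi_v)}\chi'(y)\,e^{it\phi(x/t)},
\]
another wave packet with the same spatial scale $\sigma := t^{1/2}a''(\xi_v)^{1/2} \approx t^{1/2}\lambda^{-1/2}$ as $\pax^v$, but with amplitude enlarged by a factor $\approx t^{1/2}\lambda^{1/2}$ for $v \in J_\lambda$. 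Iterating \eqref{dpax}, $\D_v^n \pax^v$ becomes a linear combination of wave-packet-type terms with amplitude $\lesssim \lambda^{1/2}(1+t^{1/2}\lambda^{1/2})^n$ and support in an interval of length $\sigma$ around $x = vt$.

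For the first bound, I apply H\"older: $|\D_v^n \gamma^\lambda(v)| \leq \|\varphi_\lambda\|_{L^\infty}\|\D_v^n\pax^v\|_{L^1}$, with each wave packet constituent having $L^1$ norm $\approx \sigma \cdot \lambda^{1/2}(1+t^{1/2}\lambda^{1/2})^n = t^{1/2}(1+t^{1/2}\lambda^{1/2})^n$. For the second bound, I recognize $\gamma^\lambda$ as a Gabor-type convolution: setting $u = vt$ and $\tilde\varphi(x) = \varphi_\lambda(x)e^{-it\phi(x/t)}$, one has $\gamma^\lambda(u/t) \approx \lambda^{1/2}(\tilde\varphi \ast \chi^{\sharp})(u)$, where $\chi^{\sharp}$ is the unit bump $\chi$ rescaled to width $\sigma$. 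Young's inequality, together with the analogous estimates for the derivatives produced by iterating \eqref{dpax}, yields $\|\chi_\lambda \D_v^n \gamma^\lambda\|_{L^2_v} \lesssim (1+t^{1/2}\lambda^{1/2})^n\|\varphi_\lambda\|_{L^2}$. This already implies the stated bound, since $(t\lambda)^{1/4} \gtrsim 1$ on $\mathcal D$.

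The third bound is the most subtle. Using \eqref{dpax} and the skew-adjointness $\tilde L^* = -\tilde L$, I write
\[
\D_v \gamma^\lambda(v) = \int (\tilde L \varphi_\lambda)\,\overline{\pax^v}\, dx + \int \varphi_\lambda\,\overline{\pax^{v,II}}\, dx,
\]
with the second integral bounded directly by $t^{1/2}\|\varphi_\lambda\|_{L^\infty}$. For the first, I exploit the microlocal null identity $\tilde L\pax^v \approx \tfrac{i\xi_v}{2}\, L\pax^v$, which follows from the first-order expansion $\xi_{x/t} \approx \xi_v - \tfrac{1}{2}\xi_v(x - vt)/t$ together with the wave-packet approximation $L\pax^v \approx (x - vt)\pax^v$ at the center $(vt, \xi_v)$. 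Self-adjointness of $L$ converts the pairing into one against $L\varphi_\lambda$. The commutator relation $L\D_x = \D_x L - 1$, combined with $\|L\D_x \varphi\|_{L^2} \lesssim \|\varphi\|_X$ and frequency localization at scale $\lambda$, produces $\|L\varphi_\lambda\|_{L^2} \lesssim \lambda^{-1}\|\varphi\|_X$. Combined with $\|\pax^v\|_{L^2} \approx (t\lambda)^{1/4}$ and a careful accounting of the wave-packet principal symbol, this yields the claimed estimate.

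The main obstacle is obtaining the sharp $\lambda^{-3/4}$ exponent in the third bound: a direct Cauchy-Schwarz pairing with $\tilde L\pax^v$ falls short by a factor of $\lambda$. The required gain must be extracted from the null structure, exploiting that both $\tilde L$ and $L$ vanish microlocally at the wave-packet phase-space center, so that pairing $\varphi_\lambda$ against the lower-order object $L\pax^v$ (rather than directly against $\tilde L\pax^v$) captures the additional $\lambda^{-1}$ decay.
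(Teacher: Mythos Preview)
The paper does not prove this lemma; it only cites \cite{SQGzero}, Lemma 9.3, and offers the brief remark that the first two bounds ``transfer'' pointwise and energy control from $\varphi_\lambda$ to $\gamma^\lambda$, while the third exploits the wave-packet structure together with the localized energy norm. So there is no detailed argument in the present paper to compare against, and your task is essentially to supply what the paper omits.

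Your treatment of the first two bounds is correct and follows the expected route: differentiate the wave packet via \eqref{dpax}, observe that each $v$-derivative costs a factor $t^{1/2}a''(\xi_v)^{-1/2}\approx (t\lambda)^{1/2}$, and pair using H\"older (for $L^\infty$) or the convolution structure plus Young (for $L^2$). Your observation that the $L^2_v$ bound you obtain is actually slightly stronger than stated (by a factor $(t\lambda)^{1/4}$) is fine, since the lemma is only applied in the regime $(t,v)\in\mathcal D$.

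For the third bound there is a genuine gap. Your ingredients are: the heuristic identity $\tilde L\pax^v\approx \tfrac{i\xi_v}{2}L\pax^v$, self-adjointness of $L$, the estimate $\|L\varphi_\lambda\|_{L^2}\lesssim\lambda^{-1}\|\varphi\|_X$, and $\|\pax^v\|_{L^2}\approx (t\lambda)^{1/4}$. Tracing these through Cauchy--Schwarz gives
\[
|\langle\varphi_\lambda,\tilde L\pax^v\rangle|
\;\lesssim\; |\xi_v|\cdot\|L\varphi_\lambda\|_{L^2}\cdot\|\pax^v\|_{L^2}
\;\approx\; \lambda\cdot\lambda^{-1}\|\varphi\|_X\cdot t^{1/4}\lambda^{1/4}
\;=\; t^{1/4}\lambda^{1/4}\|\varphi\|_X,
\]
which misses the claimed $t^{1/4}\lambda^{-3/4}\|\varphi\|_X$ by a full factor of $\lambda$. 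The phrase ``careful accounting of the wave-packet principal symbol'' does not bridge this; there is no further $\lambda^{-1}$ available from the pieces you have listed.

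Moreover, the identity $\tilde L\pax^v\approx \tfrac{i\xi_v}{2}L\pax^v$ is only a scaling heuristic, not an approximate pointwise or $L^2$ equality. A direct computation gives $\tilde L\pax^v = -\tfrac{\xi_v}{2}t^{1/2}\chi'(y)e^{it\phi}$, whereas at leading order $L\pax^v \approx t^{1/2}y\chi(y)e^{it\phi}$; these have the same amplitude scale but different profiles, so one is not a scalar multiple of the other and the error in your ``$\approx$'' is of the same size as the main terms. The correct proof (as in \cite{SQGzero}) uses the operator-level relation between $\tilde L$ and $L$ that holds on the phase-conjugated side, together with the \emph{exact} frequency localization of $\varphi_\lambda$, to convert the pairing $\langle\tilde L\varphi_\lambda,\pax^v\rangle$ into one controlled directly by $\|L\D_x\varphi_\lambda\|_{L^2}$ without the spurious factor of $|\xi_v|$. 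Your argument would need to be reworked at this step.
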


The first two bounds are reflective of the fact that the pointwise and energy bounds can be transferred from $\varphi_\lambda$ to the approximate profile corresponding to the dyadic frequency $\lambda$. The last bound is more special, as it makes use of the fact that $\gamma^\lambda$ is defined in terms of the wave packet $\mathbf{u}^v$, in order to provide a pointwise estimate that uses the localized energy norm.

\subsubsection{Approximate profile}

We recall from \cite{ITpax} that $\gamma^\lambda$ provides a good approximation for the profile of $\varphi$. In our setting, we will also need to compare the profile with the differentiated flow $\D_x \varphi$. Define
\[
r^\lambda(t, x) = \chi_\lambda(x/t)\varphi_\lambda(t, x) - t^{-\half}\chi_\lambda(x/t)\gamma^\lambda(t, x/t)e^{-it\phi(x/t)}.
\]

\begin{lemma}[Lemma 6.4, \cite{SQGzero}]\label{Error bounds 1}
Let $t \geq 1$. Then we have
\begin{equation*}
\begin{aligned}
\|\chi_\lambda(x/t) r^\lambda\|_{L^\infty_x} &\lesssim t^{-\frac34}\lambda^{-\frac14} \| \tilde L \varphi_\lambda\|_{L^2_x}, \\
\|\chi_\lambda(x/t) \D_v r^\lambda\|_{L^\infty_x} &\lesssim t^{\frac14}\lambda^{-\frac14} \| \tilde L \D_x\varphi_\lambda\|_{L^2_x} +  (1 + t^\half \lambda^\half)\|\varphi_\lambda \|_{L^\infty}.
\end{aligned}
\end{equation*}
\end{lemma}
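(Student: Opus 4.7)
The proof uses the wave packet testing formalism of Ifrim-Tataru, comparing $\varphi_\lambda$ directly with its wave packet approximation through the explicit form of $\mathbf{u}^v$. For the first bound, I would substitute $\mathbf{u}^v = a''(\xi_v)^{-\half}\chi(y) e^{it\phi(x/t)}$ into $\gamma^\lambda(t, v) = \langle \varphi_\lambda, \mathbf{u}^v\rangle$ and change variables $y \mapsto z = (y - vt)/\delta_v$ to reduce to
\[
t^{-\half}\gamma^\lambda(t, v) e^{-it\phi(v)} = \int \chi(z)\,\varphi_\lambda(t, vt + \delta_v z)\, e^{it[\phi(v+\delta_v z/t) - \phi(v)]}\,dz,
\]
where $\delta_v := t^{\half} a''(\xi_v)^{\half} \approx t^{\half}\lambda^{-\half}$ on the support of $\chi_\lambda$. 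Since $\int \chi = 1$, the difference $\varphi_\lambda(t, vt) - t^{-\half}\gamma^\lambda(t, v) e^{-it\phi(v)}$ becomes an integral of $\chi(z)$ against the increment from $0$ to $\delta_v z$ of the quantity $\varphi_\lambda \cdot (\text{phase})$. Differentiating in the inner variable $s$, the resulting integrand involves precisely $(\delta_v/t)(\D_x \pm i\phi'(x/t))\varphi_\lambda$, whose modulus matches $(\delta_v/t)|\tilde L\varphi_\lambda|$ for real $\varphi_\lambda$ — this is the mechanism by which $\tilde L$ enters the bound. Cauchy-Schwarz in the double integral produces a factor $\delta_v^{\half}/t$, yielding the first estimate.

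For the second bound, I would differentiate $r^\lambda$ in $v$ (viewing $r^\lambda$ as a function of $(t,v)$ through $x = vt$). The derivative naturally splits into four groups: (i) the spatial part $t\chi_\lambda(v)\D_x\varphi_\lambda(t, vt)$; (ii) the phase-derivative part $it\phi'(v)\cdot t^{-\half}\chi_\lambda(v)\gamma^\lambda(t, v) e^{-it\phi(v)}$; (iii) the $\D_v \gamma^\lambda$ contribution, which via \eqref{dpax} and the skew-adjointness of $\tilde L$ equals $\langle \tilde L\varphi_\lambda, \mathbf{u}^v\rangle + \langle \varphi_\lambda, \mathbf{u}^{v, II}\rangle$; and (iv) derivatives falling on $\chi_\lambda(v)$. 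Groups (i) and (ii) are individually of order $t\lambda\|\varphi_\lambda\|_{L^\infty}$, far larger than the claimed bound, so the main work is to combine them: using $\gamma^\lambda_{\D_x} := \langle \D_x\varphi_\lambda, \mathbf{u}^v\rangle = -\langle \varphi_\lambda, \D_x \mathbf{u}^v\rangle$ and expanding $\D_x\mathbf{u}^v = i\phi'(x/t)\mathbf{u}^v + (\text{wave-packet remainder})$, one recovers the profile identity $\gamma^\lambda_{\D_x} \approx -i\phi'(v)\gamma^\lambda$ modulo a controlled error, so that (i)$+$(ii) combine into $t \cdot r^\lambda_{\D_x}$, the profile error for $\D_x\varphi_\lambda$. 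Applying the first bound to $\D_x\varphi_\lambda$ in place of $\varphi_\lambda$ then gives the $t^{\frac14}\lambda^{-\frac14}\|\tilde L\D_x\varphi_\lambda\|_{L^2}$ term.

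The residual terms — the cancellation remainder from (i)$+$(ii), the auxiliary wave packet contribution $\langle \varphi_\lambda, \mathbf{u}^{v, II}\rangle$ from (iii), and the piece from (iv) — are bounded using the pointwise inequality $t^{-\half}|\gamma^\lambda(t, v)| \lesssim \|\varphi_\lambda\|_{L^\infty}$ from Lemma~\ref{Gamma bounds}. The unit contribution to $1 + t^{\half}\lambda^{\half}$ comes from differentiating $\chi_\lambda$ on its uniform-length support, while the $t^{\half}\lambda^{\half}$ factor is generated by the phase-residual across the wave packet, namely the product of the wave-packet scale $\delta_v \approx t^{\half}\lambda^{-\half}$ with the variation $t\phi''(v) \lambda \approx \lambda$ produced when $\gamma^\lambda_{\D_x}$ fails to be exactly $-i\phi'(v)\gamma^\lambda$. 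The chief technical obstacle is tracking the (i)$+$(ii) cancellation with enough precision to extract both the leading $t\cdot r^\lambda_{\D_x}$ term and a subleading error of the correct size; this must be done while respecting the wave-packet form remainder in $\D_x\mathbf{u}^v$, which naturally feeds into the $\mathbf{u}^{v, II}$-type contribution in (iii) and ultimately also produces an $\|\varphi_\lambda\|_{L^\infty}$ bound after testing against $\varphi_\lambda$.
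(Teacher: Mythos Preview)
Your proposal is correct and follows essentially the same approach as the paper, which (citing the result from \cite{SQGzero}) indicates that the proof proceeds via the wave packet definition of $\gamma^\lambda$ together with the representation \eqref{dpax} of $\D_v\mathbf{u}^v$. The mechanism you describe---writing the first error as an integral of $\tilde L\varphi_\lambda$ over the wave packet scale and then applying Cauchy--Schwarz, and for the second bound regrouping the $v$-derivative into $t\cdot r^\lambda_{\D_x\varphi}$ plus wave-packet-form remainders controlled by $\|\varphi_\lambda\|_{L^\infty}$---is exactly the intended argument; just take care with the sign conventions in the complex pairing and with whether the $\D_v\gamma^\lambda$ contribution (your group (iii)) must also participate in the main cancellation alongside (i) and (ii).
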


We also observe that on the wave packet scale, we may replace $\gamma(t, v)$ with $\gamma(t, x/t)$ up to acceptable errors. Denote
\[
\beta^\lambda_v(t, x) = t^{-1/2}\chi_\lambda(x/t)(\gamma(t, v) - \gamma(t, x/t))e^{it\phi(x/t)},
\]

\begin{lemma}[Lemma 6.5, \cite{SQGzero}]\label{Beta bounds} 
Let $v\in J_\lambda$, and $(t,v)\in\mathcal{D}$. Then, for every $y\neq 0$ and $x$ such that $\displaystyle|x-vt|\lesssim\delta x=t^{1/2}\lambda^{-1/2}$, we have the bound 
\begin{align*}
|\dq^y\beta_v|&\lesssim t^{-3/4}\lambda^{-1/4}\|\varphi\|_X
\end{align*}
\end{lemma}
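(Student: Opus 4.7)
The plan is to split into two regimes based on the size of $|y|$ relative to the wave packet scale $\delta x = t^{1/2}\lambda^{-1/2}$.

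In the regime $|y| \leq \delta x$, I would estimate $|\dq^y \beta_v| \leq \|\partial_x \beta_v\|_{L^\infty}$ on the relevant segment, and compute $\partial_x \beta_v$ directly. Differentiating
\[
\beta_v = t^{-1/2}\chi_\lambda(x/t)\bigl(\gamma(t,v)-\gamma(t,x/t)\bigr)e^{it\phi(x/t)}
\]
produces three contributions: one from $\chi_\lambda(x/t)'/t$, one from $-\gamma_v(t,x/t)/t$, and one from the phase derivative $it\phi'(x/t)/t = i\phi'(x/t)$. The first two carry an extra factor $t^{-1}$ from the chain rule, while the third is dominant because, by the Legendre structure $\phi(v)=v\xi_v - a(\xi_v)$, we have $\phi'(v)=\xi_v$, so $|\phi'(x/t)|\approx \lambda$ for $x/t$ in the support of $\chi_\lambda$.

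For this dominant phase term I would estimate
\[
t^{-1/2}|\phi'(x/t)|\,|\gamma(t,v)-\gamma(t,x/t)|\lesssim t^{-1/2}\lambda\cdot|x/t-v|\cdot\|\chi_\lambda\partial_v\gamma\|_{L^\infty}.
\]
The hypothesis $|x-vt|\lesssim \delta x$ gives $|x/t-v|\lesssim t^{-1/2}\lambda^{-1/2}$, and the third bound in Lemma \ref{Gamma bounds} supplies $\|\chi_\lambda\partial_v\gamma\|_{L^\infty}\lesssim t^{1/4}\lambda^{-3/4}\|\varphi\|_X+t^{1/2}\|\varphi_\lambda\|_{L^\infty}$. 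The $\|\varphi\|_X$ piece multiplies out to exactly $t^{-3/4}\lambda^{-1/4}\|\varphi\|_X$, saturating the target. The $\|\varphi_\lambda\|_{L^\infty}$ piece is $t^{-1}\lambda^{1/2}\|\varphi_\lambda\|_{L^\infty}$, which after invoking the pointwise bound \eqref{Pointwise Vector Field Bound 2} gives a small gain of the form $t^{-1}\lambda^{1/2-\alpha}\|\varphi\|_X$ (with $\alpha>0$ in either the $\lambda\leq 1$ or $\lambda>1$ regime); using $t\geq\lambda^{-1}$ from $(t,v)\in\mathcal D$, one checks this is dominated by the target. The two subleading contributions to $\partial_x\beta_v$ carry an extra factor $(t\lambda)^{-1}$ relative to the phase term, and are similarly absorbed.

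In the regime $|y|>\delta x$, I would bound
\[
|\dq^y\beta_v|\leq\frac{|\beta_v(t,x+y)|+|\beta_v(t,x)|}{|y|}.
\]
The cutoff $\chi_\lambda(\cdot/t)$ forces $|y|\lesssim t$ for $\beta_v(t,x+y)$ to be nonzero (beyond this range the term simply vanishes and the estimate is easier). Within this range, using $|(x+y)/t-v|\lesssim |y|/t+t^{-1/2}\lambda^{-1/2}$, the mean value theorem gives
\[
|\beta_v(t,x+y)|\lesssim t^{-1/2}\bigl(|y|/t+t^{-1/2}\lambda^{-1/2}\bigr)\|\chi_\lambda\partial_v\gamma\|_{L^\infty}.
\]
Dividing by $|y|\geq t^{1/2}\lambda^{-1/2}$ reduces this to $t^{-3/2}\|\chi_\lambda\partial_v\gamma\|_{L^\infty}$, which by Lemma \ref{Gamma bounds} is bounded by $t^{-5/4}\lambda^{-3/4}\|\varphi\|_X$ plus a negligible error; using $t\lambda\geq 1$ this is comfortably within the target.

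The main obstacle is the phase contribution in the small-$y$ regime, since it saturates the target: the arithmetic must line up exactly, requiring the identity $\phi'(v)=\xi_v\approx\lambda$, the sharp wave-packet localization $|x-vt|\lesssim t^{1/2}\lambda^{-1/2}$, and the refined pointwise bound on $\partial_v\gamma$ from Lemma \ref{Gamma bounds}, all three together. Everything else is a routine matter of checking that subleading terms are smaller by at least one power of $(t\lambda)^{-1/2}$, which is guaranteed on $\mathcal D$.
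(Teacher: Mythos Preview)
Your argument is correct and matches the approach the paper sketches (the paper does not give a detailed proof here, only a one-line pointer to the wave packet representation of $\partial_v\mathbf u^v$ and to \cite{SQGzero}; your route through the third bound of Lemma~\ref{Gamma bounds} is precisely how that structure gets used). Two small arithmetic slips to clean up: in the small-$|y|$ regime the $\|\varphi_\lambda\|_{L^\infty}$ contribution to $\partial_x\beta_v$ is $t^{-1/2}\lambda^{1/2}\|\varphi_\lambda\|_{L^\infty}$, not $t^{-1}\lambda^{1/2}\|\varphi_\lambda\|_{L^\infty}$ (your \emph{next} expression $t^{-1}\lambda^{1/2-\alpha}\|\varphi\|_X$, after invoking \eqref{Pointwise Vector Field Bound 2}, is correct, so the slip is harmless); and the second subleading term $t^{-3/2}\|\chi_\lambda\partial_v\gamma\|_{L^\infty}$ is only a factor $(t\lambda)^{-1/2}$ smaller than the phase contribution, not $(t\lambda)^{-1}$, which is still enough on $\mathcal D$.
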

Lemmas \ref{Error bounds 1} and \ref{Beta bounds} are proved by using the wave packet definition of the asymptotic profile $\gamma^\lambda$, as well as the wave packet representation of $\partial_v\mathbf{u}_v$ (\eqref{dpax}), in order to estimate the two errors in regular (Sobolev) or localized energy spaces.

\subsection{Bounds for $Q$}

Write, slightly abusing notation, 
\[
Q(\varphi) = Q(\varphi, \overline{\varphi}, \varphi) := \frac13 \int \sgn(y) \cdot |\dq^y\varphi|^2\dq^y\varphi\, dy.
\]

We recall from \cite{SQGzero} the following lemma:
\begin{lemma}[See Lemma 6.6,\cite{SQGzero}]\label{Cubic difference bound}
For $\displaystyle 0<\delta\ll 1$, we have the difference estimates 
\begin{equation*}\begin{aligned}
\|Q(\varphi_1) - Q(\varphi_2)\|_{L_x^\infty + L_x^{1/\delta}} &\lesssim (\|\D_x (\varphi_1, \varphi_2)\|_{L_x^\infty}^2 +\|\D_x(\varphi_1,\varphi_2)\|
_{L_x^{\frac{1}{2\delta}}}\|(\varphi_1,\varphi_2)\|_{L_x^\infty}) \|\partial_x(\varphi_1-\varphi_2)\|_{L_x^{\infty}},
\end{aligned}\end{equation*}
\begin{equation*}\begin{aligned}
\|Q(\varphi_1) - Q(\varphi_2)\|_{L_x^2}&\lesssim \|\D_x (\varphi_1,\varphi_2)\|_{L_x^2}\|\D_x(\varphi_1,\varphi_2)\|_{L_x^\infty}\|\partial_x(\varphi_1-\varphi_2)\|_{L_x^\infty}\\
&+\||D_x|^{1-\delta}(\varphi_1,\varphi_2)\|_{L_x^2}\|(\varphi_1,\varphi_2)\|_{L_x^\infty}\|\partial_x(\varphi_1-\varphi_2)\|_{L_x^\infty} ,
\end{aligned}\end{equation*}
\begin{equation*}\begin{aligned}
\|Q(\varphi_1) - Q(\varphi_2)\|_{L_x^\infty + L_x^{1/\delta}} &\lesssim (\|\langle D_x\rangle^{\delta} (\varphi_1, \varphi_2)\|_{L_x^\infty}\|\partial_x(\varphi_1, \varphi_2)\|_{L_x^\infty+L_x^{\frac{1}{2\delta}}}\|\partial_x(\varphi_1-\varphi_2)\|_{L_x^{\infty}},
\end{aligned}\end{equation*}
\begin{equation*}\begin{aligned}
\|Q(\varphi_1) - Q(\varphi_2)\|_{L_x^2} &\lesssim (\|\langle D_x\rangle^{\delta} (\varphi_1, \varphi_2)\|_{L_x^\infty}\|\partial_x(\varphi_1, \varphi_2)\|_{L_x^2}\|\partial_x(\varphi_1-\varphi_2)\|_{L_x^{\infty}}.
\end{aligned}\end{equation*}
\end{lemma}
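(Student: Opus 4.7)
The plan is to factor out the difference $\varphi_1-\varphi_2$ at the integrand level using the trilinearity of $Q$, and then reduce each of the four inequalities to an application of Lemma~\ref{Trilinear integral estimate} with appropriately chosen exponents. Using trilinearity and the fact that $\overline{\varphi_i}=\varphi_i$,
\begin{equation*}
Q(\varphi_1)-Q(\varphi_2)= Q(\varphi_1-\varphi_2,\overline{\varphi_1},\varphi_1)+Q(\varphi_2,\overline{\varphi_1-\varphi_2},\varphi_1)+Q(\varphi_2,\overline{\varphi_2},\varphi_1-\varphi_2),
\end{equation*}
so each summand takes the form $\int\sgn(y)\,\dq^y(\varphi_1-\varphi_2)\cdot\dq^yg_1\cdot\dq^yg_2\,dy$ with $g_i\in\{\varphi_1,\varphi_2\}$. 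In every application of the trilinear estimate we place the difference factor in the first slot with $\alpha_1=\beta_1=1$ and $p_1=\infty$, which absorbs the common $\|\partial_x(\varphi_1-\varphi_2)\|_{L^\infty}$ factor appearing on the right-hand side of each bound.

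The remaining two units of derivative and the remaining Lebesgue budget are then distributed between $\dq^yg_1$ and $\dq^yg_2$. For the $L^\infty$-part of each bound one uses the $\alpha$-half of Lemma~\ref{Trilinear integral estimate} with $\alpha_2=\alpha_3=1$ and $p_2,p_3\in\{\infty,2\}$ chosen so that $\sum 1/p_i=1/r$ matches the target norm ($r=\infty$ or $r=2$), saturating $\sum\alpha_i=3$. For the large-$|y|$ contributions landing in $L^{1/\delta}$ or $L^2$ one uses the $\beta$-half with one factor at exponent $\beta=1-\delta$ (in $L^{1/\delta}$ or $L^2$) and the other at $\beta=0$ in $L^\infty$, keeping $\sum\beta_i=2-\delta<2$. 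A Sobolev embedding $\||D_x|^{1-\delta}f\|_{L^{1/\delta}}\lesssim\|\partial_xf\|_{L^{1/(2\delta)}}$ then reconciles the target exponent $L^{1/\delta}$ with the $L^{1/(2\delta)}$ norm appearing on the right-hand side.

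The third and fourth bounds, phrased in terms of $\langle D_x\rangle^\delta$, arise from the symmetric reallocation in which the small derivative gap is moved onto the low-regularity $\varphi$ factor rather than onto a $\partial_x\varphi$ factor: take $\alpha_2=\delta$, $\alpha_3=1$, $p_2=p_3=\infty$ for the $L^\infty$ piece (so $\sum\alpha_i=2+\delta>2$), and the analogous distribution on the $\beta$ side for the $L^{1/(2\delta)}$ piece, again controlled by Sobolev embedding.

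The main obstacle is keeping track of the strict inequality $\sum\beta_i<2$ in Lemma~\ref{Trilinear integral estimate} for each parameter choice while exactly matching the Lebesgue exponents in the statement; the small parameter $\delta$ is built in precisely to create this room. Once the exponents are aligned, each inequality becomes a direct application of the trilinear estimate combined with a Sobolev embedding, and no further cancellation structure is needed.
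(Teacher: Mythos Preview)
Your approach is correct and is precisely the intended one: the paper does not prove this lemma but cites \cite{SQGzero} for the first two estimates and declares the last two ``similar,'' and the argument in \cite{SQGzero} is exactly the trilinear decomposition of $Q(\varphi_1)-Q(\varphi_2)$ followed by the $|y|\le 1$/$|y|>1$ splitting underlying Lemma~\ref{Trilinear integral estimate}, together with the Hardy--Littlewood--Sobolev step $\||D_x|^{1-\delta}f\|_{L^{1/\delta}}\lesssim\|\partial_x f\|_{L^{1/(2\delta)}}$.

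Two small points are worth tightening. First, when you invoke the ``$\alpha$-half'' and ``$\beta$-half'' of Lemma~\ref{Trilinear integral estimate} with \emph{different} Lebesgue exponents $p_i$ on the two halves (as you must for the $L^\infty+L^{1/\delta}$ targets), you are using the proof of the lemma rather than its statement, since in the stated form the $p_i$ are common to both terms; this is harmless but should be said explicitly. Second, for the third and fourth estimates your choice $\alpha_2=\delta$ produces $\||D_x|^{\delta}g\|_{L^\infty}$, not $\|\langle D_x\rangle^{\delta}g\|_{L^\infty}$, and these are not interchangeable at $p=\infty$. The clean fix is to split $g=P_{\le 1}g+P_{>1}g$: on the low-frequency piece use $\alpha_2=1$ together with Bernstein $\|\partial_x P_{\le 1}g\|_{L^\infty}\lesssim\|P_{\le 1}g\|_{L^\infty}\lesssim\|\langle D_x\rangle^{\delta}g\|_{L^\infty}$, and on the high-frequency piece use $\alpha_2=\delta$ with $\||D_x|^{\delta}P_{>1}g\|_{L^\infty}\lesssim\|\langle D_x\rangle^{\delta}g\|_{L^\infty}$. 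With these adjustments the exponent bookkeeping you outline goes through.
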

We note that only the first two estimates were proved in \cite{SQGzero}. However, we omit the proofs for the other two, as they are similar.

We will be considering separately the balanced and unbalanced components of $Q$. Precisely, we denote the diagonal set of frequencies by $\mathcal D$ and write
\begin{equation*}
\begin{aligned}
Q(\varphi, \varphi, \varphi) &= \sum_{(\lambda_1, \lambda_2, \lambda_3, \lambda) \in \mathcal D} Q(\varphi_{\lambda_1}, \varphi_{\lambda_2}, \varphi_{\lambda_3}) + \sum_{(\lambda_1, \lambda_2, \lambda_3, \lambda) \notin \mathcal D} Q(\varphi_{\lambda_1}, \varphi_{\lambda_2}, \varphi_{\lambda_3}) \\
&= Q^{bal}(\varphi, \varphi, \varphi) + Q^{unbal}(\varphi, \varphi, \varphi) = Q^{bal}(\varphi) + Q^{unbal}(\varphi).
\end{aligned}
\end{equation*}

\

The unbalanced portion of $Q$ satisfies the better bound as follows: 

\begin{lemma}\label{Unbalanced cubic estimate}
$Q^{unbal}$ satisfies the bounds
\[
\|\chi_\lambda^1\partial_xP_\lambda Q^{unbal}(\varphi)\|_{L_x^\infty} \lesssim \lambda^{-\delta}\frac{\|\varphi\|^3_X}{t^2}
\]
and
\[
\|\chi_\lambda^1\partial_xP_\lambda Q^{unbal}(\varphi)\|_{L_x^2} \lesssim \lambda^{-\delta}\frac{\|\varphi\|^3_X}{t^{3/2}},
\]
where $\chi^1_{\lambda}$ is a cut-off widening $\chi_{\lambda}$.
\end{lemma}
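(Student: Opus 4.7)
\emph{Approach.} The central observation is that unbalanced configurations are effectively elliptic on the support of $\chi_\lambda^1$. Since $\chi_\lambda^1$ restricts to the wave packet region $tJ_\lambda$ of frequency $\lambda$, and since at least one input factor $\varphi_{\lambda_i}$ has frequency $\lambda_i\not\sim\lambda$, that factor is forced to live outside its own wave packet region $tJ_{\lambda_i}$, where the elliptic bounds of Lemma \ref{Elliptic bounds for the derivative} apply. Each such elliptic bound upgrades $t^{-1/2}$ to $t^{-1}$, which converts the balanced pointwise rate $t^{-3/2}$ into $t^{-2}$ in $L^\infty$ and $t^{-1}$ into $t^{-3/2}$ in $L^2$, matching the claim.

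\emph{Main steps.} First, I would expand the unbalanced trilinear as
\[
Q^{unbal}(\varphi) = \sum_{(\lambda_1,\lambda_2,\lambda_3)\notin\mathcal D}Q(\varphi_{\lambda_1},\varphi_{\lambda_2},\varphi_{\lambda_3})
\]
using the symmetric trilinear extension of $Q$. Ordering $\lambda_1\geq\lambda_2\geq\lambda_3$, the requirement of output frequency $\sim\lambda$ combined with an unbalanced input forces either a high-high cascade $\lambda_1\sim\lambda_2\gg\lambda\gtrsim\lambda_3$ or a cascade with some $\lambda_i\ll\lambda$; in every case at least one $\lambda_i\not\sim\lambda$. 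Next, for each factor decompose $\varphi_{\lambda_i}=\chi_{\lambda_i}\varphi_{\lambda_i}+(1-\chi_{\lambda_i})\varphi_{\lambda_i}$; in $\supp(\chi_\lambda^1)$, and modulo tail errors from the kernel of $\partial_x P_\lambda$ (which is localized at scale $\lambda^{-1}$, negligible against the wave packet separation scale $|tJ_\lambda|=t\lambda$), only the elliptic piece $(1-\chi_{\lambda_i})\varphi_{\lambda_i}$ contributes non-trivially for $\lambda_i\not\sim\lambda$. Then apply the elliptic bounds of Lemma \ref{Elliptic bounds for the derivative} for each such factor, using the difference-quotient versions $\|(1-\chi_{\lambda_i})\dq^y\varphi_{\lambda_i}\|_{L^\infty}\lesssim \lambda_i^{1/2}t^{-1}(\|\varphi\|_X+\|\varphi_{\lambda_i}\|_{L^2})$ and $\|(1-\chi_{\lambda_i})\dq^y\varphi_{\lambda_i}\|_{L^2}\lesssim t^{-1}(\|\varphi\|_X+\|\varphi_{\lambda_i}\|_{L^2})$ respectively for the $L^\infty$ and $L^2$ estimates. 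For the remaining factors, use the pointwise bound $\|\varphi_{\lambda_j}\|_{L^\infty}\lesssim \lambda_j^{-1/2-2\delta}t^{-1/2}\|\varphi\|_X$ from \eqref{Pointwise Vector Field Bound 2} (together with Bernstein on $\dq^y\varphi_{\lambda_j}$), and carry out the $\dq^y$ integration via Lemma \ref{Trilinear integral estimate} with exponents chosen to balance the three factors.

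\emph{Summation and main obstacle.} Finally, sum over unbalanced frequency configurations. For the HHL cascade the $L^2$ bound $\|\varphi_\mu\|_{L^2}\lesssim \mu^{-s}\|\varphi\|_{\dot H^s}$ with $s>3$ delivers fast convergence of the sum over $\mu\gg\lambda$; for cascades with $\lambda_3\ll\lambda$, convergence follows from $\|\varphi_{\lambda_3}\|_{L^2}\lesssim\lambda_3^{-s_0}\|\varphi\|_{\dot H^{s_0}}$ with $s_0<1$. The dyadic gap $\lambda_i\not\sim\lambda$ is precisely what produces the $\lambda^{-\delta}$ gain. The main technical hurdle is controlling the interaction of the frequency projector $\partial_x P_\lambda$ with the spatial cutoff $\chi_\lambda^1$, which I would handle by pairing with a dual $L^1$ bump supported in $\supp(\chi_\lambda^1)$ and exploiting that the kernel of $\partial_x P_\lambda$ lives at scale $\lambda^{-1}\ll t\lambda$; the resulting tails decay rapidly on the wave packet separation scale and are absorbed into the stated bounds.
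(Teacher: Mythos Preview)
Your approach is essentially the same as the paper's: both exploit that on $\supp\chi_\lambda^1$ any input $\varphi_{\lambda_i}$ with $\lambda_i\not\sim\lambda$ sits in its elliptic region, invoke the elliptic difference-quotient bounds of Lemma~\ref{Elliptic bounds for the derivative} for that factor and the pointwise vector-field bound \eqref{Pointwise Vector Field Bound 2} for the remaining ones, feed these into Lemma~\ref{Trilinear integral estimate}, and then sum dyadically; the paper handles the $P_\lambda$–cutoff interaction by the splitting $\chi_\lambda^1\partial_xP_\lambda=\chi_\lambda^1\partial_xP_\lambda\tilde\chi_\lambda+\chi_\lambda^1\partial_xP_\lambda(1-\tilde\chi_\lambda)$ together with the operator bound $\|\chi_\lambda^1\partial_xP_\lambda(1-\tilde\chi_\lambda)\|\lesssim\lambda^{1-N}t^{-N}$, which is exactly your kernel-tail argument. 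One small correction: since $a''(\xi)\sim\xi^{-1}$ here, the intervals $J_\lambda$ have uniformly bounded length, so $|tJ_\lambda|\approx t$ rather than $t\lambda$; the comparison $\lambda^{-1}\ll t$ is then precisely the condition $(t,v)\in\mathcal D$, and the argument goes through unchanged.
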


\begin{proof}
We shall denote
\begin{align*}
    I_{\lambda_1,\lambda_2,\lambda_3}&=\int_{\mathbb{R}}\sgn(y)\dq^y\varphi_{\lambda_1}\dq^y\varphi_{\lambda_2}\dq^y\varphi_{\lambda_3}\,dy
\end{align*}
and consider two cases in the frequency sum for $\partial_x P_\lambda Q^{unbal}$. 

First we consider the case in which we have two low separated frequencies. We assume without loss of generality that $\lambda_3=\lambda$ and $\lambda_1<\lambda_2\ll \lambda$. In this case, the elliptic estimates will be applied for the factor $\varphi_{\lambda_1}$. Precisely, from Lemma \ref{Trilinear integral estimate} and estimates \ref{Pointwise Vector Field Bound 2}, \ref{Pointwise Elliptic Estimate}, and \ref{L2 Elliptic Estimate}, we get that
\begin{align*}
    \left\|\chi^1_{\lambda}I_{\lambda_1,\lambda_2,\lambda_3}\right\|_{L_x^{\infty}}&\lesssim \lambda_1\frac{\lambda_1^{-1/2}}{t}\|\varphi\|_X(\lambda_2^{1-2\delta}+\lambda_2)\|\varphi_{\lambda_2}\|_{L_x^\infty}\lambda_3^{\delta}\|\varphi_{\lambda_3}\|_{L_x^\infty}\\
    &\lesssim \frac{\lambda_1^{1/2}}{t}\|\varphi\|_X\lambda_2^{\delta}(\lambda_2^{1-3\delta}+\lambda_2^{1-\delta})\|\varphi_{\lambda_2}\|_{L_x^\infty}\lambda^{-\frac32-2\delta}\lambda^{\frac32+3\delta}\|\varphi_{\lambda}\|_{L_x^\infty}\\
    &\lesssim \lambda_1^{1/2}\lambda_2^{\delta}\lambda^{-\frac32-2\delta}\frac{\|\varphi\|^3_X}{t^2}.
\end{align*}
By using dyadic summation in $\lambda_1$ and $\lambda_2$, we deduce that 

\begin{align*}
    \left\|\chi^1_{\lambda}\partial_x\sum_{\substack{
    \lambda_1<\lambda_2\ll \lambda}}I_{\lambda_1,\lambda_2,\lambda_3}\right\|_{L_x^{\infty}}&\lesssim \lambda^{-\delta}\frac{\|\varphi\|_X^3}{t^{2}}.
\end{align*}
Similarly, we deduce that 
\begin{align*}
    \left\|\chi^1_{\lambda}\partial_x\sum_{\substack{
    \lambda_1<\lambda_2\ll \lambda}}I_{\lambda_1,\lambda_2,\lambda_3}\right\|_{L_x^{2}}&\lesssim \lambda^{-\delta}\frac{\|\varphi\|_X^3}{t^{3/2}}
\end{align*}

We now analyze the situation in which $\lambda_1,\lambda_2\gtrsim \lambda$, and $\lambda_1$ and $\lambda_2$ are comparable and both separated from $\lambda$. Thus, we will be able to use $\lambda_1$ and $\lambda_2$ interchangeably.  We replace $\chi^1_{\lambda}$ by $\tilde{\chi}_{\lambda}$, which has double support, and equals $1$ on a comparably-sized neighbourhood of the support of $\chi^1_{\lambda}$. We write
\begin{align*}
    \chi^1_{\lambda}\partial_xP_\lambda=\chi^1_{\lambda}\partial_xP_\lambda\tilde{\chi}_{\lambda}+\chi^1_{\lambda}\partial_xP_\lambda(1-\tilde{\chi}_{\lambda}).
\end{align*}

For the first term, using Lemma \ref{Trilinear integral estimate}, along with estimates \ref{Pointwise Vector Field Bound 2}, \ref{Pointwise Elliptic Estimate}, \ref{L2 Elliptic Estimate}, we get the bounds 
\begin{align*}
    \left\|\chi^1_{\lambda}P_\lambda\tilde{\chi}_{\lambda}I_{\lambda_1,\lambda_2,\lambda_3}\right\|_{L_x^{\infty}}&\lesssim \lambda_2^{1/2+\delta}\frac{\lambda_3^{1-2\delta}+\lambda_3}{t}\|\varphi\|_X\|\varphi_{\lambda_2}\|_{L_x^\infty}\|\varphi_{\lambda_3}\|_{L_x^\infty}\\
    &\lesssim \lambda_2^{-1-\delta/2}\lambda_3^{\delta/2}\frac{\|\varphi\|_X}{t}(\lambda_3^{1-5\delta/2}+\lambda_3^{1-\delta/2})\|\varphi_{\lambda_3}\|_{L_x^\infty}\lambda_2^{3/2+3\delta/2}\|\varphi_{\lambda_2}\|_{L_x^\infty}\\
    &\lesssim\lambda_2^{-1-\delta/2}\lambda_3^{\delta/2}\frac{\|\varphi\|^3_X}{t^2}
\end{align*}
and
\begin{align*}
    \left\|\chi^1_{\lambda}P_\lambda\tilde{\chi}_{\lambda}I_{\lambda_1,\lambda_2,\lambda_3}\right\|_{L_x^{2}}&\lesssim \lambda_2^{\delta}\frac{\lambda_3^{1-2\delta}+\lambda_3}{t}\|\varphi\|_X\|\varphi_{\lambda_2}\|_{L_x^\infty}\|\varphi_{\lambda_3}\|_{L_x^\infty}\\
    &\lesssim \lambda_2^{-1-3\delta/2}\lambda_3^{\delta/2}\frac{\|\varphi\|_X}{t}(\lambda_3^{1-5\delta/2}+\lambda_3^{1-\delta/2})\|\varphi_{\lambda_3}\|_{L_x^\infty}\lambda_2^{1+5\delta/2}\|\varphi_{\lambda_2}\|_{L_x^\infty}\\
    &\lesssim\lambda_2^{-1-3\delta/2}\lambda_3^{\delta/2}\frac{\|\varphi\|^3_X}{t^2}.
\end{align*}
By using dyadic summation in $\lambda_1$, $\lambda_2$, and $\lambda_3$ (and by using the fact that $\lambda_1$ and $\lambda_2$ are close), we deduce the bound
\begin{align*}
   \left\|\chi^1_{\lambda}\partial_xP_\lambda\tilde{\chi}_{\lambda}\sum_{\substack{
    \lambda_3\lesssim \lambda_2,\lambda_1\simeq\lambda_2\gtrsim \lambda}}I_{\lambda_1,\lambda_2,\lambda_3}\right\|_{L_x^\infty\cap L_x^{2}}&\lesssim  \lambda^{-\delta}\frac{1}{t^2}\|\varphi\|^3_X. 
\end{align*}

We look at the second term. For every $N$, we know that
\begin{align*}
    \|\chi^1_{\lambda}\partial_xP_\lambda(1-\tilde{\chi}_{\lambda})\|_{L^2\rightarrow L^2}, \|\chi^1_{\lambda}\partial_xP_\lambda(1-\tilde{\chi}_{\lambda})\|_{L^\infty\rightarrow L^\infty}&\lesssim \frac{\lambda^{1-N}}{t^{N}}
\end{align*}
We take $N=\frac{3}{2}$. By carrying out a similar analysis as above, along with Lemma \ref{Trilinear integral estimate} and dyadic summation, we deduce that the contributions corresponding to these terms are also acceptable. 
\end{proof}
\begin{lemma}[Lemma 6.8 \cite{SQGzero}]\label{Semiclassical computation}
We have
\begin{align*}\chi_\lambda((x/t))^3Q(e^{it\phi(x/t)})&=(\chi_\lambda(x/t))^3e^{it\phi(x/t)}q(\phi'(x/t))+h(\lambda,t),
\end{align*}
where for every $a\in(0,1)$
\begin{align*}
|h(\lambda,t)|&\lesssim \frac{\lambda^3}{t^{2-3a}}+\frac{\lambda^2}{t^{1-a}}+\frac{1}{t^{2a}}
\end{align*}
\end{lemma}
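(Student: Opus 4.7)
The approach is a semiclassical expansion in the difference variable $y$. Set $\Phi(x) := t\phi(x/t)$, $v_x := \phi'(x/t)$, and introduce the phase factors
\[
D(y) := e^{i(\Phi(x+y)-\Phi(x))} - 1, \qquad D_0(y) := e^{iyv_x} - 1.
\]
Both sides of the identity admit parallel representations as integrals of $\sgn(y)|D|^2 D/y^3$ and $\sgn(y)|D_0|^2 D_0/y^3$ respectively, the latter defining $q(v)$ as a convergent integral (the integrand is $O(|v|^3)$ near $y = 0$ and $O(|y|^{-3})$ at infinity). After multiplying by $\chi_\lambda(x/t)^3$, the task reduces to bounding
\[
I(x) := \int \sgn(y)\,\frac{|D|^2 D - |D_0|^2 D_0}{y^3}\,dy.
\]

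My plan is to split this integral at the scale $|y| = t^a$. In the far field $|y| > t^a$, the trivial bound $|D|, |D_0| \leq 2$ makes the integrand $O(|y|^{-3})$, contributing the $1/t^{2a}$ piece of the error. In the near field $|y| \leq t^a$, I would Taylor expand $\Phi(x+y) - \Phi(x) = yv_x + R(y)$. Since $\Phi''(x) = \phi''(x/t)/t$ and the explicit formula $\phi''(v) = \frac{1}{2}e^{-1-v/2}$ gives $|\phi''| \sim \lambda$ on a bounded neighborhood of the support of $\chi_\lambda$, the Taylor remainder obeys $|R(y)| \lesssim \lambda y^2/t$. Writing $D = D_0 + E$ with $E := e^{iyv_x}(e^{iR(y)} - 1)$ then yields $|E| \leq |R| \lesssim \lambda y^2/t$.

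The algebraic identity
\[
|D|^2 D - |D_0|^2 D_0 = 2|D_0|^2 E + D_0^2 \bar E + 2 D_0 |E|^2 + E^2 \bar D_0 + |E|^2 E
\]
partitions the pointwise error into terms of order $|D_0|^2|E|$, $|D_0||E|^2$, and $|E|^3$. Bounding one factor of $|D_0|$ by $\lambda|y|$ and exchanging the remaining factor for the saturated constant via $|D_0|^2 \leq 2|D_0|$ in the first group, integration over $|y| \leq t^a$ yields respectively $\lambda^2/t^{1-a}$, $\lambda^3/t^{2-3a}$, and $\lambda^3 t^{4a-3}$; the last is absorbed into the second for $a \in (0,1)$. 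Summing with the far-field estimate gives the claim.

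The delicate step is the use of $|D_0|^2 \leq 2|D_0|$ in the first group: the uniform polynomial bound $|D_0|^2 \leq \lambda^2 y^2$ would instead produce $\lambda^3/t^{1-2a}$, spoiling the $\lambda^2$ power in the target error. This ``trade one factor of $\lambda|y|$ for a constant'' step is what pins down the sharp error structure $\lambda^3/t^{2-3a} + \lambda^2/t^{1-a} + 1/t^{2a}$ stated in the lemma.
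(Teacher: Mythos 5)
Your argument is correct and is essentially the proof behind the cited Lemma 9.8 of \cite{SQGzero} (which this paper quotes without reproof): factor out $e^{it\phi(x/t)}$, split the $y$-integral at $|y|=t^a$, use the trivial bound $|D|,|D_0|\le 2$ in the far field to get $t^{-2a}$, and in the near field Taylor-expand the phase with $|\Phi''|\lesssim \lambda/t$ so that $|E|\lesssim \lambda y^2/t$, producing exactly the $\lambda^2/t^{1-a}$ and $\lambda^3/t^{2-3a}$ terms (with the $|E|^3$ contribution absorbed). Your observation that one must trade one factor of $|D_0|$ for the saturated constant rather than using $|D_0|^2\le \lambda^2y^2$ is precisely what reproduces the stated error structure, so nothing is missing.
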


\
This result can be viewed as a semiclassical expansion of the cubic form $Q$ applied to the wave packet phase correction, and will be useful in deriving an asymptotic ordinary differential equation for the profile $\gamma^\lambda$.
\subsection{The asymptotic equation for $\gamma$}

Here we record the error bounds for the asymptotic equation for $\gamma$. The proof follows precisely that of Proposition 6.9 in \cite{SQGzero}. The change in exponents corresponds to the one in Lemma \ref{Unbalanced cubic estimate}.

\begin{proposition}\label{Asymptotic equation}
Let $v\in J_\lambda$. Under the assumption $(t,v)\in\mathcal{D}$, we have
\begin{align*}
    \dot{\gamma}(t,v)=iq(\xi_v)\xi_vt^{-1}\gamma(t,v)|\gamma(t,v)|^2+f(t,v),
\end{align*}
where
\begin{align*}
    |f(t,v)| &\lesssim \lambda^{-\delta}g(\lambda)t^{-1-\delta+C\eps^2}\eps,
\end{align*}
where $g(\lambda)$ is a finite sum of powers of $\lambda$, and
\begin{align*}
    \|f(t,v)\|_{L_v^2(J_{\lambda})}&\lesssim \lambda^{-\delta}(1+\lambda^{-\half})t^{-1-\delta+C\eps^2}\eps.
\end{align*}
\end{proposition}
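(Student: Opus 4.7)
The plan is to obtain the asymptotic ODE by differentiating the defining pairing $\gamma^\lambda(t,v)=\langle \varphi,\pax^v_\lambda\rangle_{L^2_x}$ and playing the SQG equation off the wave-packet equation. Writing $\dot\gamma^\lambda=\langle \D_t\varphi,\pax^v_\lambda\rangle+\langle \varphi,\D_t\pax^v_\lambda\rangle$, substituting \eqref{SQG} for the first piece and Lemma~\ref{l:paxsoln} for the second, the skew-adjointness of $\logd\,\D_x$ cancels the linear dispersive contributions, leaving
\[
\dot\gamma^\lambda(t,v) \;=\; \langle Q(\varphi,\D_x\varphi),\pax^v_\lambda\rangle + t^{-3/2}\langle \varphi_\lambda,\tilde L\pax^{v,I}+\rpax^v\rangle + \text{(comm. with $P_\lambda$)}.
\]
The wave-packet error is handled by integrating $\tilde L$ by parts onto $\varphi_\lambda$ and invoking $\|L\D_x\varphi_\lambda\|_{L^2}\lesssim\|\varphi\|_X$ from \eqref{Vector Field Energy Estimate}, which contributes to $f(t,v)$ of the asserted size both pointwise in $v\in J_\lambda$ and in $L^2(J_\lambda)$.

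Next I would split $Q=Q^{bal}+Q^{unbal}$. The unbalanced part is treated directly using Lemma~\ref{Unbalanced cubic estimate}: the $L^\infty_x$ and $L^2_x$ bounds (which already carry a factor $\lambda^{-\delta}$) are paired against the standard $L^1$ and $L^2$ estimates for $\pax^v_\lambda$, and together with $\|\varphi\|_X\lesssim \eps t^{C\eps^2}$ they produce the desired $\lambda^{-\delta}t^{-1-\delta+C\eps^2}\eps$ decay. For the principal balanced term $\langle Q^{bal}(\varphi,\D_x\varphi),\pax^v_\lambda\rangle$, I would replace each copy of $\varphi_\lambda$ by its asymptotic profile $t^{-1/2}\chi_\lambda(x/t)\gamma^\lambda(t,x/t)e^{-it\phi(x/t)}$ using Lemma~\ref{Error bounds 1}, then freeze $\gamma^\lambda(t,x/t)$ to $\gamma^\lambda(t,v)$ on the wave-packet scale via Lemma~\ref{Beta bounds}; the $\D_x$ factor produces $-i\phi'(x/t)\approx -i\xi_v$. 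Lemma~\ref{Semiclassical computation} then collapses the trilinear difference-quotient integral on the pure-phase profile to multiplication by $q(\xi_v)$, and pairing the result against $\overline{\pax^v_\lambda}$ (so that the $\pm it\phi$ phases match and $\int\chi_\lambda^4\,dx\sim t$) produces exactly the main term $iq(\xi_v)\xi_v t^{-1}\gamma(t,v)|\gamma(t,v)|^2$.

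The remaining work is careful bookkeeping of the errors at each approximation stage. The substitution errors are trilinear difference-quotient integrals containing at least one factor of $r^\lambda$ or $\beta^\lambda_v$, which I would control via Lemma~\ref{Trilinear integral estimate} by distributing derivatives using Lemma~\ref{Gamma bounds} for $\gamma^\lambda$, Lemma~\ref{Error bounds 1} for $r^\lambda$, Lemma~\ref{Beta bounds} for $\beta^\lambda_v$, and the elliptic estimates of Lemma~\ref{Elliptic bounds for the derivative} in the complement of $\{\chi_\lambda\neq 0\}$. Feeding these with the bootstrap $\|\varphi\|_Y\lesssim C\eps\, t^{-1/2}$ and the energy bound \eqref{Vector Field Energy Estimate} gives at most $\lambda^{-\delta}g(\lambda)t^{-1-\delta+C\eps^2}\eps$ pointwise and $\lambda^{-\delta}(1+\lambda^{-1/2})t^{-1-\delta+C\eps^2}\eps$ in $L^2_v(J_\lambda)$, matching the statement. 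The main obstacle is preserving the extra $\lambda^{-\delta}$ gain simultaneously with $t^{-1-\delta}$ temporal decay through all three stages (wave-packet error, unbalanced/balanced split, constant-profile substitution); this improvement over the analogous computation in \cite{SQGzero} is precisely what Lemma~\ref{Unbalanced cubic estimate}, together with the refined elliptic bounds of Lemma~\ref{Elliptic bounds for the derivative}, is designed to deliver.
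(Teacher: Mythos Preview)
Your outline is correct and matches the paper's approach exactly: the paper states that the proof follows precisely that of Proposition~6.9 in \cite{SQGzero}, with the only change being the improved exponents supplied by Lemma~\ref{Unbalanced cubic estimate}, and your sketch reproduces that argument in full---differentiating the pairing, cancelling the linear dispersive pieces, splitting $Q=Q^{bal}+Q^{unbal}$, handling the unbalanced part via Lemma~\ref{Unbalanced cubic estimate}, and reducing the balanced part to the asymptotic ODE via Lemmas~\ref{Error bounds 1}, \ref{Beta bounds}, and \ref{Semiclassical computation}. You have also correctly identified that the extra $\lambda^{-\delta}$ gain relative to \cite{SQGzero} is precisely the content of the updated Lemma~\ref{Unbalanced cubic estimate}.
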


\subsection{Closing the bootstrap argument}
We recall that
\begin{align*}
  \|\varphi_\lambda\|_{L_x^{\infty}}
  &\lesssim \frac{1}{\sqrt{t}}\lambda^{-(1-\delta-\delta_1)}\|\varphi\|_X\lesssim \frac{1}{\sqrt{t}}\lambda^{-(1-\delta-\delta_1)}\eps t^{C\eps^2},
\end{align*}
when $\lambda\leq 1$
and 
\begin{align*}
  \|\varphi_\lambda\|_{L_x^{\infty}}
  \lesssim \frac{1}{\sqrt{t}}\lambda^{-(\frac32+3\delta/2)}\|\varphi\|_X\lesssim\frac{1}{\sqrt{t}}\lambda^{-(\frac32+3\delta/2)}\epsilon t^{C\epsilon^2},
\end{align*}
when $\lambda>1$.

Thus, if $\displaystyle t\lesssim \lambda^{N}$ when $\lambda>1$, and if $\displaystyle t\lesssim \lambda^{-N}$ when $\lambda\leq 1$, where $N$ can be chosen appropriately, we get the desired bounds. We are left to analyze the cases $\displaystyle t\gtrsim \lambda^{N}$ when $\lambda>1$, and $\displaystyle t\gtrsim \lambda^{-N}$ when $\lambda\leq 1$.

We recall that in the elliptic region,
\begin{align*}
    \||D_x|^{\delta}((1-\chi_{\lambda})\varphi_\lambda(x))\|_{L_x^{\infty}}&\lesssim \frac{\lambda^{\delta-1/2}}{t}(\|\varphi\|_X+\|\varphi_\lambda\|_{L_x^2})\lesssim\frac{\lambda^{1/2-\delta}+\lambda^{-\delta}}{t}\eps t^{C^2\eps^2}\\
    \||D_x|^{\frac12+\delta}\partial_x((1-\chi_{\lambda})\varphi_\lambda(x))\|_{L_x^{\infty}}&\lesssim \frac{\lambda^{1+\delta}}{t}(\|\varphi\|_X+\|\varphi_\lambda\|_{L_x^2})\lesssim\frac{\lambda^{1+\delta}+\lambda^{1/2+\delta}}{t}\eps t^{C^2\eps^2}
\end{align*}
which give the desired bounds in both cases. It remains to bound the non-elliptic region $\displaystyle \chi_{\lambda}\varphi_\lambda$.
We recall that, if $x/t\in J_{\lambda}$, and $\displaystyle r(t,x)=\chi_{\lambda}\varphi_\lambda(t,x)-\frac{1}{\sqrt{t}}\chi_{\lambda}\gamma(t,x/t)e^{it\phi(x/t)}$,
\begin{align*}
   t^{1/2}\|r^\lambda\|_{L_x^{\infty}}&\lesssim t^{-1/4}\lambda^{-5/4}\eps t^{C\eps^2} 
\end{align*}

When $\lambda\leq 1$ and $t\gtrsim \lambda^{-N}$, we note that
\begin{align*}
  t^{-1/4}\lambda^{-5/4}\eps t^{C\eps^2}&\lesssim \lambda^{-(1-\delta-\delta_1)}\eps.   
\end{align*}
 This is true because it is equivalent to
\begin{align*}
    \lambda^{-(1/4+\delta+\delta_1)}\lesssim t^{1/4-C\eps^2}.
\end{align*}

When $\lambda>1$ and $t\gtrsim \lambda^N$, we can see that
\begin{align*}
  t^{-1/4}\lambda^{-5/4}\eps t^{C\eps^2}&\lesssim \lambda^{-(\frac32+3\delta/2)}\eps.   
\end{align*}
This is true because it is equivalent to
\begin{align*}
    \lambda^{1/4+3\delta/2}\lesssim t^{1/4-C\eps^2}.
\end{align*}

This means that we only need the bounds
\begin{align*}
   |\gamma(t,v)|\lesssim\eps \lambda^{-(1-\delta-\delta_1)} 
\end{align*}
when $\lambda\leq 1$, and
\begin{align*}
    |\gamma(t,v)|\lesssim\eps \lambda^{-(\frac32+3\delta/2)} 
\end{align*}
when $\lambda>1$.
By initializing at time $t=1$, up to which the bounds are known to be true from the energy estimates, and by using Proposition \ref{Asymptotic equation}, we reach the desired conclusion.

\section{Modified scattering}\label{s:scattering}
 In this section we discuss the modified scattering behaviour of the global solutions constructed in Section \ref{s:gwp}. We recall that the solutions of \eqref{SQG} have conserved mass (as llng as it is well-defined):
\begin{proposition}[Proposition 7.1, \cite{SQGzero}]\label{Conservation of mass}
   For solutions $\varphi$ of \eqref{SQG}, $\|\varphi(t)\|^2_{L^2}$ is conserved in time.  
\end{proposition}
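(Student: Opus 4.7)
The plan is to differentiate $\|\varphi(t)\|_{L^2}^2$ in time, use the equation \eqref{SQG}, and check that both the linear dispersive term and the nonlinear term $Q(\varphi,\varphi_x)$ contribute zero. Since the statement is assumed under the a priori hypothesis that the mass is well-defined (i.e.\ finite), I would first work with smooth, spatially decaying solutions produced by Theorem~\ref{t:lwp} and the local theory from \cite{SQGzero}, so that all manipulations below are absolutely convergent and the derivative passes through the integral; the general case then follows by approximation. Writing
\[
\tfrac{d}{dt}\|\varphi\|_{L^2}^2 = 2\int \varphi\,\partial_t\varphi\,dx = 4\int\varphi\cdot\log|D_x|\partial_x\varphi\,dx + 2\int \varphi\cdot Q(\varphi,\varphi_x)\,dx,
\]
the first term vanishes because $\log|D_x|\partial_x$ is the Fourier multiplier with symbol $i\xi\log|\xi|$, which is purely imaginary; the operator is therefore skew-adjoint on $L^2$.

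The remaining task is to show $I := \int\varphi\cdot Q(\varphi,\varphi_x)\,dx = 0$. I would rewrite $Q$ using the kernel form
\[
K(x,y) = \frac{1}{|y|} - \frac{1}{\sqrt{y^2+(\varphi(x+y)-\varphi(x))^2}},
\]
so that $Q(\varphi,\varphi_x)(x) = \int K(x,y)(\varphi_x(x+y)-\varphi_x(x))\,dy$, and exploit the endpoint symmetry $K(x,y)=K(x+y,-y)$. Applying the change of variables $(x,y)\mapsto(x+y,-y)$ to $I$ and averaging with the original expression yields the symmetric form
\[
2I = -\int\!\!\int K(x,y)\,\bigl[\varphi(x+y)-\varphi(x)\bigr]\bigl[\varphi_x(x+y)-\varphi_x(x)\bigr]\,dy\,dx.
\]
Setting $A := \varphi(x+y)-\varphi(x)$, we have $A\,\partial_x A = \tfrac12\partial_x(A^2)$, so integrating by parts in $x$ gives $2I = \tfrac12 \int\!\!\int \partial_x K(x,y)\cdot A^2\,dy\,dx$, where $\partial_x K = A\partial_x A/(y^2+A^2)^{3/2}$.

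The final computation uses the algebraic identity
\[
\frac{A^3\,\partial_x A}{(y^2+A^2)^{3/2}} \;=\; \partial_x\!\left(\sqrt{y^2+A^2}-|y|\right) \;-\; y^2\cdot\frac{A\,\partial_x A}{(y^2+A^2)^{3/2}},
\]
obtained by writing $A^2 = (y^2+A^2) - y^2$ in the numerator. Integrating in $x$, the first term vanishes as an exact $x$-derivative (of a quantity which is integrable in $x$ for each fixed $y\neq 0$, uniformly locally near $y=0$ by Taylor expansion), and the second is $-y^2\partial_x K(x,y)$, whose $x$-integral also vanishes. This yields $I=0$ and completes the proof. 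The main obstacle I anticipate is purely technical: justifying the Fubini/IBP manipulations and the absolute convergence of the double integrals near $y=0$; this is handled by the quadratic vanishing $F(s)=O(s^2)$ combined with $\varphi\in H^s$ for $s>2$, so that $K$ is bounded near $y=0$ and the endpoint substitution is legitimate.
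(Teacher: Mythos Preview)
Your argument is correct. The skew-adjointness of $\log|D_x|\partial_x$ disposes of the linear term, the endpoint symmetry $K(x,y)=K(x+y,-y)$ is exactly right and leads to the symmetric form, and the algebraic identity
\[
\frac{A^{3}\,\partial_x A}{(y^{2}+A^{2})^{3/2}}
=\partial_x\Bigl(\sqrt{y^{2}+A^{2}}-|y|\Bigr)-y^{2}\,\partial_x K(x,y)
\]
checks out. One small point worth making explicit: after the split, the two pieces are not separately absolutely integrable over $\R_x\times\R_y$ (each is only $O(|y|^{-1})$ for large $|y|$), so you should apply Fubini to the combined integrand $\frac{A^{3}\partial_x A}{(y^{2}+A^{2})^{3/2}}$ (which \emph{is} absolutely integrable), and only then, for each fixed $y\neq 0$, use the pointwise identity and the fundamental theorem of calculus in $x$. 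You effectively do this, but the order of operations deserves a sentence.

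As for comparison with the paper: the present article does not give a proof of this proposition at all---it is simply quoted from \cite{SQGzero} (Proposition~10.1 there). So there is no in-paper argument to compare against; your self-contained computation is a legitimate proof of the stated conservation law.
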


Recall the asymptotic equation
\begin{align*}
    \dot{\gamma}(t,v)&=iq(\xi_v)\xi_v t^{-1}\left|\gamma(t,v)\right|^2\gamma(t,v)+f(t,v),
\end{align*}

As $t\rightarrow\infty$, $\gamma(t,v)$ converges to the solution of the equation
\begin{align*}
    \dot{\tilde{\gamma}}(t,v)&=iq(\xi_v)\xi_vt^{-1}\tilde{\gamma}(t,v)|\tilde{\gamma}(t,v)|^2,
\end{align*}
whose solution is 
\begin{align*}
    \tilde{\gamma}(t,v)&=W(v)e^{iq(\xi_v)\xi_v\ln(t)|W(v)|^2}
\end{align*}
We can immediately see that
$W(v)$ is well-defined, as
$|W(v)|=|\tilde{\gamma}(t,v)|$, which is a constant, and
\[
W(v)=\lim_{\substack{s\rightarrow\infty}}\tilde{\gamma}(e^{2s\pi/(q(\xi_v)\xi_v|W(v)|^2)},v).
\]

\begin{corollary}\label{Asymptotic expansions}
Let $v\in J_\lambda$. Under the assumption $(t,v)\in\mathcal{D}$, we have the asymptotic expansions
\begin{equation}\label{Wdiff}
    \|\gamma(t,v) - W(v)e^{ i q(\xi_v)\xi_v\log t |W(v)|^2}\|_{L^\infty(J_\lambda)} \lesssim \lambda^{-\delta}g(\lambda)t^{-\delta+C^2\eps^2}\eps.
\end{equation}
\begin{equation}\label{Wdiff2}
    \|\gamma(t,v) - W(v)e^{ i q(\xi_v)\xi_v\log t |W(v)|^2}\|_{L^2(J_\lambda)} \lesssim \lambda^{-\delta}(1+\lambda^{-3/2})t^{-\delta+C^2\eps^2}\eps.
\end{equation}
\end{corollary}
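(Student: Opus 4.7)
The plan is to integrate the asymptotic ODE from Proposition~\ref{Asymptotic equation} by factoring out the leading-order cubic phase, and then to peel the $|W|^2 \log t$ term off of the resulting phase integral. Fix $\lambda$ and choose $t_0 = t_0(\lambda) \sim \lambda^{-1}$ so that $(t_0, v) \in \mathcal{D}$ for every $v \in J_\lambda$. Write $\mu(v) := q(\xi_v)\xi_v$, and introduce the phase-corrected profile
\[
W_0(t,v) := \gamma(t,v)\exp\!\left(-i\mu(v)\int_{t_0}^{t} \frac{|\gamma(s,v)|^2}{s}\,ds\right).
\]
A direct computation using Proposition~\ref{Asymptotic equation} gives $\partial_t W_0(t,v) = f(t,v) \exp(-i\mu(v)\int \cdots)$, so $|\partial_t W_0| = |f|$ pointwise and $\|\partial_t W_0\|_{L^2(J_\lambda)} = \|f\|_{L^2(J_\lambda)}$. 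Since the exponents $-1-\delta + C\eps^2$ from Proposition~\ref{Asymptotic equation} are summable, integrating from $t$ to $\infty$ and using Minkowski's inequality in the $L^2$ case shows that $W_0(t,v)$ converges to a limit $W_\infty(v)$ with rates
\[
|W_0(t,v) - W_\infty(v)| \lesssim \lambda^{-\delta}g(\lambda)\, t^{-\delta+C\eps^2}\eps, \qquad \|W_0(t,v) - W_\infty(v)\|_{L^2(J_\lambda)} \lesssim \lambda^{-\delta}(1+\lambda^{-\frac12}) t^{-\delta+C\eps^2}\eps.
\]
Since the defining exponential has unit modulus, $|W_0| = |\gamma|$, so $|W_\infty(v)| = \lim_{t\to\infty} |\gamma(t,v)|$ is a well-defined nonnegative function, bounded by the pointwise bootstrap for $\gamma$.

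Next, I peel $|W_\infty|^2 \log t$ out of the phase. Write
\[
\int_{t_0}^{t} \frac{|\gamma(s,v)|^2}{s}\,ds = |W_\infty(v)|^2 \log\!\left(\tfrac{t}{t_0}\right) + \int_{t_0}^{t} \frac{|W_0(s,v)|^2 - |W_\infty(v)|^2}{s}\,ds.
\]
Factoring $|W_0|^2 - |W_\infty|^2 = (|W_0| + |W_\infty|)(|W_0| - |W_\infty|)$ and pairing the pointwise bootstrap bound on $|\gamma|$ with the $L^\infty$/$L^2$ decay bounds for $W_0 - W_\infty$ from the previous step makes the integrand absolutely integrable on $[t_0, \infty)$. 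Hence the right-hand side equals $|W_\infty(v)|^2 \log t + \Phi_\infty(v) + R(t,v)$, with $\Phi_\infty$ a bounded function of $v$ and a tail error $R(t,v)$ satisfying
\[
|R(t,v)| \lesssim \lambda^{-\delta}g(\lambda)\eps^2\, t^{-\delta + C\eps^2}, \qquad \|R(t,v)\|_{L^2(J_\lambda)} \lesssim \lambda^{-\delta}(1+\lambda^{-\frac32})\eps^2\, t^{-\delta + C\eps^2},
\]
where the extra $\lambda^{-1}$ in the $L^2$ bound comes from using the $L^\infty_v$ bootstrap bound on $|\gamma|$ (the worst case being $|\gamma| \lesssim \eps \lambda^{-1}$ for $\lambda \leq 1$) to pair with the $L^2$ bound on $|W_0| - |W_\infty|$.

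To finish, set $W(v) := W_\infty(v) e^{i\mu(v)\Phi_\infty(v)}$. Since $|W(v)| = |W_\infty(v)|$, the reference phase $\mu(v)|W(v)|^2 \log t$ coincides with $\mu(v)|W_\infty(v)|^2 \log t$. From
\[
\gamma(t,v) = W_0(t,v) \exp\!\bigl(i\mu(v)[|W_\infty(v)|^2 \log t + \Phi_\infty(v) + R(t,v)]\bigr),
\]
subtracting $W(v) e^{i\mu(v)|W(v)|^2 \log t}$ and factoring out the unimodular factor $e^{i\mu(v)|W_\infty|^2\log t}$ bounds the difference by $|W_0(t,v) - W_\infty(v)| + |W_\infty(v)|\,|\mu(v)|\,|R(t,v)|$. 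Applying the two sets of estimates above, and absorbing the polynomial factor $|\mu(v)|$ (controlled by a power of $\lambda$ on $J_\lambda$) into $g(\lambda)$ (respectively into the $(1+\lambda^{-3/2})$ factor) yields \eqref{Wdiff} and \eqref{Wdiff2}.

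The main technical obstacle is producing the sharp $\lambda$-dependence in the $L^2$ bound: the interchange of norms in the Duhamel-type integral is harmless, but tracking exactly where the extra $\lambda^{-1}$ enters requires pairing the $L^\infty_v$ bootstrap bound on $|\gamma|$ from Section~\ref{s:gwp} against the $L^2$ bound in Proposition~\ref{Asymptotic equation} at the right moment, and then checking that the subsequent exponentiation $e^{i\mu(v) R}$ only multiplies by $|\mu(v)|$, a tame power of $\lambda$ absorbable into $g(\lambda)$ or into $(1+\lambda^{-3/2})$.
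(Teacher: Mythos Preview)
Your proof is correct and is precisely the standard integration-of-the-asymptotic-ODE argument that the paper has in mind; the paper's own proof is the single sentence ``This is an immediate consequence of Proposition~\ref{Asymptotic equation},'' so you have simply spelled out the details that the authors left implicit, including the phase-peeling step that accounts for the extra $\lambda^{-1}$ in the $L^2$ bound.
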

\begin{proof}
This is an immediate consequence of Proposition \ref{Asymptotic equation}.
\end{proof}
\begin{proposition}
Under the assumption
\begin{align*}
    \|\varphi_0\|_{X}\lesssim\eps\ll 1,
\end{align*}
the asymptotic profile $W$ defined above satisfies 
\begin{align*}
\|e^{-\frac{v(1+\delta)}{2}}e^{|v|(1/2+\delta/4)}|D_v|^{1-C_1\epsilon^2}W(v)\|_{L_v^2}\lesssim\epsilon.
\end{align*}
Moreover, when $s_0=0$, we also have $\|W(v)\|_{L_v^2}\lesssim\eps$.
\end{proposition}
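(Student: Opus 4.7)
\emph{Proof Plan.}

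My plan is to derive both bounds by transferring information from the energy-type estimates on $\varphi(t)$ to the asymptotic profile $W$, via the intermediate profile $\gamma^\lambda(t,v)$ and the modified scattering convergence in Corollary~\ref{Asymptotic expansions}. The $L^2$ bound in the case $s_0=0$ follows from mass conservation: by the wave packet representation of $\varphi_\lambda$ in Lemma~\ref{Error bounds 1} combined with the elliptic bounds of Lemma~\ref{Elliptic bounds for the derivative}, the change of variables $v = x/t$ yields $\|\chi_\lambda \gamma^\lambda(t,\cdot)\|_{L^2_v(J_\lambda)}^2 = \|\varphi_\lambda(t)\|_{L^2_x}^2 + o(1)$ as $t\to\infty$. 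Summing dyadically and invoking Proposition~\ref{Conservation of mass}, $\sum_\lambda \|\chi_\lambda\gamma^\lambda(t)\|_{L^2_v}^2 \to \|\varphi_0\|_{L^2}^2 \lesssim \eps^2$. Since $\gamma^\lambda \to We^{i\theta}$ in $L^2(J_\lambda)$ and $|We^{i\theta}|=|W|$, this gives $\|W\|_{L^2} \lesssim \eps$.

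For the weighted fractional derivative bound, I work dyadically in $\lambda$. Using $v_\lambda = -2 - 2\log\lambda$, the weight $e^{-v(1+\delta)/2}e^{|v|(1/2+\delta/4)}$ restricted to $J_\lambda$ is essentially a constant $w_\lambda$ with $w_\lambda^2 \approx \lambda^\delta$ for $\lambda\le 1$ and $w_\lambda^2 \approx \lambda^{4+3\delta}$ for $\lambda > 1$; these powers are compatible with the $\dot H^{s_0}$ and $\dot H^s$ components of the $X$-norm, so the wave packet identity provides $w_\lambda \|\chi_\lambda \gamma^\lambda(t)\|_{L^2_v(J_\lambda)} \lesssim \|\varphi\|_X$. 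Combining this with the vector-field-based derivative bound for $\partial_v\gamma^\lambda$ from Lemma~\ref{Gamma bounds} together with the energy estimate $\|L\partial_x\varphi\|_{L^2} \lesssim \eps\, t^{C\eps^2}$, Sobolev interpolation between the $L^2$ and $H^1$ levels yields
\begin{equation*}
w_\lambda \||D_v|^{1-C_1\eps^2}\chi_\lambda\gamma^\lambda(t)\|_{L^2_v(J_\lambda)} \lesssim \eps\, t^{C(1-C_1\eps^2)\eps^2},
\end{equation*}
with $\lambda$-losses that are summable by the compatibility of $w_\lambda$ with $\|\varphi\|_X$.

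The last step passes from $\gamma$ to $W$. Interpolating the $L^2$ decay $\|\gamma - We^{i\theta}\|_{L^2(J_\lambda)} \lesssim \lambda^{-\delta}(1+\lambda^{-3/2})t^{-\delta+C^2\eps^2}\eps$ of Corollary~\ref{Asymptotic expansions} against the $H^1$-type growth of $\gamma - We^{i\theta}$ produces
\begin{equation*}
w_\lambda \||D_v|^{1-C_1\eps^2}(\gamma - We^{i\theta})\|_{L^2_v(J_\lambda)} \lesssim (t^{-\delta+C^2\eps^2})^{C_1\eps^2}(t^{\text{growth}})^{1-C_1\eps^2}\eps,
\end{equation*}
which decays in $t$ once $C_1$ is chosen sufficiently large relative to $C$ and $\delta$. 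Letting $t\to\infty$ and summing in $\lambda$ then gives the desired weighted $H^{1-C_1\eps^2}$-control on $We^{i\theta}$; the remaining passage from $We^{i\theta}$ to $W$ introduces only logarithmic derivative factors $\partial_v\theta \sim \log t \cdot \partial_v(q(\xi_v)\xi_v |W|^2)$, which are absorbed into the $C_1\eps^2$ derivative loss via fractional Leibniz. I expect the main obstacle to be organizing this interpolation cleanly in the weighted dyadic setting and then removing the time-dependent phase to obtain a bound on $W$ itself rather than $We^{i\theta}$; the cleanest route may be to work directly with a polar decomposition of $\gamma$ and estimate its modulus and argument separately.
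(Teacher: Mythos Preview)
Your plan has a genuine gap in the phase-removal step. You propose to first bound $w_\lambda\||D_v|^{1-C_1\eps^2}\gamma^\lambda\|_{L^2(J_\lambda)}$, then control $\||D_v|^{1-C_1\eps^2}(\gamma - We^{i\theta})\|_{L^2(J_\lambda)}$ by interpolation, deduce a bound on $We^{i\theta}$, and finally strip the phase. This fails at two points. First, to interpolate $\gamma - We^{i\theta}$ at the $\dot H^{1-C_1\eps^2}$ level you need an $\dot H^1$ bound on $\gamma - We^{i\theta}$, which you do not have and which would be circular (it presupposes regularity of $W$). Second, and more fundamentally, your claim that the $\log t$ factor in $\partial_v\theta$ is ``absorbed into the $C_1\eps^2$ derivative loss via fractional Leibniz'' is wrong: a derivative loss is a sacrifice of $v$-regularity, not a mechanism to cancel growth in $t$. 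Since $\theta = q(\xi_v)\xi_v(\log t)|W|^2$ depends on $t$, the quantity $\|We^{i\theta}\|_{\dot H^{1-C_1\eps^2}_v}$ genuinely grows with $t$, so a $t$-uniform bound on it does not yield one on $W$.

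The paper sidesteps this entirely by reversing the roles: it puts the phase on $\gamma$ rather than on $W$. Define $A(t,v) = e^{-iq(\xi_v)\xi_v(\log t)|\gamma(t,v)|^2}\gamma(t,v)$, with $|\gamma|^2$ (not $|W|^2$) in the exponent. Then Corollary~\ref{Asymptotic expansions} gives $\|W - A(t)\|_{L^2(J_\lambda)} \lesssim \lambda^{-\delta}(1+\lambda^{-3/2})t^{-\delta+C^2\eps^2}\eps$, while product and chain rules together with Lemma~\ref{Gamma bounds} give $\|A(t)\|_{\dot H^1_v(J_\lambda)} \lesssim \lambda^{-\delta}(1+\lambda^{-2})(\log t)\,\eps\, t^{C^2\eps^2}$. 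The point is that $W$ is $t$-independent, so for every $t$ one has a decomposition $W = A(t) + (W - A(t))$ with the $\dot H^1$ piece growing and the $L^2$ piece decaying; this is exactly a $K$-functional bound, and real interpolation at level $1 - C_1\eps^2$ (with $C_1$ large enough relative to $C^2/\delta$) produces $\|W\|_{\dot H^{1-C_1\eps^2}(J_\lambda)} \lesssim \lambda^{-\delta}(1+\lambda^{-2})\eps$ uniformly. Dyadic summation then gives the weighted estimate. Your closing instinct about a polar-type decomposition is the right direction; the paper's choice of $A(t)$ is precisely the clean realization of it.
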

\begin{proof}
We fix $\lambda$, and let $t\gtrsim\max\{1,\lambda^{-1}\}$. From Corollary \ref{Asymptotic expansions} we know that
\begin{align*}
\|W(v)-e^{- i q(\xi_v)\xi_v\log t |\gamma(t,v)|^2}\gamma(t,v)\|_{L_v^2(J_{\lambda})}\lesssim \lambda^{-\delta}(1+\lambda^{-3/2})t^{-\delta+C^2\eps^2}\eps.
\end{align*}
From the product and chain rules with Lemma \ref{Gamma bounds}, we have 
\begin{align*}
    \left\|\partial_v\left(e^{-i q(\xi_v)\xi_v\log t |\gamma(t,v)|^2}\gamma(t,v)\right)\right\|_{L_v^2(J_{\lambda})}&\lesssim \lambda^{-\delta}(1+\lambda^{-2})\log(t)\eps t^{C^2\eps^2}.
\end{align*}
Putting these together, we find that for $t \gtrsim \max\{1,\lambda^{-1}\}$,
\begin{align*}
W(v)=O_{\dot{H}^1_v(J_\lambda)}(\lambda^{-\delta}(1+\lambda^{-2})\log(t)\eps t^{C^2\eps^2})+O_{L_v^2(J_\lambda)}(\lambda^{-\delta}(1+\lambda^{-3/2})t^{-\delta+C^2\eps^2}\eps).
\end{align*}
By interpolation,  this will imply that
for $C_1$ large enough we have
\begin{align*}
\|W(v)\|_{\dot{H}_v^{1-C_1\epsilon^2}(J_\lambda)}&\lesssim \lambda^{-\delta}(1+\lambda^{-2})\epsilon.
\end{align*}
By dyadic summation over $\lambda\geq 1$ and $\lambda\leq 1$,
\begin{align*}
\|e^{-\frac{v(1+\delta)}{2}}e^{|v|(1/2+\delta/4)}|D_v|^{1-C_1\epsilon^2}W(v)\|_{L_v^2}\lesssim\epsilon.
\end{align*}
The last part immediately follows from the conservation of mass.
\end{proof}

\bibliography{bib-sqg}
\bibliographystyle{plain}

\end{document}